\definecolor{darkgreen}{rgb}{0.0, 0.70, 0.0}
\def\R{\mathbb{R}}
\newcommand{\Var}[1]{\mathrm{Var}\left \{ #1\right\}}   
\newcommand{\cN}{\mathcal{N}}      
\newcommand{\ks}[1]{{\color{black}#1}}
\numberwithin{equation}{section}
\renewcommand{\thefootnote}{\arabic{footnote}}
\title{A Theoretical and Empirical Comparison of Gradient Approximations in Derivative-Free Optimization}
\author{A. S. Berahas\footnotemark[1]
   \and L. Cao\footnotemark[2]
   \and K. Choromanski\footnotemark[3]
   \and K. Scheinberg\footnotemark[4]\ \footnotemark[5]}
\begin{document}
\maketitle

\renewcommand{\thefootnote}{\fnsymbol{footnote}}
\footnotetext[1]{Department of Industrial and Operations Engineering, University of Michigan, Ann Arbor, MI, USA; E-mail: \url{albertberahas@gmail.com}}
\footnotetext[2]{Department of Industrial and Systems Engineering, Lehigh University, Bethlehem, PA, USA; E-mail: \url{lic314@lehigh.edu}}
\footnotetext[3]{Google Brain, New York, NY, USA; Email: \url{kchoro@google.com}}
\footnotetext[4]{Department of Operations Research and Information Engineering, Cornell University, Ithaca, NY, USA; E-mail: \url{katyas@cornell.edu}}
\footnotetext[5]{Corresponding author.}
\renewcommand{\thefootnote}{\arabic{footnote}}

\begin{abstract}

In this paper, we analyze several methods for approximating gradients of noisy functions using only function values. These methods include finite differences, linear interpolation, Gaussian smoothing and smoothing on a sphere. The methods differ in the number of functions sampled, the choice of the sample points, and the way in which the gradient approximations are derived. For each method, we derive bounds on the number of samples and the sampling radius which guarantee favorable convergence properties for a line search or fixed step size descent method. To this end, we use the results in \cite{berahas2019global} and show how each method can satisfy the sufficient conditions, possibly only with some sufficiently large probability at each iteration, as happens to be the case with Gaussian smoothing and smoothing on a sphere. Finally, we present numerical results evaluating the quality of the gradient approximations as well as their performance in conjunction with a line search derivative-free optimization algorithm. 

\end{abstract}

\section{Introduction}

We consider an unconstrained optimization problem of the form
\begin{align*} 
	\min_{x \in \mathbb{R}^n} \phi(x), 
\end{align*}
where $f(x)=\phi(x) + \epsilon(x)$ is computable, while $\phi(x)$ may not be. 
In other words, $f: \mathbb{R}^n\rightarrow \mathbb{R}$ is a possibly noisy approximation of a smooth function $\phi: \mathbb{R}^n\rightarrow \mathbb{R}$, and the goal is to minimize $\phi$.   
The noise in our analysis can be deterministic, stochastic or adversarial, however,  we assume that the noise is bounded uniformly, i.e., there exists a constant $\epsilon_f \geq 0$ such that $|\epsilon(x)|\leq \epsilon_f$ for all $x \in \mathbb{R}^n$. \ks {Thus, even then the noise is stochastic, we replace it with the worst case bound $\epsilon_f$, instead of treating it as a random variable. We assume that $\epsilon_f$  is {\em known}, which is a key assumption in our analysis.}
While this may seem a strong assumption, it is often satisfied in practice when $f(x)$ is the result of a computer code aimed at computing $\phi(x)$, but that has inaccuracies due to internal discretization \cite{more2009benchmarking,more2011estimating}. Another common setting in which the assumption is satisfied is when $f(x)$ is a nonsmooth function and $\phi(x)$ is its smooth approximation; see e.g., \cite{nesterov2017random, maggiar2018derivative}. \ks{In practice $\epsilon_f$ can be obtained with the cost of several function evaluations \cite{berahas2019derivative,more2011estimating}. 
It is important to note that while we assume $|\epsilon(x)|\leq \epsilon_f$ for all $x \in \mathbb{R}^n$, for simplicity, we, in fact, only use the bound on the noise at the points which are used as sample points to estimate  $\nabla \phi(x)$ for a specific $x$. Thus when the sample points are known to lie in a ball of a given radius around a fixed $x$ (as is the case for several gradient estimate methods we consider here), then our analysis can be applied if $\epsilon_f$ bounds the noise only in that given ball.}

In this paper, we do not assume that $\nabla \phi(x)$ is computable or available, but we do assume that $\nabla \phi(x)$ is Lipschitz continuous and that knowledge of an upper bound on the Lipschitz constant is available.
Such problems arise in many fields such as Derivative-Free Optimization (DFO) \cite{ConnScheVice08c,LarsMeniWild2019,brent2013algorithms,ghadimi2013stochastic,kiefer1952stochastic,spall2005introduction,berahas2019derivative}, Simulation Optimization \cite{pasupathy2018sampling,shashaani2018astro} and Machine Learning \cite{choro,fazel2018global,liu2018zeroth,TRPO,shamir2017optimal,duchi2015optimal,jamieson2012query,bollapragada2019adaptive,bogolubsky2016learning}. There have been a number of works  analyzing the case when $\epsilon(x)$ is a random function with zero mean (not necessarily bounded). The results obtained for stochastic noise, and the corresponding optimization methods, are different than those for bounded arbitrary noise. 

One common approach to optimizing functions without derivatives is to compute an estimate of the gradient $\nabla \phi(x)$ at the point $x$, denoted by $g(x)$, using (noisy) function values and then apply a gradient based method with $g(x)$. The most straightforward way to estimate $\nabla \phi(x)$  is to use forward finite differences by sampling one point near $x$ along each of the $n$ coordinates. Alternatively, one can estimate $\nabla \phi(x)$ via central finite differences where two points are sampled along each coordinate in both directions. As a generalization of the finite difference approach, $g(x)$ can be computed via linear interpolation. This approach also requires $n$ sample points near $x$, however, the location of the sample points can be chosen arbitrarily, as long as they form a set of $n$ linearly independent directions from $x$. Linear interpolation is very useful when coupled with an optimization algorithm that (potentially) reuses some of the sample function values computed at prior iterations, thus avoiding the need to compute $n+1$ new function values at each iteration.  The accuracy of the resulting gradient approximation depends on the conditioning of the matrix $Q_{\mathcal{X}}$, which is the matrix whose rows are the linearly independent directions formed by the sample points.  An extensive study of optimization methods based on interpolation gradients can be found in \cite{ConnScheVice08c}.


An alternative approach for estimating gradients using an arbitrary number of function value samples is based on random sample points.  The essence of these methods is to compute gradient estimates as a sum of estimates of directional derivatives along random (e.g., Gaussian) directions.
\ks{Using randomized directional derivative estimates was pioneered in \cite{nesterov2017random}, where these estimates are computed using only 
two function evaluations per iteration, as opposed to $n+1$ evaluations required by the finite difference method. While this appears advantageous, the consequence is that the  step size parameter has to 
be $n$ times smaller and thus, the overall iteration complexity  $n$ times larger,  than those for methods relying on accurate gradient approximations such as finite difference. 
The question then arises  - can using {\em multiple} randomized directional derivative estimates have practical or theoretical advantage over finite difference schemes?   Such methods have become popular in recent literature for policy optimization in reinforcement learning (RL)  \cite{ES,choro, choro2, rowland, fazel2018global,TRPO} as a particular case of simulation optimization. For example, in  \cite{ES} a gradient approximation is constructed 
by averaging a relatively large number directional derivative estimates along Gaussian directions \cite{nesterov2017random}. In \cite{fazel2018global}
a large number of directional derivative estimates along random unit sphere directions is used. In each case the number of these directions seems to be chosen to fit the specific method and this choice is somewhat obscure. 

Our goal is to derive bounds on the number of directional derivative estimates along random directions that are needed to establish gradient approximation that are comparable in accuracy to those obtained by a traditional finite difference schemes. What we observe is that this number is at least as large as $n$, and it is thus our conclusion that these new methods offer {\em no theoretical or practical advantage} at least in the setting fo standard optimization algorithms, such as line search. The randomized schemes may offer some advantage in some noisy optimization setting, since randomization itself may provide some algorithmic robustness, but such setting is yet to be discovered and analyzed. }

Overall, the methods we consider in this paper compute an estimate of the gradient $\nabla \phi(x)$ (denoted by $g(x)$),  as follows
 \begin{align}		\label{eq:GSG_intro}
	g(x) =  \sum_{i=1}^N \frac{f(x+\sigma u_i) - f(x)}{\sigma} \tilde u_i, 
\end{align}
 or using the central (symmetric or antithetic)  version
\begin{align}		\label{eq:cGSG_intro}
	g(x) = \sum_{i=1}^N \frac{f(x+\sigma u_i) - f(x-\sigma u_i)}{2\sigma} \tilde u_i,
\end{align}
where $\{u_i: i=1, \ldots, N\}$\footnote{Throughout the paper, $N$ denotes the \emph{size of the sample set} $\{u_i: i=1, \ldots, N\}$. Note that for the central versions of the gradient approximations, the number of \emph{sampled functions} is equal to $2N$.} is a \emph{set of directions} that depend on the method, $\tilde u_i$ depends on $u_i$, and  $\sigma$ is the \emph{sampling radius}. 
In particular, for the finite difference methods $N=n$ and  $u_i=\tilde u_i=e_i$, where $e_i$ denotes the $i$-th column of the identity matrix. For interpolation, $N=n$, $\{u_i: i=1, \ldots, N\}$ is a set of arbitrary linearly independent vectors with $\|u_i\|\leq 1$\footnote{The norms used in this paper are Euclidean norms.} for all $i$, and $\tilde u_i$ are the columns of $Q_\mathcal{X}^{-1}$, where the $i$th row of $Q_\mathcal{X} \in \mathbb{R}^{N \times n}$ is $u_i$. 
A special case of linear interpolation has been explored in \cite{choro,choro2, rowland} where the $u_i$'s are random orthogonal directions; this approach can also be viewed as rotated finite differences.
In the case of Gaussian smoothing, the directions $u_i$ are random directions from a standard Gaussian distribution and $\tilde u_i=\frac{1}{N}u_i$. 
Finally, a variant of this method that selects the directions $u_i$ from a  uniform distribution on a unit sphere, where $\tilde u_i= \frac{n}{N} u_i$, has been explored in \cite{fazel2018global,flaxman2005online}. As is clear from \eqref{eq:cGSG_intro}, antithetic gradient approximations require $2N$ function evaluations.
Details about these methods are given in Section~\ref{sec:grad_approx}.


We are motivated by recent empirical use of these methods in the RL literature. In \cite{ES}, the authors showed that the Gaussian smoothing approach is an efficient way to compute gradient estimates when $N\sim n$. In follow-up works \cite{choro,choro2, rowland} it was shown empirically that better gradient estimates can be obtained for the same optimization problems by using interpolation with orthogonal directions.  While the numerical results in these works confirmed the feasibility of  use of \eqref{eq:GSG_intro} and \eqref{eq:cGSG_intro} for various RL benchmark sets  and different choice of  directions, there is no theoretical analysis, neither comparing the accuracy of resulting gradient estimates, nor analyzing the connection between such accuracy and downstream optimization gains. 
To the best of our best knowledge, there has been no systematic analysis of the accuracy of the (stochastic) gradient estimates used in the DFO literature (such as Gaussian smoothing and smoothing on a unit sphere) specifically in conjunction with requirements of obtaining descent directions. 


In this paper, we develop theoretical bounds on the gradient approximation errors $\|g(x)-\nabla \phi(x)\|$ for all aforementioned gradient estimation methods and show their dependence on the number of samples. Another key quantity we consider is the radius of sampling $\sigma$. In the absence of noise, $\sigma$ can be chosen arbitrarily small, however, when noise is present, small values of $\sigma$ can lead to large inaccuracies in the gradient estimates. We derive the values for $\sigma$ which ensure that the gradient estimates are sufficiently accurate and thus can be used in conjunction with efficient gradient based methods. 


A number of works have used smoothing techniques for gradient approximations  within   stochastic gradient descent schemes with a {\em fixed} step size parameter or a {\em predetermined} sequence of step size parameters; see e.g., \cite{nesterov2017random, ES, fazel2018global,flaxman2005online,duchi2015optimal,dvurechensky2020accelerated,bayandina2017gradient}. The complexity results derived in these papers depend on the assumptions made on the underlying functions as well as the algorithm employed. In \cite{nesterov2017random} the objective function is assumed to be deterministic, and the convergence rate that is obtained for a gradient method with gradients approximated via Gaussian smoothing is the same (in terms of dependence on the dimension $n$ and the iteration count) as for deterministic gradient descent. Notably, \cite{duchi2015optimal} establishes convergence rates with better dependence on the dimension, but worse dependence on the iteration count. This perhaps is not surprising since the objective function is assumed to be stochastic in \cite{duchi2015optimal}. 

In this paper we address functions with bounded noise (so more general than \cite{nesterov2017random} but more restrictive than  \cite{duchi2015optimal}).   
 We  provide a rigorous quantitative analysis of the error between the various gradient estimates and the true gradient. The resulting error bounds presented in this paper can be used to establish convergence results for different variants of (stochastic) gradient methods. The deterministic bounds (finite differences and interpolation) can be used to establish convergence of a gradient descent scheme with fixed or adaptive step sizes. The resulting error bounds for the randomized methods can be used to establish convergence for simple stochastic gradient-type methods or adaptive methods such as the line search method studied in  \cite{berahas2019global}.
Our results show that in order to obtain gradient accuracy comparable to interpolation (or more generally methods that use orthogonal directions), smoothing methods  with  Gaussian or unit sphere  directions (scaled or not scaled) can require significantly more samples.  With both theoretical and empirical evidence, we argue that while  smoothing methods (Gaussian or unit sphere) can be applied with $N \ll n$, the resulting estimates generally have lower accuracy (and thus can result in slow convergence when employed within an optimization algorithm) than the estimates computed via linear interpolation.

\paragraph{Organization} The paper is organized as follows. In the remainder of this section, we introduce the assumptions we make for our analysis, and then present the main results of the paper. We define and derive theoretical results for the gradient approximation methods in Section~\ref{sec:grad_approx}. We present a numerical comparison of the gradient approximations and illustrate the performance of a line search DFO algorithm that employs these gradient approximations in Section~\ref{sec:num}. Finally, in Section \ref{sec:finrem}, we make some concluding remarks and discuss avenues for future research.

\subsection{Assumptions}

Throughout the paper we assume that the noise in the function evaluations $\epsilon(x)$ is bounded for all $x \in \mathbb{R}^n$, and that $\phi$ is Lipschitz smooth. 
\begin{assumption}	\label{assum:bounded_noise} 
\textbf{(Boundedness of Noise in the Function)} There is a constant $\epsilon_f \geq 0$ such that $| f(x) - \phi(x)| = |\epsilon(x)| \leq \epsilon_f$ for all $x \in \mathbb{R}^n$.
\end{assumption}
\begin{assumption}	\label{assum:lip_cont} 
\textbf{(Lipschitz continuity of the gradients of $\pmb{\phi}$)} 
The function $\phi$ is continuously differentiable, and the gradient of $\phi$ is $L$-Lipschitz continuous for all $x \in \mathbb{R}^n$. 
\end{assumption}

In some cases, to establish better approximations of the gradient, we will assume that  $\phi$ has  Lipschitz continuous Hessians.  
\begin{assumption}	\label{assum:lip_cont_hess} 
\textbf{(Lipschitz continuity of the Hessian of $\pmb{\phi}$)} 
The function $\phi$ is twice continuously differentiable, and the Hessian of $\phi$ is $M$-Lipschitz continuous for all $x \in \mathbb{R}^n$. 
\end{assumption}


\subsection{Summary of Results}
We begin by stating a condition that is often used in the analysis of first order methods with inexact gradient computations:
\begin{align}	\label{eq:theta_cond}
	\| g(x) - \nabla \phi(x) \| \leq \theta \|  \nabla \phi(x) \|,  
\end{align}
for some $\theta \in [0,1)$. This condition, referred to as the \emph{norm condition}, was introduced and studied in \cite{carter1991global,polyakintroduction}. In \cite{berahas2019global} the authors establish expected complexity bounds for a generic line search algorithm that uses gradient approximations in lieu of the true gradient, under the condition that the gradient estimate $g(x)$ satisfies \eqref{eq:theta_cond} for sufficiently small $\theta$ and with sufficiently high probability $1-\delta$. Note, this condition implies that $g(x)$ is a descent direction for the function $\phi$. Clearly, unless we know $\|  \nabla \phi(x) \|$, condition \eqref{eq:theta_cond} may be hard or impossible to verify or guarantee. There is significant amount of work that attempts to circumvent this difficulty; see e.g., \cite{byrd2012sample,cartis2018global,paquette2018stochastic}. In \cite{byrd2012sample} a practical approach to estimate $\|  \nabla \phi(x_k) \|$ is proposed  and  used to ensure some approximation of \eqref{eq:theta_cond} holds. In \cite{cartis2018global,paquette2018stochastic} the condition \eqref{eq:theta_cond} is replaced by 
\begin{align*}
	\| g(x) - \nabla \phi(x) \| \leq \kappa \alpha_k   \|  g(x) \|,  
\end{align*}
for some $\kappa > 0 $, and convergence rate analyses are derived for a line search method that has access to deterministic function values in \cite{cartis2018global} and stochastic function values (with additional assumptions) in \cite{paquette2018stochastic}. However, for the methods studied in this paper, condition \eqref{eq:theta_cond} turns out to be achievable. We establish conditions under which  \eqref{eq:theta_cond}
 holds either deterministically or with sufficiently high probability.

Given a point $x$, all methods compute $g(x)$  via either \eqref{eq:GSG_intro} or \eqref{eq:cGSG_intro}. The methods vary in their selection of the \emph{size of the sample set} $N$, the set  $\{u_i: i=1, \ldots, N\}$ and the corresponding set $\{\tilde u_i: i=1, \ldots, N\}$, and the \emph{sampling radius} $\sigma$.  Here, upfront, we present a simplified summary of the conditions on $N$, $\sigma$ and $\nabla \phi(x)$ for each method that we consider in this paper to guarantee condition \eqref{eq:theta_cond}; see Table \ref{tbl:bounds1}. For more detailed results  see Section \ref{sec:summary}, Table 
 \ref{tbl:bounds_full2}. Note that for the smoothing methods \eqref{eq:theta_cond} holds with probability $1-\delta$ and the number of samples depends on $\delta$. Moreover, the bounds on $N$ for the smoothing methods are a simplification of the more detailed bounds derived in the paper and apply when $n\geq4$, while for smaller $n$ some of the constants are larger. We should note that the constants in the bound on $N$ are smaller for larger $n$.

 \begin{table}[h!]

\caption{ Simplified conditions under assumption that $n\geq4$. Bounds on $N$, $\sigma$ and $\nabla \phi(x)$ that ensure $\| g(x) - \nabla \phi(x) \| \leq \theta\|\nabla \phi(x)\|$ ($\pmb{^*}$ denotes result is with probability $1-\delta$).}
\small
\label{tbl:bounds1}
\centering
\begin{tabular}{lccc}
\toprule
\begin{tabular}[l]{@{}l@{}}\textbf{Gradient} \\  \textbf{Approximation}\end{tabular} &
 \textbf{$\pmb{N}$} &
 \textbf{$\pmb{\sigma}$} &  \textbf{$\pmb{\| \nabla \phi(x) \|}$} \\  \midrule

\begin{tabular}[l]{@{}l@{}}\textbf{Forward Finite} \\  \textbf{Differences}\end{tabular} & $n$ & $ 2\sqrt{\frac{\epsilon_f}{L}}$ & $\frac{2\sqrt{nL\epsilon_f}}{\theta}$ \\ \hdashline
 \begin{tabular}[l]{@{}l@{}}\textbf{Central Finite} \\  \textbf{Differences}\end{tabular} & $n$ & $  \sqrt[3]{\frac{6\epsilon_f}{M}}$ & $\frac{2  \sqrt[3]{ n^{3/2}M \epsilon_f^2}}{ \theta}$  \\ \hdashline
 \begin{tabular}[l]{@{}l@{}}\textbf{Linear} \\  \textbf{Interpolation}\end{tabular} & $n$ & $2 \sqrt{\frac{\epsilon_f}{L}}$ & $\frac{2\| Q_\mathcal{X}^{-1}\|\sqrt{nL\epsilon_f}}{\theta}$ \\ \hdashline
 \begin{tabular}[l]{@{}l@{}}\textbf{Gaussian Smoothed} \\  \textbf{Gradients}$\pmb{^*}$\end{tabular} & $\frac{36n }{\delta  \theta^2 } + \frac{3n+24}{16\delta }  $ & $ \sqrt{\frac{\epsilon_f}{L}}$ & $\frac{6\sqrt{n^2L\epsilon_f}}{\theta}$ \\ \hdashline
 \begin{tabular}[l]{@{}l@{}}\textbf{Centered Gaussian} \\  \textbf{Smoothed Gradients}$\pmb{^*}$\end{tabular} & $\frac{36n  }{\delta  \theta^2 } +
	 \frac{n+15}{48\delta  }  $ & $ \sqrt[3]{\frac{\epsilon_f}{\sqrt{n}M}}$ & $\frac{12\sqrt[3]{n^{7/2}M\epsilon_f^2}}{\theta}$ \\ \hdashline
	  \begin{tabular}[l]{@{}l@{}}\textbf{Sphere Smoothed} \\  \textbf{Gradients}$\pmb{^*}$\end{tabular} & $ \left[\frac{24n }{\theta^2 } + \frac{8n}{3 \theta} + \frac{3n}{8} + \frac{\sqrt{n}}{3} +  \frac{13}{6}\right] \log \frac{n+1}{\delta}$ & $ \sqrt{\frac{n\epsilon_f}{L}}$ & $\frac{4\sqrt{n^2L\epsilon_f}}{\theta}$ \\ \hdashline 
 \begin{tabular}[l]{@{}l@{}}\textbf{Centered Sphere} \\  \textbf{Smoothed Gradients}$\pmb{^*}$\end{tabular} & $ \left[\frac{24n }{\theta^2 } + \frac{8n}{3 \theta} + \frac{n}{24} + \frac{\sqrt{n}}{9} +  \frac{17}{24}\right] \log \frac{n+1}{\delta}$ & $ \sqrt[3]{\frac{n\epsilon_f}{M}}$ & $\frac{4\sqrt[3]{n^{7/2}M\epsilon_f^2}}{\theta}$ \\

 \bottomrule
\end{tabular}
 \end{table}
The bounds $N$ for all methods in Table \ref{tbl:bounds1} are the upper bounds, in the sense that they give the value of $N$ that guarantees the desired gradient estimate accuracy (with high probability). Clearly for deterministic methods these bounds are also the lower bounds, that is, 
no gradient accuracy can be guaranteed (in general) with a smaller value of $N$. For the smoothing methods, deriving accurate lower bound on $N$ is nontrivial. We show, however, that this lower bound is linear in $n$ and via numerical simulation confirm that the constants in the bound are significantly larger than those for deterministic methods, such as finite differences. This suggests that deterministic methods may be more efficient, at least in the setting considered in this paper, when accurate gradient estimates are desired.   The bounds on the sampling radius are comparable for the smoothing and deterministic methods, as we will discuss in detail later in the paper. Finally, our numerical results support our theoretical observations. 



\section{Gradient Approximations and Sampling}	\label{sec:grad_approx}

In this section, we analyze several existing methods for constructing gradient approximations using only noisy function information. We establish conditions under which the gradient approximations constructed via these methods satisfy the bound \eqref{eq:theta_cond} for any given $\theta\in[0,1)$. 

The common feature amongst these methods is that they construct  approximations $g(x)$ of the gradient $\nabla \phi(x)$ using (possibly noisy) function values $f(y)$ for $y\in \mathcal{X}$, where  $\mathcal{X}$ is a \emph{sample set} centered around $x$.  These methods differ in the way they select $\mathcal{X}$ and the manner in which the function values $f(y)$, on all sample points $y\in \mathcal{X}$, are used to construct $g(x)$. The methods have different costs in terms of number of evaluations of $f$, as well as other associated computations. Our goal is to compare these costs when computing gradient estimates that satisfy  \eqref{eq:theta_cond} for some  $\theta\in [0,1)$. For each method, we derive bounds on the number of samples and the sampling radius which guarantee \eqref{eq:theta_cond}, the sufficient  condition for convergence of the line search method in \cite{berahas2019global}.

\subsection{Gradient Estimation via Standard Finite Differences}	\label{sec:fd}

The first method we analyze is the  standard finite difference method. The forward finite difference (FFD) approximation to the gradient of $\phi$ at $x \in \mathbb{R}^n$ is computed using the sample set $\mathcal{X} = \{ x + \sigma e_i \}_{i=1}^n\cup \{x\}$, where $\sigma>0$ is the finite difference interval and $e_i \in \R^n$ is the $i$-th column of the identity matrix,  as follows
\begin{align*}
	[g({x})]_i = \frac{f({x} + \sigma e_i) - f({x})}{\sigma}, \ \ \text{for} \ \ i=1,\dots,n.
\end{align*}
 Alternatively, gradient approximations can be computed using central finite differences (CFD) based on the sample set $\mathcal{X} = \{ x + \sigma e_i \}_{i=1}^n\cup  \{ x - \sigma e_i \}_{i=1}^n$
, as
\begin{align*}
	[g({x})]_i = \frac{f({x} + \sigma e_i) - f({x} - \sigma e_i)}{2\sigma}, \ \ \text{for} \ \ i=1,\dots,n.
\end{align*}
 FFD and CFD approximations require $n$ and $2n$ functions evaluations, respectively. CFD approximations tends to be 
 more accurate and stable, as we show below. 
 
We begin by stating two standard  gradient approximation bounds, i.e., the error between the finite difference approximation to the gradient and the gradient of $\phi$.
\begin{theorem}\label{thm:FFDbound} Under Assumptions \ref{assum:bounded_noise} and \ref{assum:lip_cont}, 
 let $g(x)$ denote the forward finite difference (FFD) approximation to the gradient $\nabla \phi(x)$. Then, for all $x \in \mathbb{R}^n$,
\begin{align*}
	\|g({x}) - \nabla \phi({x})\| \leq \frac{\sqrt{n} L \sigma}{2} + \frac{2\sqrt{n} \epsilon_f}{\sigma}.
\end{align*}
\end{theorem}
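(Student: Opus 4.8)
The plan is to estimate the error coordinate-by-coordinate and then aggregate via the Euclidean norm. Fix $x \in \mathbb{R}^n$ and a coordinate $i$. The $i$-th component of the error is
\[
[g(x) - \nabla\phi(x)]_i = \frac{f(x+\sigma e_i) - f(x)}{\sigma} - [\nabla\phi(x)]_i,
\]
and the idea is to split this into a ``smooth'' part (the error of the exact finite difference of $\phi$) and a ``noise'' part. Writing $f = \phi + \epsilon$, I would bound the noise part immediately using \asref{bounded_noise}: the contribution of $\epsilon$ to the quotient is at most $\frac{|\epsilon(x+\sigma e_i)| + |\epsilon(x)|}{\sigma} \le \frac{2\epsilon_f}{\sigma}$.

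For the smooth part, I would invoke \asref{lip_cont}. The standard Taylor-with-integral-remainder (or mean value) argument gives
\[
\left| \phi(x+\sigma e_i) - \phi(x) - \sigma [\nabla\phi(x)]_i \right| \le \frac{L\sigma^2}{2},
\]
since $\nabla\phi$ is $L$-Lipschitz; dividing by $\sigma$ yields a per-coordinate smooth-part bound of $\frac{L\sigma}{2}$. Combining the two pieces, $\bigl|[g(x) - \nabla\phi(x)]_i\bigr| \le \frac{L\sigma}{2} + \frac{2\epsilon_f}{\sigma}$ for every $i$.

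Finally I would pass to the vector norm: since each of the $n$ coordinates of the error vector is bounded in absolute value by $\frac{L\sigma}{2} + \frac{2\epsilon_f}{\sigma}$, we get $\|g(x) - \nabla\phi(x)\| \le \sqrt{n}\left(\frac{L\sigma}{2} + \frac{2\epsilon_f}{\sigma}\right) = \frac{\sqrt{n}L\sigma}{2} + \frac{2\sqrt{n}\,\epsilon_f}{\sigma}$, which is the claimed bound. There is no real obstacle here — the only point requiring minor care is the Taylor/Lipschitz step, where one should use the integral form $\phi(x+\sigma e_i) - \phi(x) = \int_0^1 \nabla\phi(x+t\sigma e_i)^\top (\sigma e_i)\,dt$ and subtract $\sigma[\nabla\phi(x)]_i = \int_0^1 \nabla\phi(x)^\top(\sigma e_i)\,dt$ to expose the factor $\|\nabla\phi(x+t\sigma e_i) - \nabla\phi(x)\| \le L t\sigma$ inside the integral, giving the $\frac{L\sigma^2}{2}$ after integrating $t$ from $0$ to $1$.
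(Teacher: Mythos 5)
Your proof is correct and follows essentially the same argument the paper uses: the paper proves the general linear interpolation bound (Theorem \ref{thm:bnd_linmod}) via the same mean-value/integral-remainder estimate $\sigma|(g(x)-\nabla\phi(x))^\intercal u_i|\le \tfrac{L\sigma^2}{2}+2\epsilon_f$ and recovers the FFD case by taking $Q_\mathcal{X}=I$, which is exactly your coordinate-wise bound followed by the $\sqrt{n}$ aggregation. No gaps.
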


\begin{theorem}\label{thm:CFDbound} Under Assumptions \ref{assum:bounded_noise} and \ref{assum:lip_cont_hess}, let $g(x)$ denote the central finite difference (CFD) approximation to the gradient $\nabla \phi(x)$. Then, for all $x \in \mathbb{R}^n$,
\begin{align*}
	\|g({x}) - \nabla \phi({x})\| \leq \frac{\sqrt{n} M \sigma^2}{6} + \frac{\sqrt{n} \epsilon_f}{\sigma}.
\end{align*}
\end{theorem}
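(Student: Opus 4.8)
The plan is to bound each component $[g(x)]_i - [\nabla\phi(x)]_i$ separately and then assemble the Euclidean norm over the $n$ coordinates. Fix a coordinate $i$ and split the central difference quotient into its ``true'' part and its ``noise'' part:
\[
[g(x)]_i - [\nabla\phi(x)]_i
= \left( \frac{\phi(x+\sigma e_i) - \phi(x-\sigma e_i)}{2\sigma} - [\nabla\phi(x)]_i \right)
+ \frac{\epsilon(x+\sigma e_i) - \epsilon(x-\sigma e_i)}{2\sigma}.
\]
The second term is immediately bounded in absolute value by $2\epsilon_f/(2\sigma) = \epsilon_f/\sigma$ using Assumption~\ref{assum:bounded_noise}. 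For the first term I would invoke Assumption~\ref{assum:lip_cont_hess}: by Taylor's theorem with integral remainder (or the standard second-order Taylor estimate with Lipschitz Hessian),
\[
\phi(x \pm \sigma e_i) = \phi(x) \pm \sigma [\nabla\phi(x)]_i + \tfrac{\sigma^2}{2} e_i^\top \nabla^2\phi(x) e_i + R_\pm,
\qquad |R_\pm| \le \tfrac{M}{6}\sigma^3 .
\]
Subtracting, the zeroth- and second-order terms cancel, the first-order terms add, and we are left with $\phi(x+\sigma e_i) - \phi(x-\sigma e_i) = 2\sigma [\nabla\phi(x)]_i + (R_+ - R_-)$, so the first term is bounded by $|R_+ - R_-|/(2\sigma) \le (M\sigma^3/3)/(2\sigma) = M\sigma^2/6$.

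Combining the two pieces gives $\bigl|[g(x)]_i - [\nabla\phi(x)]_i\bigr| \le \frac{M\sigma^2}{6} + \frac{\epsilon_f}{\sigma}$ for each $i$. Summing the squares over $i=1,\dots,n$ and taking the square root yields $\|g(x) - \nabla\phi(x)\| \le \sqrt{n}\left( \frac{M\sigma^2}{6} + \frac{\epsilon_f}{\sigma} \right)$, which is the claimed bound. (Alternatively, one can phrase the component estimate more carefully: bound the true part by $\frac{M}{6}\sigma^2$ using the mean value form of the remainder applied to the symmetric difference directly, which is cleaner since the quadratic term cancels exactly; the constant $1/6$ comes from the $1/6$ in the third-order Taylor remainder.)

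The only mildly delicate point is getting the constant $M/6$ rather than a worse constant: this requires using the \emph{exact cancellation} of the quadratic term in the symmetric difference and applying the Lipschitz-Hessian bound to the third-order remainder, rather than naively bounding $\phi(x\pm\sigma e_i)$ against its first-order Taylor polynomial (which would only use Assumption~\ref{assum:lip_cont} and give an $O(\sigma)$ error). So the ``hard part'' is really just being disciplined about which Taylor expansion order to truncate at; everything else is routine. The proof is essentially identical in structure to that of Theorem~\ref{thm:FFDbound}, with the forward difference's first-order remainder ($\le \frac{L}{2}\sigma^2$ per coordinate, hence $\frac{\sqrt n L \sigma}{2}$) replaced by the symmetric difference's second-order cancellation plus third-order remainder.
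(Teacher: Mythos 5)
Your proof is correct: the per-coordinate split into the Taylor part (with the exact cancellation of the quadratic term and the $\tfrac{M}{6}\sigma^3$ third-order remainder from the Lipschitz-Hessian Taylor bound) and the noise part (bounded by $\epsilon_f/\sigma$), followed by the $\sqrt{n}$ assembly, yields exactly the stated constant. The paper states this theorem without proof as a ``standard'' bound, and your argument is precisely the standard derivation it is relying on.
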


It is apparent from Theorems  \ref{thm:FFDbound} and \ref{thm:CFDbound} that the finite difference interval $\sigma>0$ should be chosen not to be too small or too large in order to control the bound on $\|g({x}) - \nabla \phi({x})\|$. The precise range of acceptable values of $\sigma$ depends on the Lipschitz constant $L$ of $\nabla \phi(x)$, and the level of noise $\epsilon_f$.  We derive expressions for $\sigma$ based on Theorems \ref{thm:FFDbound} and \ref{thm:CFDbound}, and then discuss the implications of not knowing $L$ and $\epsilon_f$ precisely. 

First we consider  the FFD case and thus Theorem \ref{thm:FFDbound}. \ks{In order for the estimate of  $\nabla \phi(x)$ computed by FFD to satisfy 
 \eqref{eq:theta_cond} for some given  $x\in\mathbb{R}^n$ we chose  $\sigma$ such that the following holds }
\begin{align}\label{eq:FFDbndsig1}
 	\frac{\sqrt{n} L \sigma}{2} + \frac{2\sqrt{n} \epsilon_f}{\sigma}\leq  \theta\|\nabla \phi(x)\|, 
 \end{align}
  which can be written as a quadratic inequality,
\begin{align*}
	\frac{\sqrt{n} L}{2} \sigma^2 - \theta\|\nabla \phi(x)\| \sigma + 2\sqrt{n} \epsilon_f &\le 0.
\end{align*}
The case when $L=0$, and known, is not interesting in our context, because then the function is linear and gradient approximation should be 
performed outside of any optimization scheme. 
Hence, we assume that (the upper bound of) the Lipschitz constant of $\nabla \phi(x)$, $L$, is strictly positive. Then,  the interval of $\sigma$ values that satisfy the quadratic inequality is
\begin{align} \label{eq:sigma_bound_quad}
	\frac{\theta \|\nabla \phi(x)\| - \sqrt{\theta^2 \|\nabla \phi(x)\|^2 - 4nL\epsilon_f}}{\sqrt{n} L} \le \sigma \le \frac{\theta \|\nabla \phi(x)\| + \sqrt{\theta^2 \|\nabla \phi(x)\|^2 - 4nL\epsilon_f}}{\sqrt{n} L}.  
\end{align}
This interval is nonempty when $\theta^2 \|\nabla \phi(x)\|^2 \ge 4nL\epsilon_f$, which constitutes to a condition on $ \|\nabla \phi(x)\|$, with respect to $L$ and $\epsilon_f$, for which FFD,  with the appropriate choice of $\sigma$,  can satisfy \eqref{eq:theta_cond}. When $\theta^2 \|\nabla \phi(x)\|^2 \ge 4nL\epsilon_f$, any choice of $\sigma$ satisfying  \eqref{eq:sigma_bound_quad} works, however, since we do not know $\|\nabla\phi(x)\|$, we set $\sigma$ to the known value, 
\begin{equation}\label{eq:sigma_set}
\sigma= 2\sqrt{ \frac{\epsilon_f}{L}},
\end{equation}
which minimizes the left hand side of \eqref{eq:FFDbndsig1} and thus satisfies \eqref{eq:sigma_bound_quad}.

When $ \|\nabla \phi(x)\|$ falls below $\frac{2\sqrt{nL\epsilon_f}}{\theta}$, finite difference approximations to the gradient can no longer ensure sufficiently accurate approximations, and any optimization process reliant on these approximations may fail to progress. Thus, the  implication of not knowing $\epsilon_f$ and $L$ precisely, but replacing them with overestimates when defining $\sigma$, results in earlier stalling of an optimization algorithm based on FFD (and all other gradient estimates schemes that we will discuss in this manuscript). This observation agrees with related results in \cite{berahas2019global}, where it is shown that a line search algorithm for noisy objective functions,  based on gradient approximations that satisfy \eqref{eq:theta_cond}, enjoys fast convergence rates until it reaches  a neighborhood of optimality dictated by the estimate $\epsilon_f$.

Applying the same logic as above to Theorem \ref{thm:CFDbound}, in order to ensure that \eqref{eq:theta_cond} holds, we require
\begin{align}\label{eq:CFDbndsig1}
	\frac{\sqrt{n} M \sigma^2}{6} + \frac{\sqrt{n} \epsilon_f}{\sigma}\leq  \theta\|\nabla \phi(x)\|,
\end{align}
which can be written as a cubic inequality,
\begin{align*}
 \frac{\sqrt{n}M}{6} \sigma^3 - \theta \|\nabla\phi(x)\| \sigma + \sqrt{n} \epsilon_f \le 0. 
\end{align*}
The cubic left-hand side has three roots. The first root is a negative number, while the second and third roots are positive real numbers if 
\begin{align*}
	 \| \nabla \phi(x)\| \ge \frac{\sqrt{n} \sqrt[3]{9M\epsilon_f^2}}{2 \theta}, 
\end{align*}
which constitutes to a condition on $\|\nabla \phi(x)\|$  for which CFD can deliver a gradient estimate satisfying \eqref{eq:theta_cond} if $\sigma$ is chosen as a value inside the interval between the second and third roots.  
Choosing $\sigma$ to satify
\begin{align*}
	 \sigma = \sqrt[3]{\frac{3\epsilon_f}{M}}
\end{align*}
minimizes the left-hand side of \eqref{eq:CFDbndsig1} in the interval between the second and the third root. 
\subsection{Gradient Estimation via Linear Interpolation}	\label{sec:lin_mod}

We now consider a more general method for approximating gradients using polynomial interpolation that has become a popular choice for model based trust region methods in the DFO setting \cite{ConnScheToin97,ConnScheVice08c,ConnScheToin98,Powe74,Powe06,wild2008orbit,maggiar2018derivative}. These methods construct surrogate models of the objective function  using interpolation (or regression). While typically, in the DFO setting, interpolation is used to construct quadratic models of the objective function around $x \in \mathbb{R}^n$ of the form
\begin{align}		\label{eq:quad_mod}
	m(y)=f(x)+g(x)^\intercal (y-x)+\frac{1}{2}(y-x)^\intercal H(x)(y-x),
\end{align}
where $f \in \mathbb{R}$ and $g \in \mathbb{R}^n$, or $H \in \mathbb{R}^{n\times n}$, in this paper we focus on the simplest case of linear models, 
\begin{align}		\label{eq:lin_mod}
	m(y)=f(x)+g(x)^\intercal (y-x),
\end{align}
as the focus of this paper is on line search methods, whereas the use of \eqref{eq:quad_mod} requires a trust region approach due to the general nonconvexity of $m(y)$ \cite{ConnScheVice08c}. 


Let us consider the following sample set  $\mathcal{X}=\{x+\sigma u_1, x+\sigma u_2, \ldots, x+\sigma u_n\}$ for some $\sigma >0$. In other words, we have $n$ directions denoted by $u_i \in \mathbb{R}^n$ and we sample $f$ along those directions, around $x$, using a sampling radius of size $\sigma$. We assume $f(x)$ is known (function value at $x$).  Let $F_\mathcal{X} \in \mathbb{R}^n$ be a vector whose entries are
$f(x+\sigma u_i) - f(x)$, for $i=1\dots n$, and let $Q_\mathcal{X} \in \mathbb{R}^{n \times n}$ define a matrix whose rows are given by $u_i$ for $i=1\dots n$. The model in \eqref{eq:lin_mod} is constructed to satisfy  the interpolation conditions, 
\begin{align*}
	f(x+\sigma u_i)=m(x+\sigma u_i), \quad \forall i=1, \ldots, n,\\
\end{align*}
which can be written as
\begin{align} \label{eq:main_system}
	\sigma Q_\mathcal{X} g=F_\mathcal{X}. 
\end{align}

If the matrix $Q_\mathcal{X}$ is nonsingular, then $m(y) = f(x) + g(x)^\intercal (y-x)$, with $g(x) = \frac{1}{\sigma}Q_\mathcal{X}^{-1}F_\mathcal{X}$, is a linear interpolation model of $f(y)$ on the sample set $\mathcal{X}$. When $Q_\mathcal{X}$ is the identity matrix, then we recover standard forward finite difference gradient estimation. In the specific case when $Q_\mathcal{X}$ is
orthonormal, then $Q_\mathcal{X}^{-1} = Q_\mathcal{X}^\intercal$, thus $g(x)$ is written as
\begin{align*}
	g(x) = \sum_{i=1}^n \frac{f(x+\sigma u_i) - f(x)}{\sigma}u_i.
\end{align*}


Next we derive a bound on $\|g(x)-\nabla \phi(x)\|$. This result is an extension of the results presented in \cite{ConnScheVice08c,conn2008geometry} that accounts for the noise in the function evaluations. 

\begin{theorem} \label{thm:bnd_linmod}Suppose that Assumptions \ref{assum:bounded_noise} and \ref{assum:lip_cont} hold. Let $\mathcal{X}=\{x+\sigma u_1,\dots,x+\sigma u_n \}$ be a set of interpolation points such that $\max_{1\leq i \leq n}\|u_i\|\leq 1$  and  
$Q_\mathcal{X}$ be nonsingular. Then, for all $x \in \mathbb{R}^n$,
\begin{align*}
	\|g({x}) - \nabla \phi({x})\| \leq \frac{\| Q_\mathcal{X}^{-1}\|_2\sqrt{n}L \sigma }{2} + \frac{2 \| Q_\mathcal{X}^{-1}\|_2\sqrt{n} \epsilon_f}{\sigma}.
\end{align*}
\end{theorem}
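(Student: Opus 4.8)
The plan is to start from the exact expression $g(x) = \frac{1}{\sigma} Q_\mathcal{X}^{-1} F_\mathcal{X}$ and compare it against $\nabla\phi(x)$ by inserting the "noiseless, linearized" version of $F_\mathcal{X}$. Concretely, I would write $F_\mathcal{X} = \sigma Q_\mathcal{X} \nabla\phi(x) + r$, where the $i$-th component of the residual $r$ is $r_i = \big(f(x+\sigma u_i) - f(x)\big) - \sigma u_i^\intercal \nabla\phi(x)$. Then $g(x) - \nabla\phi(x) = \frac{1}{\sigma} Q_\mathcal{X}^{-1} r$, so by submultiplicativity $\|g(x)-\nabla\phi(x)\| \le \frac{1}{\sigma}\|Q_\mathcal{X}^{-1}\|_2 \, \|r\|$, and everything reduces to bounding $\|r\|$.

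Next I would bound each $|r_i|$ by splitting the residual into a smoothness error and a noise error. Using Assumption~\ref{assum:bounded_noise}, replacing $f$ by $\phi$ at both $x+\sigma u_i$ and $x$ costs at most $2\epsilon_f$. For the remaining term $\phi(x+\sigma u_i) - \phi(x) - \sigma u_i^\intercal\nabla\phi(x)$, the standard Lipschitz-gradient estimate (Taylor with integral remainder, or the mean value form) gives a bound of $\frac{L}{2}\|\sigma u_i\|^2 = \frac{L\sigma^2}{2}\|u_i\|^2 \le \frac{L\sigma^2}{2}$, since $\|u_i\|\le 1$. Hence $|r_i| \le \frac{L\sigma^2}{2} + 2\epsilon_f$ for each $i$, and therefore $\|r\| = \big(\sum_{i=1}^n r_i^2\big)^{1/2} \le \sqrt{n}\big(\frac{L\sigma^2}{2} + 2\epsilon_f\big)$.

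Combining the two displays yields
\begin{align*}
	\|g(x) - \nabla\phi(x)\| \;\le\; \frac{1}{\sigma}\|Q_\mathcal{X}^{-1}\|_2 \cdot \sqrt{n}\left(\frac{L\sigma^2}{2} + 2\epsilon_f\right) \;=\; \frac{\|Q_\mathcal{X}^{-1}\|_2\sqrt{n}L\sigma}{2} + \frac{2\|Q_\mathcal{X}^{-1}\|_2\sqrt{n}\epsilon_f}{\sigma},
\end{align*}
which is the claimed bound. I expect no serious obstacle here; the only mild subtlety is getting the constant $\tfrac12$ in the smoothness term correct, which just requires the Lipschitz-gradient Taylor estimate $|\phi(y) - \phi(x) - \nabla\phi(x)^\intercal(y-x)| \le \frac{L}{2}\|y-x\|^2$ rather than a cruder first-order bound, and being careful that the per-coordinate bound on $|r_i|$ assembles into the $\sqrt{n}$ factor via the Euclidean norm. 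Note also that this argument specializes to Theorem~\ref{thm:FFDbound} when $Q_\mathcal{X} = I$ (so $\|Q_\mathcal{X}^{-1}\|_2 = 1$), which is a useful consistency check.
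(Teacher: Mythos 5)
Your proposal is correct and follows essentially the same route as the paper: your residual $r_i=\bigl(f(x+\sigma u_i)-f(x)\bigr)-\sigma u_i^\intercal\nabla\phi(x)$ is exactly the quantity $\sigma\,(g(x)-\nabla\phi(x))^\intercal u_i$ that the paper bounds via the interpolation conditions and the integral form of the mean value theorem, and both arguments then pull out $\|Q_\mathcal{X}^{-1}\|_2$ from $\|Q_\mathcal{X}(g(x)-\nabla\phi(x))\|=\frac{1}{\sigma}\|r\|$ to conclude. The constants and the $\sqrt{n}$ assembly are handled identically, so there is nothing to add.
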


\begin{proof}
From the interpolation conditions and the mean value theorem, $\forall i=1, \ldots, n$ we have 
\begin{align*}
\sigma g(x)^\intercal u_i &=f(x+\sigma u_i)-f(x)=\phi(x+\sigma u_i)-\phi(x)+\epsilon(x+\sigma u_i)-\epsilon(x)\\
&=\int_{0}^1 \sigma u_i^\intercal\nabla \phi(x+t\sigma u_i)dt+\epsilon(x+\sigma u_i)-\epsilon(x).
\end{align*}
From the $L$-smoothness of $\phi(\cdot)$ and the bound on $\epsilon(\cdot)$ we have 
\begin{align*}
\sigma |(g(x)-\nabla \phi(x))^\intercal u_i| \leq  \frac {L\sigma^2 \|u_i\|^2}{2}+2\epsilon_f, \quad \forall i=1, \ldots, n
\end{align*}
which in turn implies 
\begin{align*}
 \| Q_{\cal X}(g(x)-\nabla \phi(x))\| \leq  \frac {\sqrt{n}L\sigma  }{2}+\frac{2 \sqrt{n} \epsilon_f}{\sigma},
\end{align*}
and the theorem statement follows. 
\end{proof}

This result has the implication that large $\| Q_\mathcal{X}^{-1}\|$ can cause large deviation of $g(x)$ from $\nabla \phi(x)$. Thus, it is  desirable to select $\mathcal{X}$ in such a way that the condition number of $Q_\mathcal{X}^{-1}$ is small, which is clearly optimized when $Q_\mathcal{X}$ is orthonormal. Thus, we trivially recover the theorem for FFD, and moreover, extend this result to any orthonormal set of directions $\{u_1, u_2 \ldots, u_n\}$, such as those used in \cite{choro2}.  Aside from the condition number, the  important difference between general interpolation sets and orthonormal ones is in the computational cost of evaluating $g(x)$. In particular, $g(x)$ is obtained by solving a system of linear equations given by \eqref{eq:main_system},
which in general requires ${\cal O}(n^3)$ computations, but that reduces to ${\cal O}(n^2)$ in the case of general orthornormal matrices $Q_\mathcal{X}$, and further reduces to ${\cal O}(n)$ for $Q_\mathcal{X}=I$, as in the case of FFD. 
In  \cite{choro2}, it is proposed to use  scaled randomized Haddamard matrices as $Q_\mathcal{X}$. This is only possible if the problem dimension is a power of $2$, but it reduces linear algebra cost  of matrix-vector products
from  ${\cal O}(n^2)$ to  ${\cal O}(n\log n)$. 

On the other hand, using general sample sets allows for greater flexibility (within an optimization algorithm), in particular enabling the re-use of sample points from prior iterations. When using FFD to compute $g(x)$, $n$ function evaluations are always required, while when using interpolation it is possible to update the interpolation set by replacing only one (or a few) sample point(s) in the set $\mathcal{X}$. It is important to note that while $\mathcal{X}$ can be fairly general, the condition number of  the matrix $Q_\mathcal{X}$ has to remain bounded for Theorem \ref{thm:bnd_linmod} to be useful. The sets with bounded condition number of  $Q_\mathcal{X}$ are called {\em well-poised}; see  \cite{ConnScheVice08c} for  details about the construction and maintenance of interpolation sets in model based trust region DFO methods. 

The bounds of Theorem \ref{thm:bnd_linmod} are similar to those of Theorem \ref{thm:FFDbound}, hence, if the sampling radius $\sigma$ and the  the gradient norm satisfy
 \begin{align*}
	\sigma =2\sqrt{ \frac{\epsilon_f}{L}} \quad \text{and} \quad \|\nabla \phi(x)\|\geq\frac{2 \| Q_\mathcal{X}^{-1}\|\sqrt{nL\epsilon_f}}{\theta},
 \end{align*}

respectively, then \eqref{eq:theta_cond} holds.  
 
 It is possible to derive an analogue of Theorem \ref{thm:CFDbound} by including $n$ additional sample points  $\{x-\sigma u_1,\dots,x-\sigma u_n \}$ in the gradient estimation procedure. Namely, two sample sets are used,  $\mathcal{X}^+=\{x+\sigma u_1, x+\sigma u_2, \ldots x+\sigma u_n\}$ and $\mathcal{X}^-=\{x-\sigma u_1, x-\sigma u_2, \ldots x-\sigma u_n\}$, with corresponding matrices $Q_{\mathcal{X}^+}$
and $Q_{\mathcal{X}^-}$. The linear model $m(y)=f(x)+g^\intercal(y-x)$ is then computed as an average of the two interpolation models, that is 
\begin{align*}
	 g=\frac{g_0^+ + g_0^-}{2}= \frac{1}{2\sigma}[Q_{\mathcal{X}^+}^{-1}F_{\mathcal{X}^+}+Q_{\mathcal{X}^-}^{-1}F_{\mathcal{X}^-}].
\end{align*}
The gradient estimates are computed in this way in \cite{choro}, for the case of orthonormal sets and symmetric finite difference computations. Similarly to the CFD, this results in better accuracy bounds in terms of $\sigma$; however, this requires additional $n$ function evaluations at each iteration, which contradicts the original idea of using interpolation as a means for reducing the per-iteration function evaluation cost.

\subsection{Gradient Estimation via Gaussian Smoothing} \label{sec:gauss_smooth}

Gaussian smoothing has recently become a popular tool for building gradient approximations using only function values. This approach has been exploited in several recent papers; see e.g., \cite{nesterov2017random, maggiar2018derivative,ES, NES2,bayandina2017gradient}.
 
Gaussian smoothing of a given function $f$ is obtained as  follows:
\begin{align}	\label{eq: ES obj}
F(x) &= \mathbb{E}_{y \sim \mathcal{N}(x, \sigma^2I)} [f(y)] = \int_{\R^n} f(y) \pi (y|x,\sigma^2I) dy\nonumber \\
	& = \mathbb{E}_{u \sim \mathcal{N}({0}, I)} [f(x+\sigma u)] = \int_{\R^n} f(x+\sigma u) \pi (u|0,I) du,  
\end{align}
where $\mathcal{N}(x, \sigma^2I)$ denotes the multivariate normal distribution with mean $x$ and covariance matrix $\sigma^2I$, $\mathcal{N}({0}, I)$ denotes the standard multivariate normal distribution, and the functions $\pi (y|x,\sigma^2I)$ and $\pi(u|0,I)$ denote the probability density functions (pdf) of $\mathcal{N}(x,\sigma^2 I)$ evaluated at $y$ and $\mathcal{N}(0,I)$ evaluated at $u$, respectively.  Using properties of derivatives of expected value functions \cite{AsmussenGlynnBook}, the gradient of $F$ can be expressed as
\begin{align}		\label{eq: ES g}
    &\nabla  F(x) = \frac{1}{\sigma} {\mathbb{E}}_{u \sim \mathcal{N}({0}, I)} [f(x+\sigma u) u]. 
\end{align}
Assume $f$ is an approximation of $\phi$ with the approximation error bounded by $\epsilon_f$ uniformly, i.e., Assumption \ref{assum:bounded_noise} holds. If Assumption \ref{assum:bounded_noise}  holds, then the following bounds hold for the error between $\nabla F(x)$ and $\nabla \phi(x)$. 
If $\phi$ has $L$-Lipschitz continuous gradients, that is if Assumption \ref{assum:lip_cont} holds, then
\begin{align}	\label{eq:GSG_bound1}
	\| \nabla F(x) - \nabla \phi(x)\| \le \sqrt{n} L \sigma +  \frac{\sqrt{n} \epsilon_f}{\sigma};
\end{align}
see Appendix \ref{app:GSG_bound1} for the proof\footnote{The bound \eqref{eq:GSG_bound1} was presented in \cite{maggiar2018derivative} without proof; we would like to thank the first author of \cite{maggiar2018derivative} for providing us with guidance of this proof.}. If the function $\phi$ has $M$-Lipschitz continuous Hessians, that is if Assumption \ref{assum:lip_cont_hess} holds, then
\begin{align}	\label{eq:cGSG_bound1}
	\| \nabla F(x) - \nabla \phi(x)\| \le nM \sigma^2 + \frac{\sqrt{n} \epsilon_f}{\sigma};
\end{align}
see Appendix \ref{app:cGSG_bound1} for proof. 

In order to approximate $\nabla \phi(x)$ one can approximate $  \nabla  F(x) $, with sufficient accuracy, by sample average approximation applied to \eqref{eq: ES g}, i.e.,
\begin{align}		\label{eq:ncGSG}
	g(x)=\frac{1}{N\sigma} \sum_{i=1}^N f(x+\sigma u_i) u_i, 
\end{align}
where $u_i \sim \mathcal{N}({0}, I)$ for $i = 1,2,\dots, N$.  It can be easily verified that $g(x)$ computed via \eqref{eq:ncGSG} has large variance (the variance explodes as $\sigma$ goes to $0$). The following simple modification, 
\begin{align}		\label{eq:GSG}
	g(x) = \frac{1}{N} \sum_{i=1}^N \frac{f(x+\sigma u_i) - f(x)}{\sigma} u_i, 
\end{align}
 eliminates this problem  and is indeed  used in practice instead of \eqref{eq:ncGSG}; see \cite{ES,choro, choro2}. Note that  the expectation of \eqref{eq:GSG} is also  $\nabla F(x)$, since ${\mathbb{E}}_{u_i \sim \mathcal{N}(\mathbf{0}, I)} [f(x) u]$ is an all-zero vector for all $i$. 
In what follows we will refer to $g(x)$ computed via \eqref{eq:GSG}  as the Gaussian smoothed gradient (GSG). As pointed out in \cite{nesterov2017random}, $\frac{f(x+\sigma u_i) - f(x)}{\sigma} u_i$ can be 
interpreted as a forward finite difference version of the directional derivative of $f$ at $x$ along $u_i$. Moreover, one can also consider the central difference variant of \eqref{eq:GSG}--central Gaussian smoothed gradient (cGSG)--which is computed as follows,
\begin{align}		\label{eq:cGSG}
	g(x) = \frac{1}{2N} \sum_{i=1}^N \frac{f(x+\sigma u_i) - f(x-\sigma u_i)}{\sigma} u_i.
\end{align}

The properties of \eqref{eq: ES obj} and \eqref{eq:GSG}, with $N=1$, were analyzed in \cite{nesterov2017random}. However, this analysis does not explore the effect of $N>1$ on the variance of  $g(x)$. On the other hand, in \cite{ES} the authors propose an algorithm that uses GSG estimates, \eqref{eq:GSG} and \eqref{eq:cGSG}, with large samples sizes $N$
in a fixed step size gradient descent algorithm, but without any analysis or discussion of the choices of $N$, $\sigma$ or $\alpha$ (where $\alpha$ is the step size). Thus, the purpose of this section is to derive bounds on the approximation error $\| g(x) - \nabla \phi(x) \|$ for GSG and cGSG, and to derive conditions on $\sigma$ and $N$ under which condition \eqref{eq:theta_cond} holds (and as a result the convergence results for a line search DFO algorithm \cite{berahas2019global} based on these approximations also hold). 

We first note that  there are two sources of error: $(i)$ approximation of the true function $\phi$ by the Gaussian smoothed function $F$ of the noisy function $f$, and $(ii)$ approximation of $\nabla F(x)$ via sample average approximations. Hence, we have that
\begin{align} \label{eq:2terms} 
	\| g(x) - \nabla \phi(x) \| &= \| (\nabla F(x) - \nabla \phi(x)) + (g(x) - \nabla F(x))\| \nonumber \\
		& \leq \| \nabla F(x) - \nabla \phi(x)\| + \| g(x) - \nabla F(x)\|.
\end{align} 
The bound on the first term is given by \eqref{eq:GSG_bound1} or \eqref{eq:cGSG_bound1}. 
What remains is to bound the second term $\| g(x) - \nabla F(x)\|$, the error due to the sample average approximation. 

Since \eqref{eq:GSG} (and \eqref{eq:cGSG}) is a (mini-)batch stochastic gradient estimate of $\nabla F(x)$, the probabilistic bound on $\| g(x) - \nabla F(x)\| $ is derived by bounding the expectation, which is equivalent to bounding the variance of the  (mini-)batch stochastic gradient. Existing bounds in the literature, see e.g., \cite{tripuraneni2017stochastic}, are  derived under the assumption that $\|g(x) - \nabla \phi(x)\|$ is uniformly bounded above almost surely, which does not hold for GSG because when $u$ follows a Gaussian distribution, $\frac{f(x+\sigma u) - f(x)}{\sigma} u$ can be arbitrarily large with positive probability. Here, we bound $\| g(x) - \nabla F(x)\| $ only under Assumptions \ref{assum:lip_cont} or \ref{assum:lip_cont_hess}. It is shown in \cite{nesterov2017random} that Assumption \ref{assum:lip_cont} implies that $\nabla F(x)$ is $L$-Lipschitz continuous; by applying similar logic it can be shown that Assumption \ref{assum:lip_cont_hess} implies that $\nabla^2 F(x)$ is $M$-Lipschitz continuous.

The variance for \eqref{eq:GSG} can be expressed as 
\begin{align} \label{eq:var GSG}  
	\Var{g(x)} = \frac{1}{N} \mathbb{E}_{u \sim \mathcal{N}(0,I)} \left[\left(\frac{f(x+\sigma u) - f(x)}{\sigma} \right)^2 u u^\intercal \right] - \frac{1}{N} \nabla F(x) \nabla F(x)^\intercal , 
\end{align}
and the variance of \eqref{eq:cGSG} can be expressed as
\begin{align} \label{eq:var cGSG}
	\Var{g(x)} = \frac{1}{N} \mathbb{E}_{u \sim \mathcal{N}(0,I)} \left[\left( \frac{f(x+\sigma u) - f(x-\sigma u)}{2\sigma} \right)^2 u u^\intercal \right] - \frac{1}{N} \nabla F(x) \nabla F(x)^\intercal .
\end{align}

The following properties of a normally distributed multivariate random variable $u \in \R^n$ will be used in our analysis  and are derived in Appendix \ref{app:mvn}.  Let  $a \in \R^n$ be any constant vector, then
\begin{equation} \label{eq:mvn}  \everymath{\displaystyle} \begin{aligned} 
	&\mathbb{E}_{u \sim \mathcal{N}({0},I)} \left[(a^\intercal u)^2 u u^\intercal \right]= a^\intercal a I + 2 a a^\intercal \\
	&\mathbb{E}_{u \sim \mathcal{N}({0},I)} \left[a^\intercal u \cdot \|u\|^k \cdot u u^\intercal \right]= 0_{n\times n}  \text{ for } k = 0,1,2,... \\
	&\mathbb{E}_{u \sim \mathcal{N}({0},I)} \left[\|u\|^k u u^\intercal \right] 
	\begin{cases}
		= (n+2)(n+4)\cdots(n+k) I &\text{for } k = 0, 2, 4, \dots \\
		\preceq (n+1)(n+3)\cdots(n+k) \cdot n^{-0.5} I &\text{for } k = 1, 3, 5, \dots
	\end{cases}
\end{aligned} \end{equation}
It is interesting to note that only the last property is specific to the normal distribution, while the first two expressions hold for any random vector $u$, for which $u_i$ are symmetric iid random variables with unit variance. Thus, techniques presented in this paper can be extended to other distributions, such as the one used in \cite{spall2000adaptive}.

We now derive bounds for the variances of GSG and cGSG. 

\begin{lemma} \label{lemma:bnd_var}
Under Assumption  \ref{assum:lip_cont}, if $g(x)$ is calculated by \eqref{eq:GSG}, then, for all $x \in \mathbb{R}^n$, $\Var{g(x)} \preceq \kappa(x) I$ where 
\begin{align*}	
	\kappa(x) = \frac{3}{N} \left( 3\|\nabla\phi(x)\|^2 + \frac{L^2 \sigma^2}{4} (n+2)(n+4) + \frac{4\epsilon_f^2}{\sigma^2} \right).
\end{align*} 

Alternatively, under Assumption  \ref{assum:lip_cont_hess}, if $g(x)$ is calculated by  \eqref{eq:cGSG}, then, for all $x \in \mathbb{R}^n$, $\Var{g(x)} \preceq \kappa(x) I$ where
\begin{align*}	
	\kappa(x) = \frac{3}{N} \left( 3\|\nabla\phi(x)\|^2 + \frac{M^2 \sigma^4}{36} (n+2)(n+4)(n+6) + \frac{\epsilon_f^2}{\sigma^2} \right).
\end{align*} 
\end{lemma}

\begin{proof} 
Since $\nabla F(x) \nabla F(x)^\intercal \succeq 0$, we derive from \eqref{eq:var GSG}
\begin{equation*} \begin{aligned} 
\Var{g(x)} \preceq& \frac{1}{N} \mathbb{E}_{u \sim \mathcal{N}(0,I)} \left[\left(\frac{f(x+\sigma u) - f(x)}{\sigma} \right)^2 u u^\intercal \right] \\
=&\frac{1}{N} \mathbb{E}_{u \sim \mathcal{N}(0,I)} \left[\left(\frac{f(x+\sigma u) - f(x)}{\sigma} u \right) \left(\frac{f(x+\sigma u) - f(x)}{\sigma} u \right)^\intercal \right]. 
\end{aligned} \end{equation*} 
The term in the paretheses can be written as 
\begin{align*}
\omit\rlap{$ \frac{f(x+\sigma u) - f(x)}{\sigma} u$} \\
&= \frac{\phi(x+\sigma u) + \epsilon(x+\sigma u) - \phi(x) - \epsilon(x)}{\sigma} u \\
&= \frac{\phi(x+\sigma u) - \phi(x) - \nabla\phi(x)^\intercal \sigma u}{\sigma} u + \frac{\epsilon(x+\sigma u) - \epsilon(x)}{\sigma} u + \nabla\phi(x)^\intercal u u.
\end{align*}
Considering for any three vectors $\{v_1,v_2,v_3\} \subset \R^n$, it must be $(v_1+v_2+v_3)(v_1+v_2+v_3)^\intercal \preceq 3v_1 v_1^\intercal + 3v_2 v_2^\intercal  + 3v_3 v_3^\intercal$, we have 
\begin{align*}
	\omit\rlap{$\Var{g(x)}$} \\
	\preceq& \frac{3}{N} \mathbb{E}_{u \sim \mathcal{N}({0},I)} \left[\left(\frac{\phi(x+\sigma u) - \phi(x) - \nabla\phi(x)^\intercal \sigma u}{\sigma} \right)^2 u u^\intercal + \left[\left(\frac{\epsilon(x+\sigma u) - \epsilon(x)}{\sigma} \right)^2 u u^\intercal \right] + (\nabla\phi(x)^\intercal u)^2 uu^\intercal \right] \\
	\preceq& \frac{3}{N} \mathbb{E}_{u \sim \mathcal{N}({0},I)} \left[\left(\frac{L\sigma}{2} u^\intercal u\right)^2 u u^\intercal + \left(\frac{2\epsilon_f}{\sigma} \right)^2 u u^\intercal + (\nabla\phi(x)^\intercal u)^2 uu^\intercal \right] \\
	\stackrel{\mathclap{\mathrm{\eqref{eq:mvn}}}}{=}{}& \ \frac{3}{N} \left( \frac{L^2 \sigma^2}{4} (n+2)(n+4) I + \frac{4\epsilon_f^2}{\sigma^2} I + \|\nabla\phi(x)\|^2 I + 2\nabla\phi(x) \nabla\phi(x)^\intercal \right) \\
	\preceq& \frac{3}{N} \left( \frac{L^2 \sigma^2}{4} (n+2)(n+4) + \frac{4\epsilon_f^2}{\sigma^2} + 3\|\nabla\phi(x)\|^2 \right) I,  
\end{align*}
where the second inequality comes from the Lipschitz continuity of the gradients (Assumption \ref{assum:lip_cont}) and the bound on the noise, and the last inequality comes from the fact that $vv^\intercal \preceq \|v\|^2 I$ for any $v \in \R^n$. 

For cGSG, we follow the same logic as above. By \eqref{eq:var cGSG} we get 
\begin{equation*} \begin{aligned} 
\Var{g(x)} \preceq& \frac{1}{N} \mathbb{E}_{u \sim \mathcal{N}(0,I)} \left[\left(\frac{f(x+\sigma u) - f(x-\sigma u)}{2\sigma} \right)^2 u u^\intercal \right] \\
=&\frac{1}{N} \mathbb{E}_{u \sim \mathcal{N}(0,I)} \left[ \left(\frac{f(x+\sigma u) - f(x-\sigma u)}{2\sigma} u \right) \left(\frac{f(x+\sigma u) - f(x-\sigma u)}{2\sigma} u \right)^\intercal \right]. 
\end{aligned} \end{equation*} 
The term in the parentheses can be written as 
\begin{align*}
\omit\rlap{$\frac{f(x+\sigma u) - f(x-\sigma u)}{2\sigma} u$} \\
&= \frac{\phi(x+\sigma u) + \epsilon(x+\sigma u) - \phi(x-\sigma u) - \epsilon(x-\sigma u)}{2\sigma} u \\
&= \frac{\phi(x+\sigma u) - \phi(x-\sigma u) - 2\sigma \nabla\phi(x)^\intercal u}{2\sigma} u + \frac{\epsilon(x+\sigma u) - \epsilon(x)}{2\sigma} u + \nabla\phi(x)^\intercal u u \\
&= \frac{\left(\phi(x+\sigma u) - \phi(x) - \sigma \nabla\phi(x)^\intercal u - \frac{\sigma^2}{2} u^\intercal \nabla^2 \phi(x) u\right) - \left(\phi(x-\sigma u) - \phi(x) + \sigma \nabla\phi(x)^\intercal u - \frac{\sigma^2}{2} u^\intercal \nabla^2 \phi(x) u\right)}{2\sigma} u \\
&\quad + \frac{\epsilon(x+\sigma u) - \epsilon(x)}{2\sigma} u + \nabla\phi(x)^\intercal u u.
\end{align*}
Then,  for cGSG we have
\begin{align*}
	\omit\rlap{$\Var{g(x)}$} \\
	\preceq& \frac{3}{N} \mathbb{E}_{u \sim \mathcal{N}({0},I)} \left[\left(\frac{\phi(x+\sigma u) - \phi(x-\sigma u) - 2\sigma \nabla\phi(x)^\intercal u}{2\sigma} \right)^2 u u^\intercal + \left(\frac{\epsilon(x+\sigma u) - \epsilon(x)}{2\sigma} \right)^2 u u^\intercal + (\nabla\phi(x)^\intercal u)^2 uu^\intercal \right] \\
	\preceq& \frac{3}{N} \mathbb{E}_{u \sim \mathcal{N}({0},I)} \left[\left(\frac{M\sigma^2}{6} \|u\|^3 \right)^2 u u^\intercal + \left(\frac{2\epsilon_f}{2\sigma} \right)^2 u u^\intercal + (\nabla\phi(x)^\intercal u)^2 uu^\intercal \right] \\
	\stackrel{\mathclap{\mathrm{\eqref{eq:mvn}}}}{=}{}& \ \frac{3}{N} \left( \frac{M^2 \sigma^4}{36} (n+2)(n+4)(n+6) I + \frac{\epsilon_f^2}{\sigma^2} I + \|\nabla\phi(x)\|^2 I + 2\nabla\phi(x) \nabla\phi(x)^\intercal \right) \\
	\preceq& \frac{3}{N} \left( \frac{M^2 \sigma^4}{36} (n+2)(n+4)(n+6) + \frac{\epsilon_f^2}{\sigma^2} + 3\|\nabla\phi(x)\|^2 \right) I, 
\end{align*}
where the second inequality comes from the Lipschitz continuity of the Hessians (Assumption \ref{assum:lip_cont_hess}) and the bound on noise, and the last inequality comes from the fact that $vv^\intercal \preceq \|v\|^2 I$ for any $v \in \R^n$.
\end{proof}

Using the results of Lemma \ref{lemma:bnd_var}, we can now bound the quantity $\| g(x) - \nabla F(x) \|$ in \eqref{eq:2terms}, in probability, using Chebyshev's inequality. 

\begin{lemma} \label{lem:prob_bnd_smoothed} 
Let $F$ be a Gaussian smoothed approximation of $f$ \eqref{eq: ES obj}. Under Assumption  \ref{assum:lip_cont}, if $g(x)$ is  calculated via \eqref{eq:GSG} with sample size
\begin{align*}
	N \ge \frac{3n}{\delta r^2} \left( 3\|\nabla\phi(x)\|^2 + \frac{L^2 \sigma^2}{4} (n+2)(n+4) + \frac{4\epsilon_f^2}{\sigma^2} \right),
\end{align*}
then, for all $x \in \mathbb{R}^n$, $\|g(x) - \nabla F(x)\| \le r$ holds with probability at least $1 - \delta$, for any $r>0$ and $0< \delta<1$. 

Alternatively, under Assumption  \ref{assum:lip_cont_hess}, if $g(x)$ is calculated via \eqref{eq:cGSG} with sample size $2N$ where
\begin{align*}
	N \ge \frac{3n}{\delta r^2} \left( 3\|\nabla\phi(x)\|^2 + \frac{M^2 \sigma^4}{36} (n+2)(n+4)(n+6) + \frac{\epsilon_f^2}{\sigma^2} \right),
\end{align*}
then, for all $x \in \mathbb{R}^n$, $\|g(x) - \nabla F(x)\| \le r$ holds with probability at least $1 - \delta$, for any $r>0$ and $0< \delta <1$. 
\end{lemma}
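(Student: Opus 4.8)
The plan is to combine the matrix variance bounds of Lemma~\ref{lemma:bnd_var} with a Chebyshev-type estimate, i.e., Markov's inequality applied to the squared error norm. First I would record that both \eqref{eq:GSG} and \eqref{eq:cGSG} are unbiased estimators of $\nabla F(x)$: for \eqref{eq:GSG} this was already observed, since $\mathbb{E}_{u\sim\mathcal{N}(0,I)}[f(x)u]=0$, and for \eqref{eq:cGSG} it follows from \eqref{eq: ES g} together with the change of variables $u\mapsto-u$, which gives $\mathbb{E}_{u\sim\mathcal{N}(0,I)}[f(x-\sigma u)u]=-\nabla F(x)$. Hence $g(x)-\nabla F(x)$ is a zero-mean random vector whose covariance matrix is exactly $\Var{g(x)}$, and the task reduces to controlling the norm of this deviation.

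Next I would convert the matrix inequality $\Var{g(x)}\preceq\kappa(x)I$ from Lemma~\ref{lemma:bnd_var} into a scalar second-moment bound by taking traces:
\begin{align*}
	\mathbb{E}\big[\|g(x)-\nabla F(x)\|^2\big] = \operatorname{tr}\!\big(\Var{g(x)}\big) \le \operatorname{tr}\big(\kappa(x)I\big) = n\,\kappa(x),
\end{align*}
using the monotonicity of the trace under the Loewner order. Markov's inequality applied to the nonnegative random variable $\|g(x)-\nabla F(x)\|^2$ then yields, for any $r>0$,
\begin{align*}
	\Pr\big(\|g(x)-\nabla F(x)\|\ge r\big) = \Pr\big(\|g(x)-\nabla F(x)\|^2\ge r^2\big) \le \frac{n\,\kappa(x)}{r^2}.
\end{align*}

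Finally I would substitute the explicit values of $\kappa(x)$ supplied by Lemma~\ref{lemma:bnd_var} — the $L$-Lipschitz expression in the GSG case, the $M$-Lipschitz expression in the cGSG case — and demand $n\kappa(x)/r^2\le\delta$. Since in each case $\kappa(x)=\frac{1}{N}(\cdots)$, this inequality rearranges exactly to the stated lower bound on $N$; in the centered case the extra factor of two in the evaluation count ($2N$ rather than $N$ samples) is just the bookkeeping already noted for \eqref{eq:cGSG}. Taking complements gives $\|g(x)-\nabla F(x)\|\le r$ with probability at least $1-\delta$, for every $x\in\mathbb{R}^n$. I do not expect a genuine obstacle here: the only points needing a little care are the passage from the matrix-valued variance to the scalar expected squared error via the trace, and verifying unbiasedness of the antithetic estimator \eqref{eq:cGSG}; after that the argument is pure substitution.
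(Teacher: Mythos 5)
Your proof is correct and follows essentially the same route as the paper: both rest on the variance bound of Lemma~\ref{lemma:bnd_var} and a Chebyshev/Markov second-moment argument that yields the identical tail bound $\mathbb{P}\left(\|g(x)-\nabla F(x)\|>r\right)\le n\kappa(x)/r^2$, after which the lower bound on $N$ is pure substitution. The only (minor) difference is that the paper applies the multivariate Chebyshev inequality to the covariance-normalized deviation and then invokes $\Var{g(x)}^{-1}\succeq \kappa(x)^{-1}I$, whereas you take the trace of $\Var{g(x)}\preceq\kappa(x)I$ to get $\mathbb{E}\left[\|g(x)-\nabla F(x)\|^2\right]\le n\kappa(x)$ and apply scalar Markov --- a marginally cleaner path that also avoids the paper's implicit assumption that $\Var{g(x)}$ is invertible.
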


\begin{proof}
By Chebyshev's inequality, for any $r > 0$, we have
\begin{align*}
\mathbb{P}\left\{\sqrt{(g(x) - \nabla F(x))^\intercal \Var{g(x)}^{-1} (g(x) - \nabla F(x))} > r \right\} 
&\le \frac{n}{r^2}. 
\end{align*}
Since by Lemma \ref{lemma:bnd_var} $\Var{g(x)} \preceq \kappa(x) I$, with the appropriate $\kappa(x)$ as shown in the statement of the Lemma, we have $\Var{g(x)}^{-1} \succeq \kappa (x)^{-1} I$ and 
\begin{align*}
 \sqrt{(g(x) - \nabla F(x))^\intercal \Var{g(x)}^{-1} (g(x) - \nabla F(x))} \ge \kappa (x)^{-\frac{1}{2}} \|g(x) - \nabla F(x)\|.
\end{align*}
Therefore, we have, 
\begin{align*}
\mathbb{P}\left\{\kappa (x)^{-\frac{1}{2}} \|g(x) - \nabla F(x)\| > r \right\} \le \frac{n}{r^2}  \ \ \Longrightarrow \ \
\mathbb{P}\left\{ \|g(x) - \nabla F(x)\| > r \right\} \le \frac{\kappa (x) n}{r^2}. 
\end{align*}
To ensure  $\mathbb{P}\left\{ \|g(x) - \nabla F(x)\| \le r \right\} \ge 1 - \delta$, we choose $\kappa$ such that $ \frac{\kappa (x) n}{r^2}\leq \delta$, by choosing large enough $N$. The exact bounds on $N$ (and thus the result of Lemma \ref{lem:prob_bnd_smoothed}) follow immediately from the two respective expressions for $\kappa(x)$ in  Lemma \ref{lemma:bnd_var}.
\end{proof}

Now with bounds for both terms in \eqref{eq:2terms}, we can bound $\|g(x) - \nabla \phi(x)\|$, in probability. 

\begin{theorem} \label{thm:prob_bnd_smoothed} 
Suppose that Assumption  \ref{assum:lip_cont} holds and  $g(x)$ is calculated via \eqref{eq:GSG}. 
If 
\begin{align*}
	N \ge \frac{3n}{\delta r^2} \left( 3\|\nabla\phi(x)\|^2 + \frac{L^2 \sigma^2}{4} (n+2)(n+4) + \frac{4\epsilon_f^2}{\sigma^2} \right),
\end{align*}
then, for all $x \in \mathbb{R}^n$ and $r>0$, 
\begin{align}	\label{eq:GSG_thm}
	\|g({x}) - \nabla\phi({x})\| \leq \sqrt{n} L \sigma + \frac{\sqrt{n}\epsilon_f}{\sigma} + r.
\end{align}
with probability at least $1 - \delta$.

Alternatively, suppose that Assumption  \ref{assum:lip_cont_hess} holds and  $g(x)$ is calculated via \eqref{eq:cGSG}. 
If 
\begin{align*}
	N \ge \frac{3n}{\delta r^2} \left( 3\|\nabla\phi(x)\|^2 + \frac{M^2 \sigma^4}{36} (n+2)(n+4)(n+6) + \frac{\epsilon_f^2}{\sigma^2} \right),
\end{align*}
then, for all $x \in \mathbb{R}^n$ and $r>0$, 
\begin{align}	\label{eq:cGSG_thm}
	\|g({x}) - \nabla \phi({x})\| \leq nM \sigma^2 + \frac{\sqrt{n}\epsilon_f}{\sigma} + r.
\end{align}
with probability at least $1 - \delta$.
\end{theorem}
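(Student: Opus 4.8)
The plan is to assemble the statement from two ingredients that are already in hand: the deterministic bound on the smoothing bias $\|\nabla F(x) - \nabla \phi(x)\|$, and the probabilistic bound on the sample-average-approximation error $\|g(x) - \nabla F(x)\|$ furnished by Lemma \ref{lem:prob_bnd_smoothed}. The glue is the triangle inequality already recorded in \eqref{eq:2terms},
\begin{align*}
	\|g(x) - \nabla \phi(x)\| \le \|\nabla F(x) - \nabla \phi(x)\| + \|g(x) - \nabla F(x)\|,
\end{align*}
which reduces the proof to controlling the two terms on the right-hand side separately.

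First I would handle the bias term. In the GSG case (Assumption \ref{assum:lip_cont}), inequality \eqref{eq:GSG_bound1} gives $\|\nabla F(x) - \nabla \phi(x)\| \le \sqrt{n}L\sigma + \sqrt{n}\epsilon_f/\sigma$ for every $x \in \mathbb{R}^n$; in the cGSG case (Assumption \ref{assum:lip_cont_hess}), inequality \eqref{eq:cGSG_bound1} gives $\|\nabla F(x) - \nabla \phi(x)\| \le nM\sigma^2 + \sqrt{n}\epsilon_f/\sigma$. Both bounds are deterministic, so they contribute the corresponding leading terms of \eqref{eq:GSG_thm} and \eqref{eq:cGSG_thm} with probability one.

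Next I would handle the stochastic term. The sample-size requirements stated in the theorem are precisely those appearing in Lemma \ref{lem:prob_bnd_smoothed} (with the same $r$ and the same $\delta$), so that lemma applies verbatim and yields $\|g(x) - \nabla F(x)\| \le r$ on an event of probability at least $1-\delta$. On that event, adding the deterministic bias bound to the bound $r$ produces the claimed inequality --- \eqref{eq:GSG_thm} under Assumption \ref{assum:lip_cont} with $g$ from \eqref{eq:GSG}, and \eqref{eq:cGSG_thm} under Assumption \ref{assum:lip_cont_hess} with $g$ from \eqref{eq:cGSG}.

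I do not expect any genuine obstacle here: the analytic work has been carried out in Lemmas \ref{lemma:bnd_var} and \ref{lem:prob_bnd_smoothed} and in the bias bounds \eqref{eq:GSG_bound1}--\eqref{eq:cGSG_bound1}. The only points requiring a little care are bookkeeping ones: pairing the forward variant with Assumption \ref{assum:lip_cont} and \eqref{eq:GSG_bound1}, and the centered variant with Assumption \ref{assum:lip_cont_hess} and \eqref{eq:cGSG_bound1}; and observing that randomness enters only through $\|g(x) - \nabla F(x)\|$, so the $1-\delta$ confidence level of Lemma \ref{lem:prob_bnd_smoothed} carries over unchanged to the combined bound while the bias term needs no probabilistic qualification.
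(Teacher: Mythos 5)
Your proposal is correct and follows exactly the paper's own argument: the paper likewise combines the deterministic bias bounds \eqref{eq:GSG_bound1} and \eqref{eq:cGSG_bound1} with the probabilistic SAA bounds of Lemma \ref{lem:prob_bnd_smoothed} via the triangle-inequality decomposition \eqref{eq:2terms}. Nothing is missing.
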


\begin{proof} The proof of the first part \eqref{eq:GSG_thm} is a straightforward combination of the bound in \eqref{eq:GSG_bound1} and the result of the first part of Lemma \ref{lem:prob_bnd_smoothed}. The proof for the second part \eqref{eq:cGSG_thm} is a straightforward combination of the bound in \eqref{eq:cGSG_bound1} and the result of the second part of Lemma \ref{lem:prob_bnd_smoothed}.
\end{proof}

With the results of Theorem \ref{thm:prob_bnd_smoothed}, we can now derive bounds on $\sigma$ and $N$ that ensure that \eqref{eq:theta_cond} holds with  probability $1-\delta$. To ensure \eqref{eq:theta_cond}, with probability $1-\delta$, using  Theorem \ref{thm:prob_bnd_smoothed} we want the following to hold
\begin{align}
	 \sqrt{n} L \sigma +  \frac{\sqrt{n} \epsilon_f}{\sigma} &\leq \lambda \theta \|\nabla \phi({x})\| \label{eq:guass1},\\
	 r &\leq  (1-\lambda) \theta \|\nabla \phi({x})\|, \label{eq:guass2}
\end{align}
for some $\lambda\in (0,1)$. 

Let us first consider $g(x)$ calculated via \eqref{eq:GSG}. To ensure that \eqref{eq:guass1} holds, we impose conditions derived following the same logic as was done for the case of Forward Finite Differences. Namely,  
 \begin{align*}
	\sigma = \sqrt{ \frac{\epsilon_f}{L}} \quad \text{and} \quad  \|\nabla \phi(x)\|\geq \frac{2\sqrt{nL\epsilon_f}}{\lambda\theta}.
 \end{align*}

Now using these bounds and substituting $r = (1-\lambda) \theta \|\nabla \phi({x})\|$ into the first bound on $N$ in Theorem \ref{thm:prob_bnd_smoothed} we have
\begin{align}\label{eq:GSG_nbound}
	&\frac{3n}{\delta r^2} \left( 3\|\nabla\phi(x)\|^2 + \frac{L^2 \sigma^2}{4} (n+2)(n+4) + \frac{4\epsilon_f^2}{\sigma^2} \right)\\
	& \qquad \leq \frac{9n}{\delta  \theta^2 }\frac{1}{(1-\lambda)^2} + \left (\frac{3(n+2)(n+4)}{16\delta   }  + \frac{3}{\delta}\right ) \frac{\lambda^2}{(1-\lambda)^2} \nonumber.
\end{align}
We are interested in making the lower bound on $N$ as small as possible and hence we are concerned with its dependence on $n$, when $n$ is relatively large. Henceforth, we assume that $n>1$ and choose $\lambda$ such 
that $ \frac{\lambda^2}{(1-\lambda)^2} \leq \frac{1}{n+2}$ so as to reduce the scaling of the second term with $n$ and to simplify the expression. This is always possible, because  $\frac{\lambda^2}{(1-\lambda)^2}$ is monotonically increasing with $\lambda$ and equals $0$ for $\lambda=0$.  Specifically, we can choose $\lambda = \frac{1}{3\sqrt{n}}$, because it is easy to show that for this value of $\lambda$, 
 $ \frac{\lambda^2}{(1-\lambda)^2} \leq \frac{1}{n+2} \le \frac{1}{n}$ for all $n\geq 1$. In fact, for large values of $n$ we can choose
 $\lambda$ to be closer in value to $\frac{1}{\sqrt{n}}$, but for simplicity we will consider the choice that fits all $n$.
Using the fact that $\lambda\leq \frac{1}{\sqrt{n}}$, and thus $\frac{1}{(1-\lambda)^2}\leq \frac{n}{(\sqrt{n}-1)^2}$, and also that $\frac{1}{{n+2}}\leq \frac{1}{2}$, the right hand side of \eqref{eq:GSG_nbound} is bounded from above by 
\begin{align}\label{eq:GSG_nbound_simple}
 \frac{9n}{\delta  \theta^2 }\frac{n}{(\sqrt{n}-1)^2} + \frac{3(n+4)}{16\delta}  + \frac{3}{n\delta}. 
\end{align}
This implies that by choosing $N$ at least as large as the value of \eqref{eq:GSG_nbound_simple}
we ensure that \eqref{eq:guass2} holds. 

We now summarize  the   result  for the gradient approximation computed via \eqref{eq:GSG}, for $\lambda= \frac{1}{3\sqrt{n}}$.
\begin{corollary} \label{corr:GSG} 
Suppose that Assumption \ref{assum:lip_cont}  holds, $n>1$ 
and $g(x)$ is computed via  \eqref{eq:GSG}  with $N$ and $\sigma$ satisfying,
\begin{gather*}
N \geq \frac{9n}{\delta  \theta^2} \frac{n}{(\sqrt{n}-1)^2} + \frac{3(n+4)}{16\delta}  + \frac{3}{n\delta} 
\quad \text{and} \quad
	\sigma=\sqrt{ \frac{\epsilon_f}{L}}.
\end{gather*}
If $ \|\nabla \phi(x)\|\geq \frac{6n\sqrt{L\epsilon_f}}{\theta}$, then \eqref{eq:theta_cond} holds with  probability $1-\delta$. 
\end{corollary}


 
The bound for the number of samples for GSG is  larger than those required by FFD and interpolation, since the latter are fixed at $n$, although both scale linearly in $n$. 
Moreover, the dependence of $N$ on $\delta$ is high. However, this bound is derived as an upper bound, and hence in order to verify that GSG indeed requires a large number of samples to satisfy \eqref{eq:theta_cond} we need to establish the lower bound on $N$. In what follows we show that linear scaling of $N$ with respect to $n$ is necessary to guarantee that \eqref{eq:theta_cond} is satisfied. The dependence on $\delta$ is likely to be too pessimistic and is an artifact of using Chebychev's inequality. In the next section
we analyze a method that estimates gradients using samples uniformly distributed on a sphere, and for which we obtain better dependence on $\delta$ but still linear scaling with $n$. Note, also, that the dependence of the lower bound for $\| \nabla \phi(x)\|$  on $n$ in the GSG case is larger by a factor of $\sqrt{n}$ as compared to the FFD case
 
We now derive the analogous bounds on $N$ and $\sigma$ for the case when $g(x)$ is calculated via \eqref{eq:cGSG}. To ensure \eqref{eq:theta_cond}, with probability $1-\delta$, using  Theorem \ref{thm:prob_bnd_smoothed} we want the following to hold
\begin{align}
	 n M \sigma^2 +  \frac{\sqrt{n} \epsilon_f}{\sigma} &\leq \lambda \theta \|\nabla \phi({x})\| \label{eq:guass3},\\
	 r &\leq  (1-\lambda) \theta \|\nabla \phi({x})\|, \label{eq:guass4}
\end{align}
for some $\lambda\in (0,1)$. In order to ensure that \eqref{eq:guass3} holds, we use the same logic as was done for Central Finite Differences in Section \ref{sec:fd}. Namely, we require the following:
 \begin{align*}
	\sigma = \sqrt[3]{ \frac{\epsilon_f}{2\sqrt{n}M}} \quad \text{and} \quad \|\nabla \phi(x)\|\geq 
\frac{3}{\lambda \theta} \sqrt[3]{\frac{n^2 M \epsilon_f^2}{4}}.
 \end{align*}
Now using these bounds and setting $r = (1-\lambda) \theta \|\nabla \phi({x})\|$ into the second bound on $N$ in Theorem \ref{thm:prob_bnd_smoothed} we have
\begin{align*}
	 &\frac{3n}{\delta r^2} \left( 3\|\nabla\phi(x)\|^2 + \frac{M^2 \sigma^4}{36} (n+2)(n+4)(n+6) + \frac{\epsilon_f^2}{\sigma^2} \right) \\
 & \qquad \leq \frac{9n }{\delta  \theta^2} \frac{1}{(1-\lambda)^2} + \left( \frac{(n+2)(n+4)(n+6)}{48n\delta  }  + \frac{3}{4\delta} \right) \frac{ \lambda^2}{ (1-\lambda)^2}.
\end{align*}
As before, we are interested in making the lower bound on $N$ to scale at most linearly with $n$.  Thus, to achieve this and to simplify the expression we choose $\lambda$ such that $\frac{ \lambda^2}{ (1-\lambda)^2}\leq \frac{n}{(n+2)(n+4)} \le \frac{1}{n}$, which reduces the scaling of the second term with respect to $n$ and simplifies the expression. It is easy to show that $\lambda = \frac{1}{6\sqrt{n}}\leq\frac{1}{\sqrt{n}}$ satisfies  this condition.
 Then, using again the fact that  $\frac{1}{(1-\lambda)^2}\leq \frac{n}{(\sqrt{n}-1)^2}$ and $\frac{n}{(n+2)(n+4)}\leq \frac{1}{2}$  the above expression is bounded by
\begin{align*}
 \frac{9n}{\delta  \theta^2} \frac{n}{(\sqrt{n}-1)^2} + \frac{n+6}{48\delta } + \frac{3}{4n\delta}. 
\end{align*}

We now summarize  the   result  for the gradient approximation computed via \eqref{eq:cGSG}, for $\lambda =  \frac{1}{6\sqrt{n}}$.
\begin{corollary} \label{corr:cGSG} 
Suppose that Assumption \ref{assum:lip_cont_hess} holds, $n>1$
and $g(x)$ is computed via  \eqref{eq:cGSG}  with $N$ and $\sigma$ satisfying,
\begin{gather*}
N \geq  \frac{9n}{\delta  \theta^2} \frac{n}{(\sqrt{n}-1)^2} + \frac{n+6}{48\delta } + \frac{3}{4n\delta} \quad \text{and} \quad
	\sigma= \sqrt[3]{ \frac{\epsilon_f}{2\sqrt{n}M}}.
\end{gather*}
If $\|\nabla \phi(x)\|\geq \frac{18 }{\theta} \sqrt[3]{\frac{n^{7/2} M \epsilon_f^2}{4}}$, then \eqref{eq:theta_cond} holds with  probability $1-\delta$.
\end{corollary}

\subsubsection{Lower bound on $\delta$}
We have demonstrated that if $N\geq\Omega (\frac{9n}{\theta^2\delta})$, then  $\mathbb{P}(\|g(x) - \nabla \phi(x)\| \leq  \theta \|\nabla \phi(x)\|) \geq 1- \delta$; that is, having a   large enough number of samples is {\em sufficient} to ensure accurate gradient approximations with a desired probability. A question that remains is how many samples are {\em necessary} to ensure that accurate gradient approximations are obtained with high probability. Here we derive a lower bound on the probability of failure for \eqref{eq:GSG} to satisfy condition \eqref{eq:theta_cond}; i.e., 
\begin{align} \label{eq:likelybad}
	\mathbb{P}\left(\|g(x) - \nabla \phi(x)\| > \theta \|\nabla \phi(x)\| \right).  
\end{align}


We derive the lower bound for \eqref{eq:likelybad} theoretically, and then illustrate the lower bounds via numerical simulations for the specific case of a simple linear function of the form $f(x) =\phi(x)=a^\intercal x$,  where $a$ is an arbitrary nonzero vector in $\R^n$. For simplicity, through this subsection we assume that $\epsilon(x) = 0 $ for all $x \in \mathbb{R}^n$. In this case, for any $\sigma$, $\nabla F(x)=a$. Note also that in this case $\nabla f(x) = \nabla \phi(x) = \nabla F(x)=a$. We show that while theory gives us a weak lower bound, numerical simulations indicate that the true lower bound is much closer to the upper bound, in terms of dependence on $n$.


We use the following lower bound on the tail of a random variable $X$, derived in \cite{petrov2007lower}. For any $b$ that satisfies $0 \le b \le \mathbb{E}\left[|X|\right] < \infty$, 
\begin{align*}
	\mathbb{P}(|X| > b) \ge \frac{(\mathbb{E}[|X|] - b)^2}{\mathbb{E}[|X|^2]}. 
\end{align*}

We apply this bound  to the random variable $\|g(x) - \nabla F(x)\|$,  and $b=\theta\|a\|$. We have 
\begin{align*}
	\mathbb{P}\left(\|g(x) - \nabla \phi(x)\| > \theta \|\nabla \phi(x)\| \right) &= \mathbb{P}(\|g(x) - \nabla F(x)\| > b)\\
	  &= \mathbb{P}(\|g(x) - \nabla F(x)\|^2 > b^2) \\
	  &\ge \frac{\left(\mathbb{E}\left[\|g(x) - \nabla F(x)\|^2\right] - b^2 \right)^2}{\mathbb{E}\left[\|g(x) - \nabla F(x)\|^4\right]}
\end{align*}
for any $b$ such that $0 \le b \le \sqrt{\mathbb{E} \left[ \|g(x) - \nabla F(x)\|^2 \right]}$. The reason we consider the squared version of the condition is we are unable to calculate $\mathbb{E}\left[\|g(x) - \nabla F(x)\|^k\right]$ when $k$ is odd. 

For brevity, we omit the derivations of $\mathbb{E}\left[\|g(x) - \nabla F(x)\|^2\right]$ and $\mathbb{E}\left[\|g(x) - \nabla F(x)\|^4\right]$ from the main paper, and refer the reader to Appendices  \ref{app:lb_1} and \ref{app:lb_2}, respectively. The required expressions are:
\begin{align}	\label{eq:lower_req_eq1}
	\mathbb{E} \left[\|g(x) - \nabla F(x)\|^2\right] &= \frac{1}{N} (n + 1) a^\intercal a, \\
	\mathbb{E} \left[ \|g(x) - \nabla F(x)\|^4 \right] 
&= \frac{1}{N^3}\left((N-1)(n^2 + 4n + 7) (a^\intercal a)^2 + (3n^2 + 20n + 37) (a^\intercal a)^2\right). \label{eq:lower_req_eq2}
\end{align}
Thus for $\phi(x) = a^\intercal x$,
\begin{align*}
\mathbb{P} \left( \|g(x) - a\| > \theta \|a\|\right) &\ge \frac{ N^4 \left( \frac{1}{N}(n+1) a^\intercal a - \theta^2 a^\intercal a \right)^2 }{N(N-1)(n^2 + 4n + 7) (a^\intercal a)^2 + N(3n^2 + 20n + 37) (a^\intercal a)^2} \\
&= \frac{ N \left( (n+1) a^\intercal a - N \theta^2 a^\intercal a  \right)^2 }{(N-1)(n^2 + 4n + 7) (a^\intercal a)^2 + (3n^2 + 20n + 37) (a^\intercal a)^2}\\ 
&= \frac{ N \left( (n+1)  - \theta^2 N \right)^2 }{(N-1)(n^2 + 4n + 7) + (3n^2 + 20n + 37) }
\end{align*} 
for any $\theta$ and $N$ such that $0 \le \theta^2 a^\intercal a  \le \frac{1}{N} (n+1) a^\intercal a$. 


Consider $n$ large enough such that  $4(n+1)^2 \ge n^2 + 4n +7$ which is satisfied for $n \ge \frac{\sqrt{13} -2}{3} \approx 0.54$; and $4(n+1)^2 \ge 3n^2 +20n+37$ which is satisfied for $n \ge 6 + \sqrt{69} \approx 14.31$ (henceforth we assume that $n \geq 15$). Then we have,
\begin{align*}
\frac{N((n+1) - \theta^2 N)^2}{(N-1)(n^2+4n+7) + (3n^2 +20n+37)} \ge 
\frac{N((n+1) - \theta^2 N)^2}{(N-1)4(n+1)^2 + 4(n+1)^2} = 
\frac{((n+1) - \theta^2 N)^2}{4(n+1)^2}. 
\end{align*}
Thus, from 
\begin{align*} 
	\mathbb{P} \left( \|g(x) - a\| > \theta \|a\|\right) \ge \frac{((n+1) - \theta^2 N)^2}{4(n+1)^2} \ge \delta,  
\end{align*} 
we get
\begin{align*}
N \le \frac{(n+1)(1 - 2\sqrt{\delta})}{\theta^2} 
\Rightarrow 
\mathbb{P}(\|g(x) - \nabla \phi (x) \| > \theta \|\nabla \phi (x)\|) \ge \delta.
\end{align*}
It follows that for any $0 < \delta < \frac{1}{4}$, $n \ge 15$ and $N\leq \frac{1}{\theta^2}(1-2\sqrt{\delta})(n+1)$ 
\begin{align*}
\mathbb{P}(\|g(x) - \nabla \phi (x) \| > \theta \|\nabla \phi (x)\|) \ge \delta.
\end{align*}
In other words, to have $\mathbb{P}(\|g(x) - \nabla \phi (x) \| \le \theta \|\nabla \phi (x)\|) > 1 - \delta$, it is necessary to have 
$N > \frac{(1 - 2\sqrt{\delta})}{\theta^2}(n+1)$, which is a linear function in $n$.


We now show through numerical simulation of the specific case of $\phi(x)=a^\intercal x$ that in fact for much larger values of $\delta$, $N\geq n$ is required
to achieve \eqref{eq:theta_cond} for any $\theta<1$. Specifically, Figure \ref{fig:lowerbound} shows the distribution of  
\begin{align*}
   \theta=\frac{ \|g(x) - \nabla \phi(x)\| }{ \|\nabla \phi(x)\|}, 
\end{align*}
approximately computed via running $10000$ experiments,   where $\phi(x)=e^\intercal x$ ($e$ is a vector of all ones) and $n=32$, for different choices of $N \in \{ 1,2,4,8,16,32,64,128,256,512\}$. As is clear, $\theta$ is never smaller than $1$ when $N=1$. Moreover, $\theta$ is smaller than $\frac{1}{2}$, which is required by the theory in \cite{berahas2019global}, only about half the time when $N=128=4n$. Figure \ref{fig1k} shows the percent of successful trials ($\theta < \frac{1}{2}$) versus the size of the sample set ($N$), and Table \ref{tab1l} shows statistics of the empirical experiments for different  sizes of the sample set ($N$). 
As expected, as $N$ grows, the value of $\theta$ decreases, something that is not surprising, but at the same time not captured by the derived lower bound. Thus, we conclude  that the theoretical  lower bound we derive here is weak and to satisfy  \eqref{eq:theta_cond} with $\theta<  \frac{1}{2}$ and  probability of at least $\frac{1}{2}$ the size of the sample set needs to be larger than $n$.  A stronger theoretical lower bound supporting this claim remains an open question.


 In Section  \ref{sec:num} we present numerical evidence that shows that for a variety of functions choosing $N$ to be a small constant almost always results in  large values of $ \frac{ \|g(x) - \nabla \phi(x)\| }{ \|\nabla \phi(x)\|}
 $ with probability close to $1$.

\begin{figure}[ht]
    \centering
    \begin{subfigure}{.19\textwidth}
        \centering
        \includegraphics[trim=5 5 50 25,clip,width=0.95\linewidth]{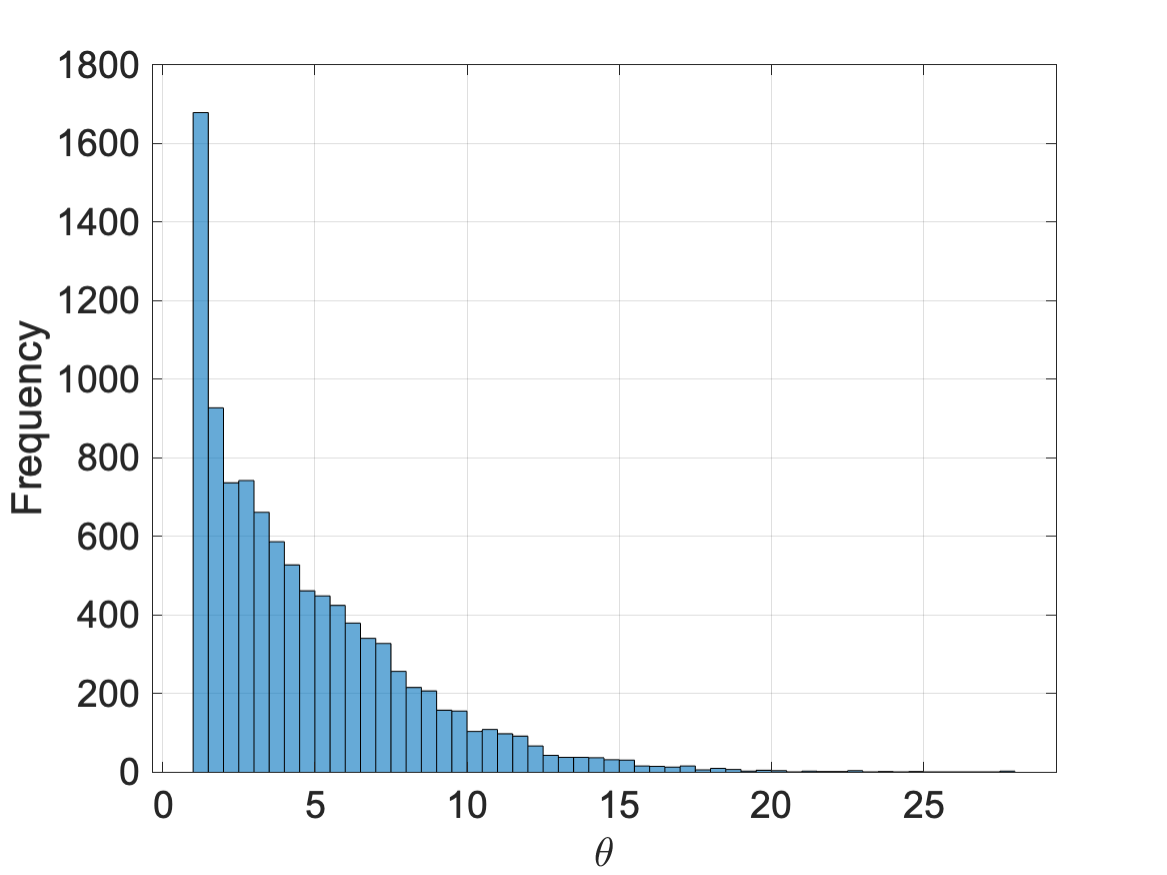}
        \caption{  $N=1 \; (n/32)$}
    \end{subfigure}
    \begin{subfigure}{.19\textwidth}
        \centering
        \includegraphics[trim=10 5 40 25,clip,width=0.95\linewidth]{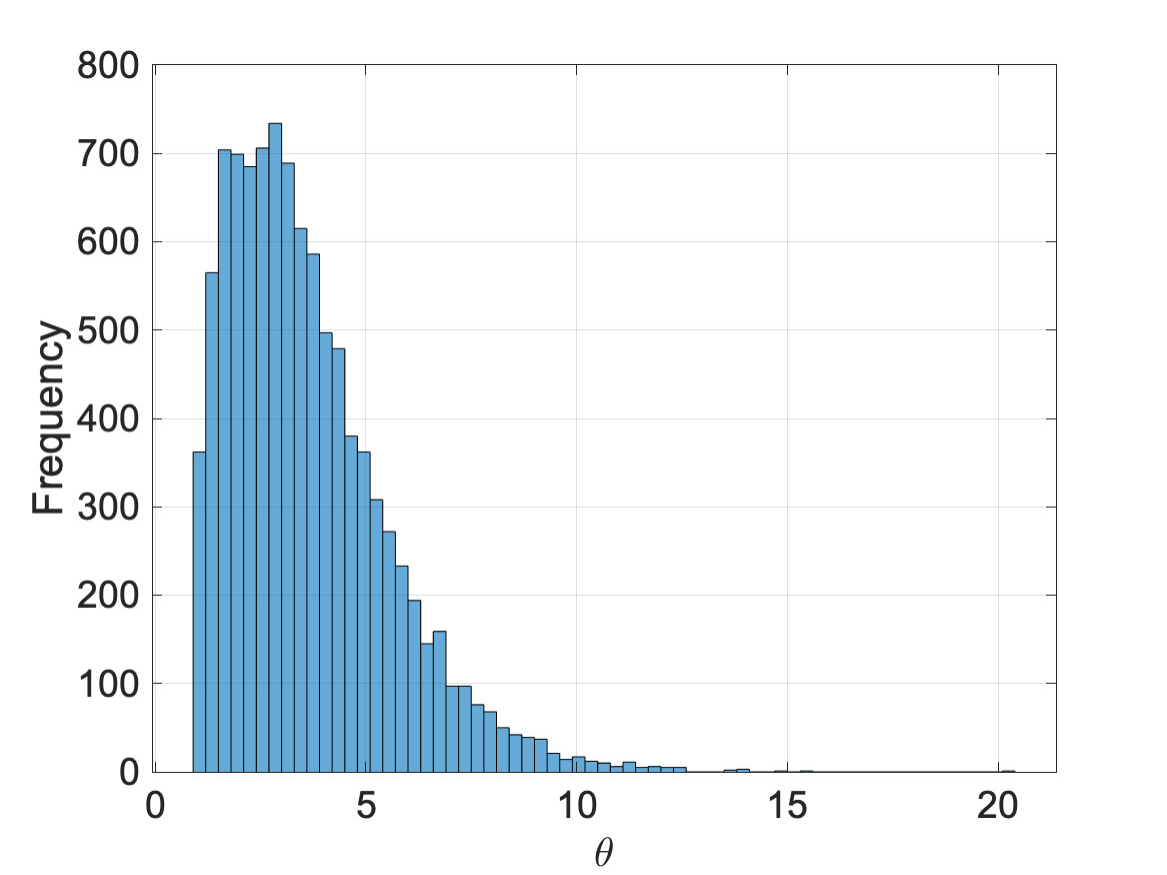}
        \caption{  $N=2 \; (n/16)$}
    \end{subfigure}
    \begin{subfigure}{.19\textwidth}
        \centering
        \includegraphics[trim=10 5 40 25,clip,width=0.95\linewidth]{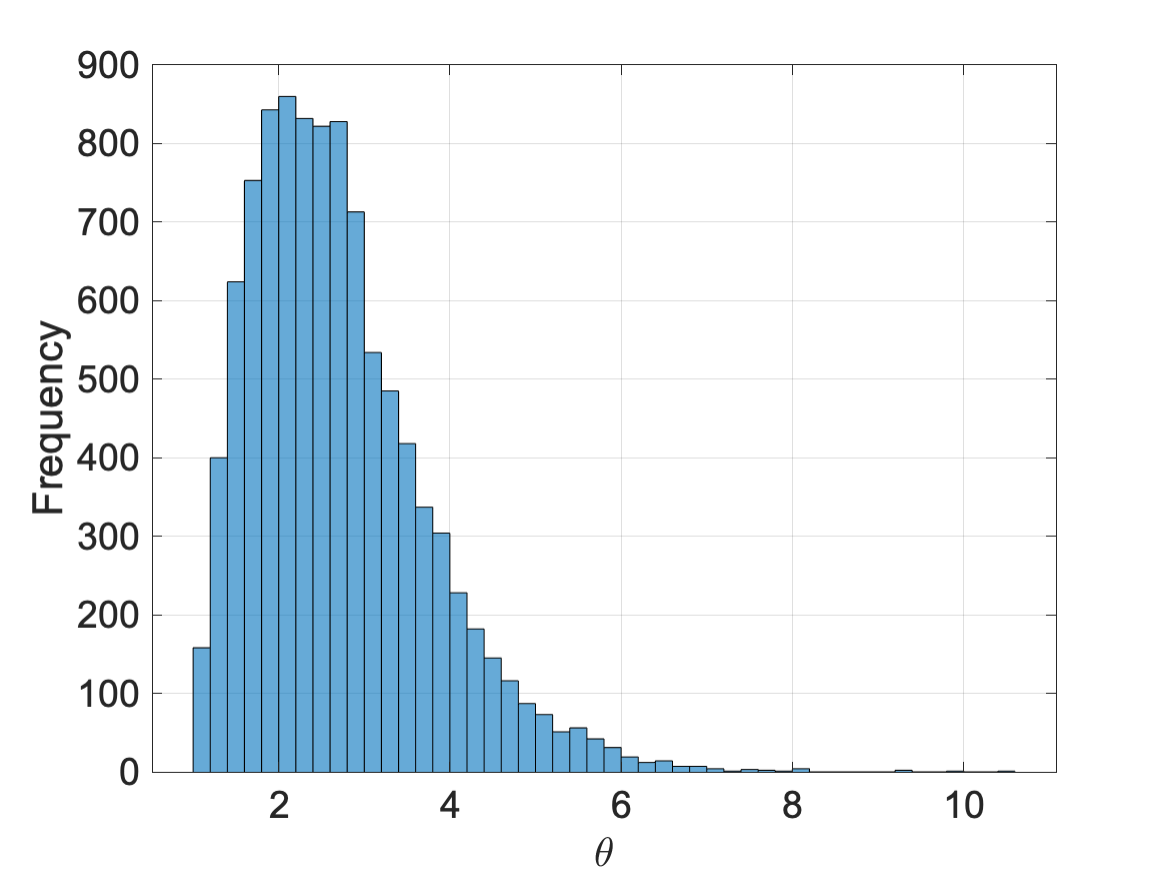}
        \caption{  $N=4 \; (n/8)$}
    \end{subfigure}
    \begin{subfigure}{.19\textwidth}
        \centering
        \includegraphics[trim=10 5 40 25,clip,width=0.95\linewidth]{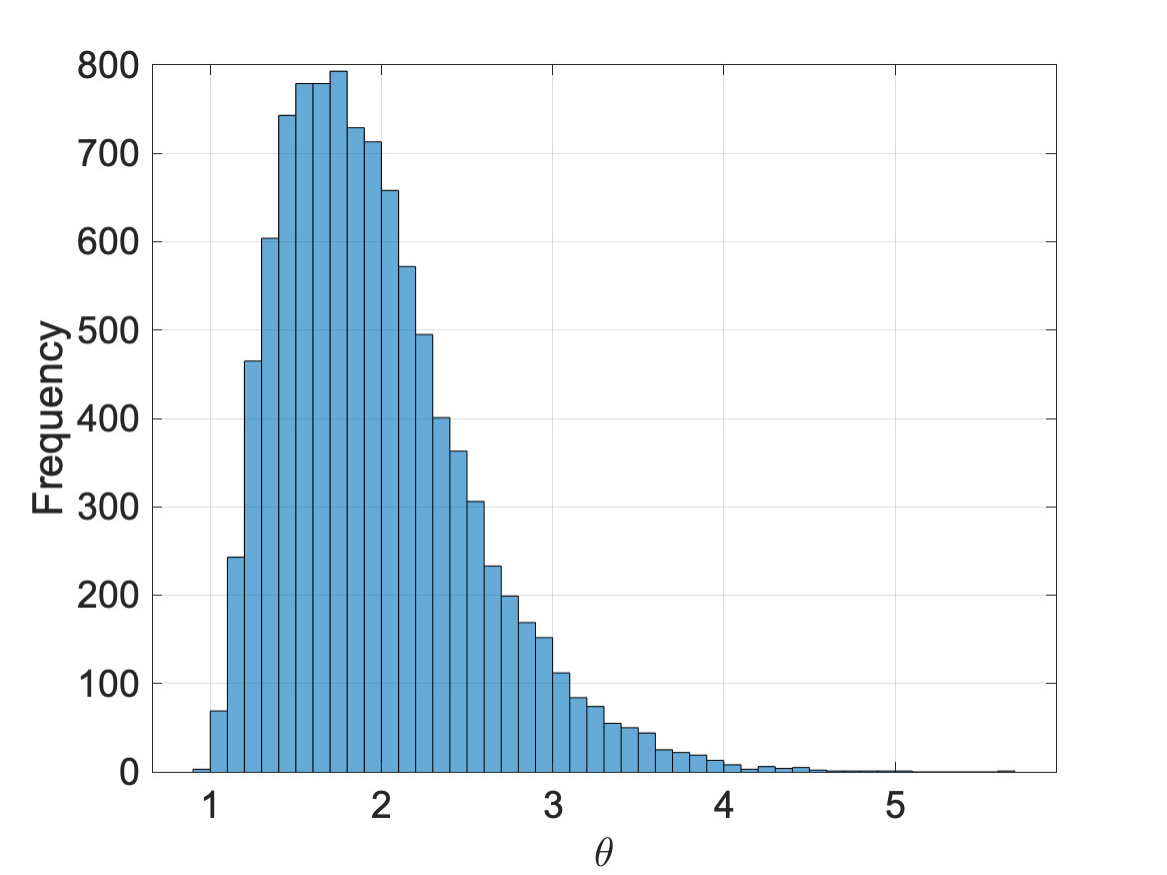}
        \caption{  $N=8 \; (n/4)$}
    \end{subfigure}
    \begin{subfigure}{.19\textwidth}
        \centering
        \includegraphics[trim=10 5 40 25,clip,width=0.95\linewidth]{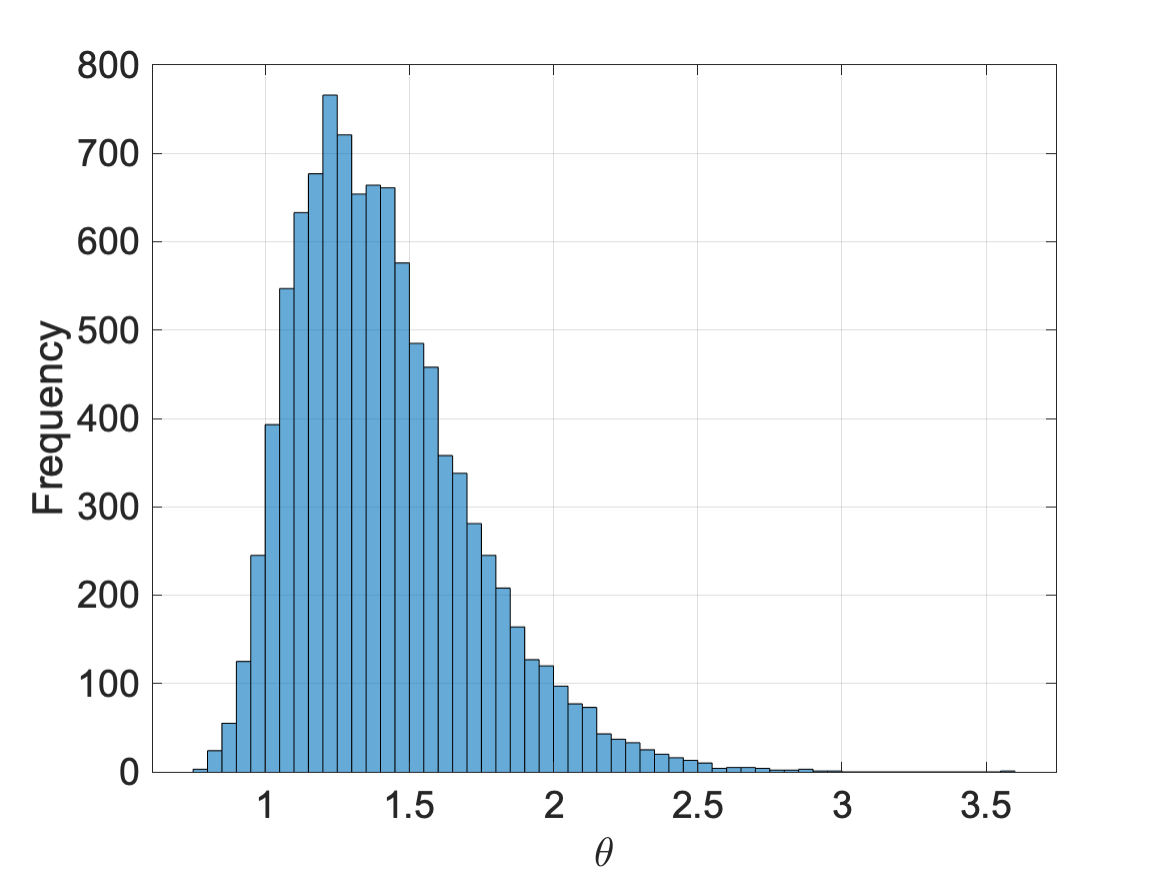}
        \caption{  $N=16 \; (n/2)$}
    \end{subfigure}
    
    \vspace{0.25cm}
     \begin{subfigure}{.19\textwidth}
        \centering
        \includegraphics[trim=10 5 40 25,clip,width=0.95\linewidth]{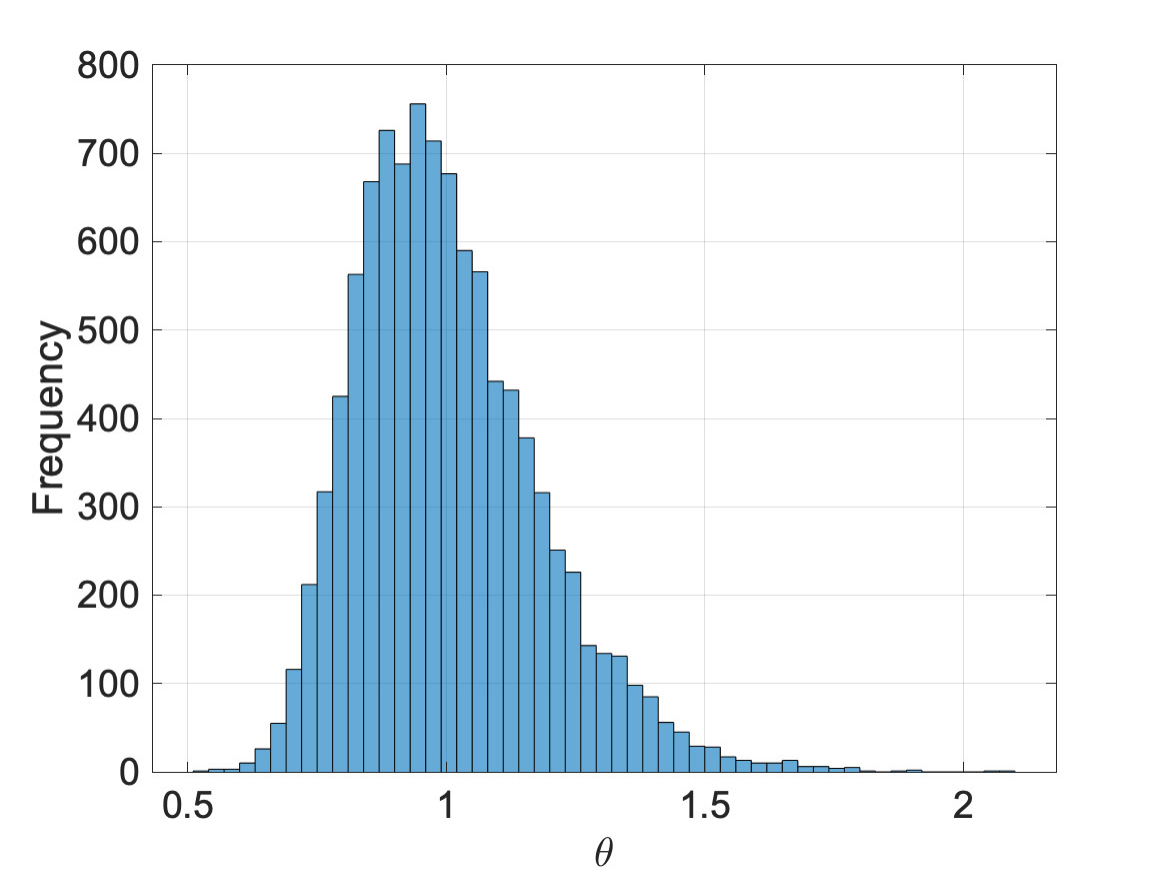}
        \caption{  $N=32 \; (n)$}
    \end{subfigure}
    \begin{subfigure}{.19\textwidth}
        \centering
        \includegraphics[trim=10 5 40 25,clip,width=0.95\linewidth]{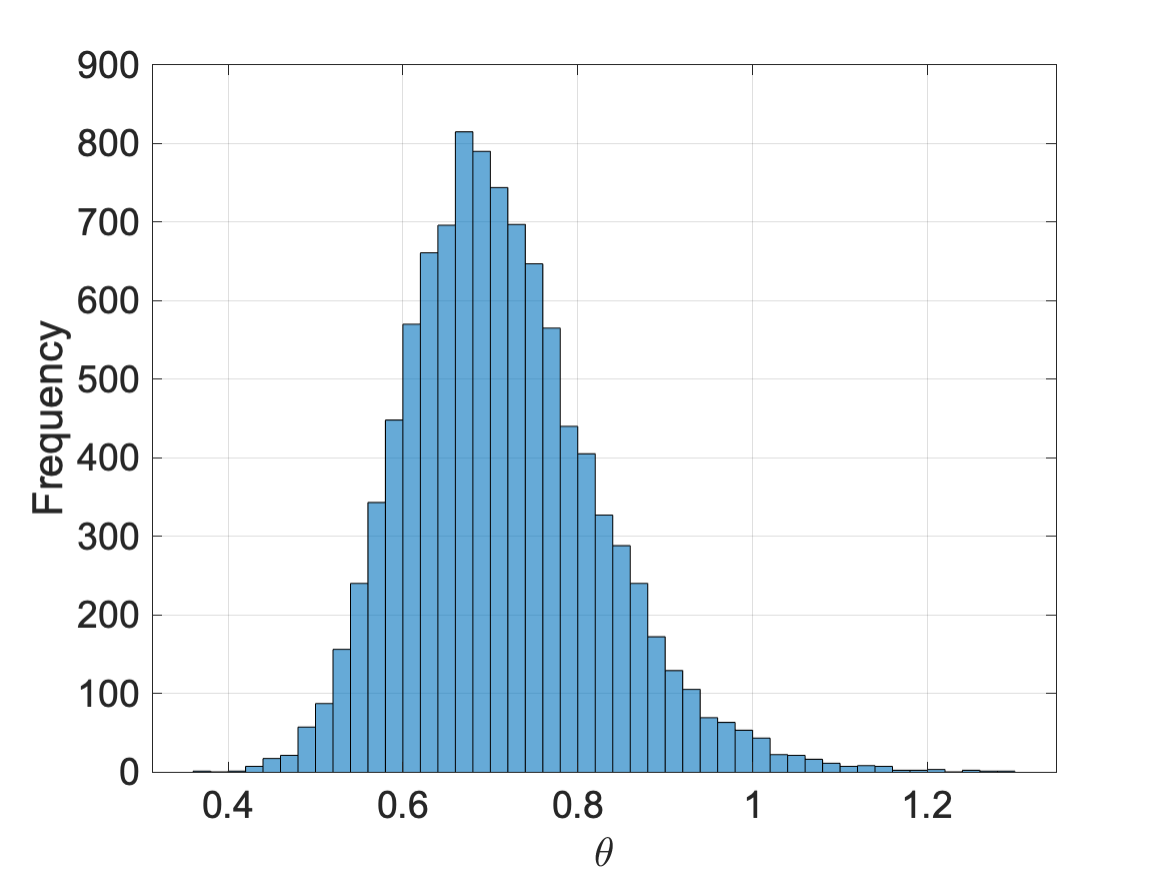}
        \caption{  $N=64 \; (2n)$}
    \end{subfigure}
    \begin{subfigure}{.19\textwidth}
        \centering
        \includegraphics[trim=10 5 40 25,clip,width=0.95\linewidth]{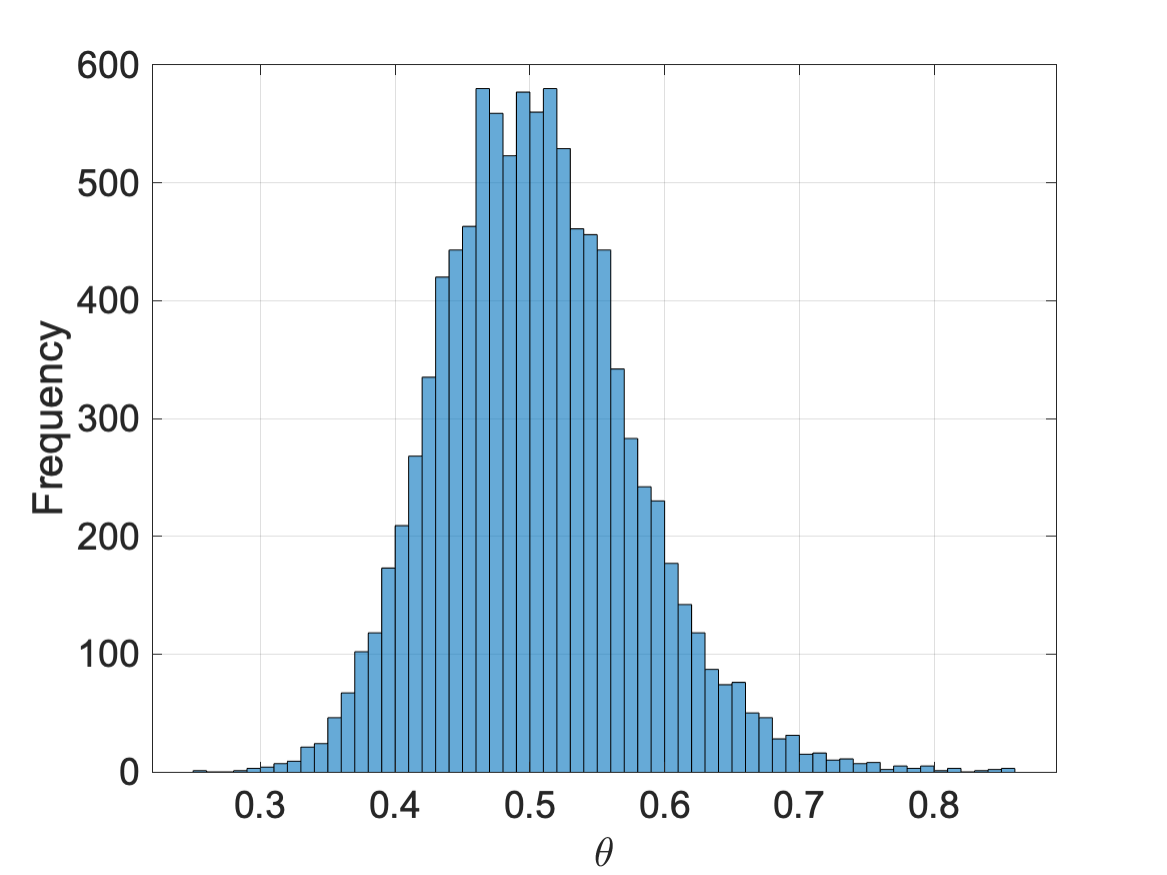}
        \caption{  $N=128 \; (4n)$}
    \end{subfigure}
    \begin{subfigure}{.19\textwidth}
        \centering
        \includegraphics[trim=10 5 40 25,clip,width=0.95\linewidth]{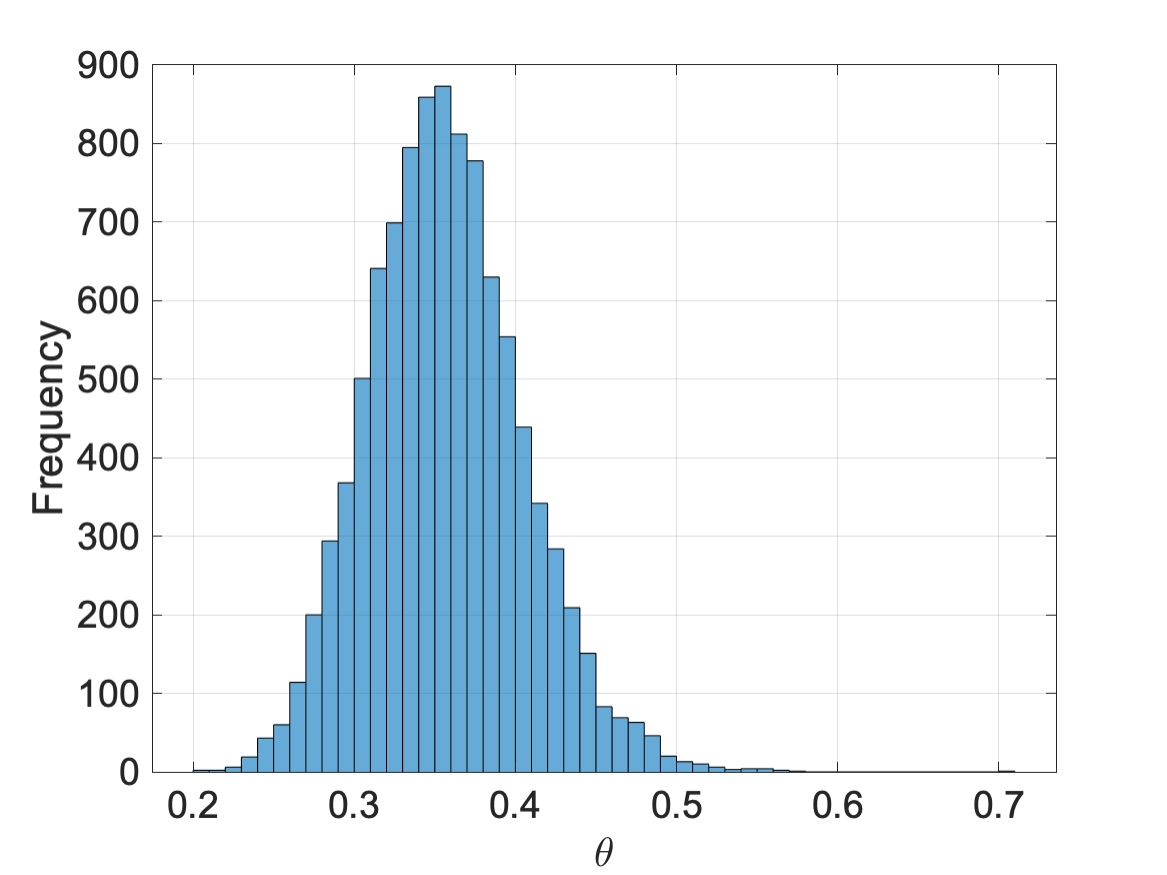}
        \caption{  $N=256 \; (8n)$}
    \end{subfigure}
    \begin{subfigure}{.19\textwidth}
        \centering
        \includegraphics[trim=10 5 40 25,clip,width=0.95\linewidth]{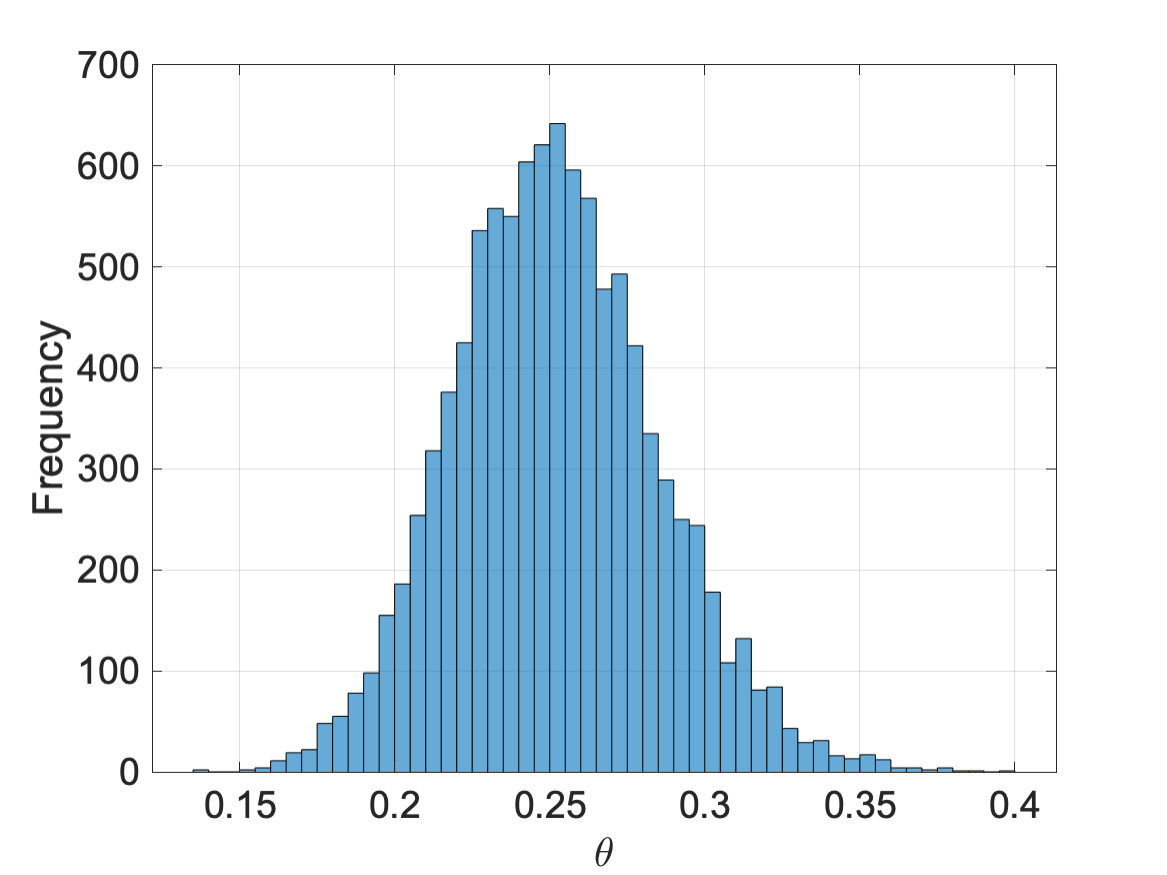}
        \caption{  $N=512 \; (16n)$}
    \end{subfigure}
    
     \vspace{0.25cm}
    \begin{subfigure}{.35\textwidth}
        \centering
        \includegraphics[trim=20 5 40 25,clip,width=0.95\linewidth]{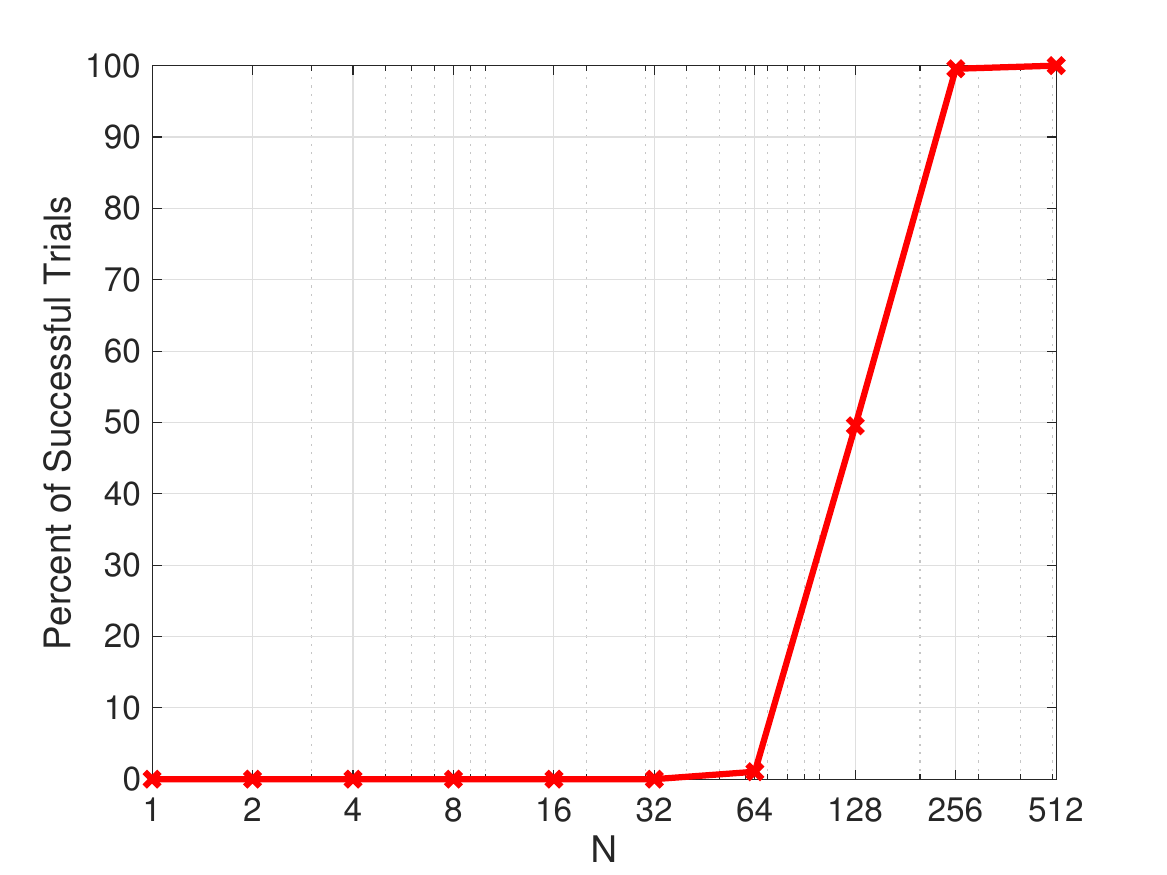}
        \caption{  Percent of successful trials vs. $N$ \label{fig1k}}
    \end{subfigure}
    \begin{subfigure}{.51\textwidth}
    \scriptsize
        \centering
\begin{tabular}{ccccc}
\toprule
$\pmb{N}$   & \textbf{Mean ($\pmb{\theta}$)} & \textbf{Median ($\pmb{\theta}$)} & \textbf{Variance ($\pmb{\theta}$)} & \begin{tabular}[c]{@{}c@{}}\textbf{Percent of} \\ \textbf{Successful} \\ \textbf{Trials}\end{tabular} \\ \midrule
1 ($n/32$)  & 4.62            & 3.69              & 11.44               & 0                                                                          \\ \hdashline
2  ($n/16$) & 3.62            & 3.23              & 3.67                & 0                                                                          \\ \hdashline
4   ($n/8$) & 2.70            & 2.53              & 1.07                & 0                                                                          \\ \hdashline
8   ($n/4$) & 1.96            & 1.87              & 0.31                & 0                                                                          \\ \hdashline
16  ($n/2$) & 1.41            & 1.36              & 0.093               & 0                                                                          \\ \hdashline
32  ($n$) & 1.00            & 0.98              & 0.032               & 0                                                                          \\ \hdashline
64 ($2n$) & 0.71            & 0.70              & 0.012               & 1.04                                                                       \\ \hdashline
128 ($4n$) & 0.50            & 0.50              & 0.0051              & 49.53                                                                      \\ \hdashline
256 ($8n$) & 0.36            & 0.35              & 0.0023              & 99.56                                                                      \\ \hdashline
512 ($16n$) & 0.25            & 0.25              & 0.0011              & 100                                                                  \\      \bottomrule
\end{tabular}
  \caption{ Summary of Results \label{tab1l}}
    \end{subfigure}

    \caption{Distribution of $\theta$ for $\phi(x)=e^\intercal x$, at $x=e$, where $e$ is a vector of all ones, and  $n=32$.  }    
    \label{fig:lowerbound}
\end{figure}

\subsection{Gradient Estimation via Smoothing on a Sphere} \label{sec:ball_smooth}

Similar to the Gaussian smoothing technique, one can also smooth the function $f$ with a uniform distribution on a ball, i.e.,  
\begin{align} \label{eq:Fball} 
F(x) &=  \mathbb{E}_{y \sim \mathcal{U}(\mathcal{B}(x, \sigma))} [f(y)] = \int_{\mathcal{B}(x, \sigma)} f(y) \frac{1}{V_n(\sigma)} dy \nonumber \\
&= \mathbb{E}_{u \sim \mathcal{U}(\mathcal{B}(0, 1))} [f(x+\sigma u)] = \int_{\mathcal{B}(0, 1)} f(x + \sigma u) \frac{1}{V_n(1)} d u,
\end{align} 
where $\mathcal{U}(\mathcal{B}(x, \sigma))$ denotes the multivariate uniform distribution on a ball  of radius $\sigma$ centered at $x$ and $\mathcal{U}(\mathcal{B}(0, 1))$ denotes the multivariate uniform distribution on a ball of radius $1$ centered at $0$. The function $V_n(\sigma)$ represents the volume of a ball  in $\R^n$ of radius $\sigma$. It was shown in \cite{flaxman2005online} that the gradient of $F$ can be expressed as
\begin{align*} 
	\nabla F(x) = \frac{n}{\sigma} \mathbb{E}_{u \sim \mathcal{U}(\mathcal{S}(0, 1))} [f(x+\sigma u) u], 
\end{align*}
where $\mathcal{S}(0, 1)$ represents a unit sphere of radius $1$ centered at $0$. This leads to three ways of approximating the gradient with only function evaluations using sample average approximations 
\begin{align}
g(x) &= \frac{n}{N\sigma} \sum_{i=1}^N f(x+\sigma u_i) u_i, \label{eq:ncBSG} \\ 
g(x) &= \frac{n}{N} \sum_{i=1}^N \frac{f(x+\sigma u_i) - f(x)}{\sigma} u_i, \label{eq:BSG} \\
g(x) &= \frac{n}{N} \sum_{i=1}^N \frac{f(x+\sigma u_i) - f(x-\sigma u_i)}{2\sigma} u_i, \label{eq:cBSG}
\end{align}
with $N$ independently and identically distributed random vectors $\{u_i\}_{i=1}^N$ following a uniform distribution on the unit sphere. Similar to the case with Gaussian smoothing, the variance of $\eqref{eq:ncBSG}$ explodes when $\sigma$ goes to zero, and thus we do not consider this formula. We analyze \eqref{eq:BSG}, which we refer to as ball smoothed gradient (BSG) and \eqref{eq:cBSG} which we refer to as central  BSG (cBSG). 

Again, as in the Gaussian smoothed case, there are two sources of error in the gradient approximations, and namely,
\begin{align} \label{eq:2terms_sphere} 
	\| g(x) - \nabla \phi(x) \|  \leq \| \nabla F(x) - \nabla \phi(x)\| + \| g(x) - \nabla F(x)\|.
\end{align} 
Let Assumption \ref{assum:bounded_noise} hold. One can bound the first term as follows; if the function $\phi$ has $L$-Lipschitz continuous gradients, that is if Assumption \ref{assum:lip_cont} holds, then
\begin{align}	\label{eq:BSG_bound1}
	\|\nabla F(x) - \nabla \phi(x)\| \le L\sigma  + \frac{n \epsilon_f}{\sigma},
\end{align}
and if the function $\phi$ has $M$-Lipschitz continuous Hessians, that is if Assumption \ref{assum:lip_cont_hess} holds, then
\begin{align}	\label{eq:cBSG_bound1}
	\|\nabla F(x) - \nabla \phi(x)\| \le M\sigma^2 + \frac{n \epsilon_f}{\sigma} .
\end{align}
The proofs are given in Appendices \ref{app:BSG_bound1} and \ref{app:cBSG_bound1}, respectively. 

For the second error term in \eqref{eq:2terms_sphere}, similar to the case of Gaussian smoothing, we begin with the variance of $g(x)$. The variance of \eqref{eq:BSG} can be expressed as
\begin{align} \label{eq:var BSG} 
	\Var{g(x)} = \frac{n^2}{N} \mathbb{E}_{u \sim \mathcal{U}(\mathcal{S}(0,1))} \left[\left(\frac{f(x+\sigma u) - f(x)}{\sigma} \right)^2 u u^\intercal \right] - \frac{1}{N} \nabla F(x) \nabla F(x)^\intercal,
\end{align} 
and the variance of \eqref{eq:cBSG} can be expressed as
\begin{align} \label{eq:var cBSG} 
\Var{g(x)} = \frac{n^2}{N} \mathbb{E}_{u \sim \mathcal{U}(\mathcal{S}(0,1))} \left[ \left(\frac{f(x+\sigma u) - f(x-\sigma u)}{2\sigma} \right)^2 u u^\intercal\right] - \frac{1}{N} \nabla F(x) \nabla F(x)^\intercal .
\end{align}

For a random variable $u \in \mathbb{R}^n$ that is uniformly distributed on the unit sphere $\mathcal{S}(0,1)$ $\subset \R^n$, we have 
\begin{equation} \label{eq:uos} \everymath{\displaystyle} \begin{aligned} 
	&\mathbb{E}_{u \sim \mathcal{U}(\mathcal{S}(0,1))} \left[(a^\intercal u)^2 u u^\intercal \right]= \frac{a^\intercal a I + 2 a a^\intercal}{n(n+2)} \\
	&\mathbb{E}_{u \sim \mathcal{U}(\mathcal{S}(0,1))} \left[a^\intercal u \|u\|^k uu^\intercal \right]= 0_{n\times n} \text{ for } k = 0,1,2,...  \\
	&\mathbb{E}_{u \sim \mathcal{U}(\mathcal{S}(0,1))} \left[ \|u\|^k u u^\intercal \right]  = \frac{1}{n} I \text{ for } k = 0,1,2,... , 
\end{aligned} \end{equation}
where $a \in \R^n$ is any constant vector; see Appendix~\ref{app:uos} for derivations. We now provide bounds for the variances of BSG and cBSG under the assumption of Lipschitz continuous gradients and Hessians, respectively. 

\begin{lemma}  \label{lem:BSG var bound}
Under Assumption  \ref{assum:lip_cont}, if $g(x)$ is calculated by \eqref{eq:BSG}, then, for all $x \in \mathbb{R}^n$, $\Var{g(x)} \preceq \kappa(x) I$ where 
\begin{align*}		
	\kappa(x) =  \frac{3}{N} \left( \frac{3n}{n+2} \|\nabla\phi(x)\|^2 + \frac{nL^2 \sigma^2}{4} + \frac{4n\epsilon_f^2}{\sigma^2} \right).
\end{align*} 

Alternatively, under Assumption  \ref{assum:lip_cont_hess}, if $g(x)$ is calculated by  \eqref{eq:cBSG}, then, for all $x \in \mathbb{R}^n$, $\Var{g(x)} \preceq \kappa(x) I$ where
\begin{align*}		
	\kappa(x) = \frac{3}{N} \left( \frac{3n}{n+2} \|\nabla\phi(x)\|^2 + \frac{nM^2 \sigma^4}{36} + \frac{n\epsilon_f^2}{\sigma^2} \right).
\end{align*} 
\end{lemma}

\begin{proof} 
Analoguous to the proof of Lemma~\ref{lemma:bnd_var}, we derive from \eqref{eq:var BSG} to get 
\begin{align*}
	\omit\rlap{$\Var{g(x)}$} \\
	\preceq& \frac{3n^2}{N} \mathbb{E}_{u \sim \mathcal{U}(\mathcal{S}(0,1))} \left[\left(\frac{\phi(x+\sigma u) - \phi(x) - \sigma \nabla\phi(x)^\intercal u}{\sigma} \right)^2 u u^\intercal + \left(\frac{\epsilon(x+\sigma u) - \epsilon(x)}{\sigma} \right)^2 u u^\intercal+ (\nabla\phi(x)^\intercal u)^2 uu^\intercal \right] \\
	\preceq& \frac{3n^2}{N} \mathbb{E}_{u \sim \mathcal{U}(\mathcal{S}(0,1))} \left[\left(\frac{L\sigma}{2} u^\intercal u\right)^2 u u^\intercal + \left(\frac{2\epsilon_f}{\sigma} \right)^2 u u^\intercal + \left(\nabla\phi(x)^\intercal u\right)^2 u u^\intercal \right] \\
	\stackrel{\mathclap{\mathrm{\eqref{eq:uos}}}}{=}{}& \ \frac{3n^2}{N} \left( \frac{L^2 \sigma^2}{4n} I + \frac{4\epsilon_f^2}{\sigma^2n} I + \frac{\|\nabla\phi(x)\|^2}{n(n+2)} I + \frac{2}{n(n+2)} \nabla\phi(x) \nabla\phi(x)^\intercal \right) \\
	\preceq& \frac{3}{N} \left( \frac{nL^2 \sigma^2}{4} + \frac{4n\epsilon_f^2}{\sigma^2} + \frac{3n}{n+2} \|\nabla\phi(x)\|^2 \right) I.
\end{align*}
For cBSG, by \eqref{eq:var cBSG} we have 
\begin{align*}
	\omit\rlap{$\Var{g(x)}$} \\
	\preceq& \frac{3n^2}{N} \mathbb{E}_{u \sim \mathcal{U}(\mathcal{S}(0,1))} \left[\left(\frac{\phi(x+\sigma u) - \phi(x-\sigma u) - 2\sigma \nabla\phi(x)^\intercal u}{2\sigma} \right)^2 u u^\intercal + \left(\frac{\epsilon(x+\sigma u) - \epsilon(x)}{2\sigma} \right)^2 u u^\intercal+ (\nabla\phi(x)^\intercal u)^2 uu^\intercal \right] \\
	\preceq& \frac{3n^2}{N} \mathbb{E}_{u \sim \mathcal{U}(\mathcal{S}(0,1))} \left[\left(\frac{M\sigma^2}{6} \|u\|^3 \right)^2 u u^\intercal + \left(\frac{2\epsilon_f}{2\sigma} \right)^2 u u^\intercal + (\nabla\phi(x)^\intercal u)^2 uu^\intercal \right] \\
	\stackrel{\mathclap{\mathrm{\eqref{eq:uos}}}}{=}{}& \ \frac{3n^2}{N} \left( \frac{M^2 \sigma^4}{36n} I + \frac{\epsilon_f^2}{\sigma^2n} I + \frac{\|\nabla\phi(x)\|^2}{n(n+2)} I + \frac{2}{n(n+2)} \nabla\phi(x) \nabla\phi(x)^\intercal \right) \\
	\preceq& \frac{3}{N} \left( \frac{nM^2 \sigma^4}{36} + \frac{n\epsilon_f^2}{\sigma^2} + \frac{3n}{n+2} \|\nabla\phi(x)\|^2 \right) I. 
\end{align*}
\end{proof}


Using the results of Lemma \ref{lem:BSG var bound}, we can bound the quantity $\| g(x) - \nabla F(x) \|$ in \eqref{eq:2terms_sphere}, with probability $1-\delta$, using Chebyshev's inequality, just as we did in the case of GSG. However, ball smoothed gradient approach has a significant advantage over Gaussian smoothing in that
it allows the use of Bernstein's  inequality \cite[Theorem 6.1.1]{tropp2015introduction} instead of Chebychev's and the resulting bound on $N$ has a significantly improved dependence on the probability $\delta$. 

Bernstein's inequality applies here because, unlike GSG (and cGSG), BSG (and cBSG) enjoys a deterministic bound on the error term $n \frac{f(x+\sigma {u}) - f(x)}{\sigma} {u}  - F(x)$; see proof of Lemma \ref{lem:prob_bnd_sphere_smoothed_bern}.

\begin{lemma} 	\label{lem:prob_bnd_sphere_smoothed_bern}
Let $F$ be a ball smoothed approximation of $f$ \eqref{eq:Fball}. Under Assumption  \ref{assum:lip_cont}, if $g(x)$ is  calculated via \eqref{eq:BSG} with sample size
\begin{align*}
	N \ge \left[ \frac{6n^2}{r^2} \left( \frac{\|\nabla \phi(x)\|^2}{n} + \frac{L^2\sigma^2}{4} + \frac{4\epsilon_f^2}{\sigma^2} \right)  + \frac{2n}{3r} \left( 2 \|\nabla \phi(x)\| + L \sigma + \frac{4\epsilon_f}{\sigma} \right) \right] \log \frac{n+1}{\delta},
\end{align*}
then, for all $x \in \mathbb{R}^n$, $\|g(x) - \nabla F(x)\| \le r$ holds with probability at least $1 - \delta$, for any $r>0$ and $0< \delta <1$. 

Alternatively, under Assumption  \ref{assum:lip_cont_hess} if $g(x)$ is calculated via \eqref{eq:cBSG} with sample size $2N$ where
\begin{align*}
	N \ge \left[ \frac{6n^2}{r^2} \left( \frac{\|\nabla \phi(x)\|^2}{n} + \frac{M^2\sigma^4}{36} + \frac{\epsilon_f^2}{\sigma^2}\right) + \frac{2n}{3r} \left( 2 \|\nabla \phi(x)\| + \frac{M \sigma^2}{3} + \frac{2 \epsilon_f}{\sigma} \right) \right] \log \frac{n+1}{\delta}, 
\end{align*}
then, for all $x \in \mathbb{R}^n$, $\|g(x) - \nabla F(x)\| \le r$ holds with probability at least $1 - \delta$, for any $r>0$ and $0< \delta <1$.
\end{lemma}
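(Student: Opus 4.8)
The plan is to recognize that $g(x)-\nabla F(x)$ is the normalized sum of $N$ independent, identically distributed, mean-zero random vectors, and then to apply a Bernstein-type concentration inequality — the crucial point being that, in contrast with the Gaussian case of Lemma~\ref{lem:prob_bnd_smoothed} (where $\|u\|$ is unbounded and only Chebyshev was available), on the unit sphere each summand is \emph{almost surely bounded}. Concretely, set $Z_i = n\,\tfrac{f(x+\sigma u_i)-f(x)}{\sigma}\,u_i$ for BSG (respectively $Z_i = n\,\tfrac{f(x+\sigma u_i)-f(x-\sigma u_i)}{2\sigma}\,u_i$ for cBSG), so that $g(x)=\tfrac1N\sum_{i=1}^N Z_i$, $\mathbb{E}[Z_i]=\nabla F(x)$, and $g(x)-\nabla F(x)=\tfrac1N\sum_{i=1}^N Y_i$ with $Y_i:=Z_i-\nabla F(x)$ i.i.d.\ and $\mathbb{E}[Y_i]=0$. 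Concentration of a sum of \emph{vectors} in $\mathbb{R}^n$ is obtained by applying the matrix Bernstein inequality (as in \cite{tripuraneni2017stochastic}) to the self-adjoint dilations $\bigl(\begin{smallmatrix} 0 & Y_i^\intercal \\ Y_i & 0\end{smallmatrix}\bigr)\in\mathbb{R}^{(n+1)\times(n+1)}$, whose operator norm equals $\|Y_i\|$; this is precisely the source of the factor $n+1$ inside $\log\tfrac{n+1}{\delta}$.

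Bernstein requires two ingredients: a uniform almost-sure bound $\|Y_i\|\le R$, and a variance proxy $v$ with $\max\{\,\|\sum_i\mathbb{E}[Y_iY_i^\intercal]\|,\ \sum_i\mathbb{E}[\|Y_i\|^2]\,\}\le v$. For the first: since $\|u_i\|=1$, Assumption~\ref{assum:lip_cont} gives $|\phi(x+\sigma u_i)-\phi(x)-\sigma\nabla\phi(x)^\intercal u_i|\le\tfrac{L\sigma^2}{2}$, and Assumption~\ref{assum:bounded_noise} gives $|\epsilon(x+\sigma u_i)-\epsilon(x)|\le2\epsilon_f$, so $\|Z_i\|\le n\bigl(\|\nabla\phi(x)\|+\tfrac{L\sigma}{2}+\tfrac{2\epsilon_f}{\sigma}\bigr)$; centering at most doubles this, yielding $R=n\bigl(2\|\nabla\phi(x)\|+L\sigma+\tfrac{4\epsilon_f}{\sigma}\bigr)$, the second term in the stated bound on $N$. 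For the variance proxy, $\sum_i\mathbb{E}[\|Y_i\|^2]=N\bigl(\mathbb{E}[\|Z_1\|^2]-\|\nabla F(x)\|^2\bigr)\le N\,\mathbb{E}[\|Z_1\|^2]=N n^2\,\mathbb{E}_{u\sim\mathcal{U}(\mathcal{S}(0,1))}\!\bigl[\bigl(\tfrac{f(x+\sigma u)-f(x)}{\sigma}\bigr)^2\bigr]$; writing the difference quotient as $\nabla\phi(x)^\intercal u$ plus a remainder bounded in absolute value by $\tfrac{L\sigma}{2}+\tfrac{2\epsilon_f}{\sigma}$, and using $\mathbb{E}[(\nabla\phi(x)^\intercal u)^2]=\tfrac{\|\nabla\phi(x)\|^2}{n}$ together with the relations \eqref{eq:uos} (on the sphere $\|u\|$ is constant, which makes the relevant cross terms drop out), one gets $\mathbb{E}[\|Z_1\|^2]\le n^2\bigl(\tfrac{\|\nabla\phi(x)\|^2}{n}+\tfrac{L^2\sigma^2}{4}+\tfrac{4\epsilon_f^2}{\sigma^2}+2L\epsilon_f\bigr)$; since this dominates the operator-norm variance bound already supplied by Lemma~\ref{eq:BSG var bound}, we may take $v=N n^2\bigl(\tfrac{\|\nabla\phi(x)\|^2}{n}+\tfrac{L^2\sigma^2}{4}+\tfrac{4\epsilon_f^2}{\sigma^2}+2L\epsilon_f\bigr)=:Nv_1$.

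With these inputs, matrix Bernstein gives, for any $t>0$,
\[
\mathbb{P}\Bigl(\bigl\|\textstyle\sum_{i=1}^N Y_i\bigr\|\ge t\Bigr)\ \le\ (n+1)\exp\!\Bigl(\frac{-t^2/2}{v+Rt/3}\Bigr).
\]
Taking $t=Nr$ (so the event is exactly $\|g(x)-\nabla F(x)\|\ge r$) and writing $v=Nv_1$, the right-hand side is at most $\delta$ as soon as $\tfrac{Nr^2/2}{v_1+Rr/3}\ge\log\tfrac{n+1}{\delta}$, i.e.\ $N\ge\bigl(\tfrac{2v_1}{r^2}+\tfrac{2R}{3r}\bigr)\log\tfrac{n+1}{\delta}$; substituting the expressions for $v_1$ and $R$ recovers exactly the stated lower bound. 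The cBSG case is identical under Assumption~\ref{assum:lip_cont_hess}: one replaces the first-order Taylor estimate by the symmetric-difference estimate $|\phi(x+\sigma u)-\phi(x-\sigma u)-2\sigma\nabla\phi(x)^\intercal u|\le\tfrac{M\sigma^3}{3}$ (odd-order terms cancel), so $\tfrac{f(x+\sigma u)-f(x-\sigma u)}{2\sigma}$ is $\nabla\phi(x)^\intercal u$ plus a remainder bounded by $\tfrac{M\sigma^2}{6}+\tfrac{\epsilon_f}{\sigma}$, yielding $R=n\bigl(2\|\nabla\phi(x)\|+\tfrac{M\sigma^2}{3}+\tfrac{2\epsilon_f}{\sigma}\bigr)$ and $v_1=n^2\bigl(\tfrac{\|\nabla\phi(x)\|^2}{n}+\tfrac{M^2\sigma^4}{36}+\tfrac{\epsilon_f^2}{\sigma^2}+\tfrac{M\sigma\epsilon_f}{3}\bigr)$, and the same Bernstein-and-solve step finishes.

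I expect the main obstacle to be bookkeeping rather than conceptual depth: pinning down the variance proxy so the stated constants come out exactly — in particular, deciding between the operator-norm and the second-moment (trace) forms in the matrix-Bernstein variance term, and carefully controlling the cross terms between the linear part $\nabla\phi(x)^\intercal u$ and the Taylor/noise remainders, where the sphere's property $\|u\|\equiv1$ combined with \eqref{eq:uos} is what keeps things clean — together with checking that the dilation argument indeed produces the $n+1$ inside the logarithm. The one genuinely substantive point, and the reason this lemma can invoke Bernstein at all (unlike its Gaussian analogue), is the almost-sure boundedness of $n\,\tfrac{f(x+\sigma u)-f(x)}{\sigma}\,u$ on the unit sphere.
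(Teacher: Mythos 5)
Your proposal is correct and follows essentially the same route as the paper: bound each (centered) summand almost surely by $\tfrac{n}{N}\bigl(2\|\nabla\phi(x)\|+L\sigma+\tfrac{4\epsilon_f}{\sigma}\bigr)$ using $\|u\|=1$ plus the Taylor and noise bounds, take the matrix variance statistic (with the second-moment term dominating, exactly as you note), apply Bernstein's inequality to get the $(n+1)\exp(\cdot)$ tail, and solve for $N$. The only differences are cosmetic — you make the dilation argument behind the $n+1$ factor explicit where the paper just cites \cite{tripuraneni2017stochastic}, and you write $\|\nabla\phi(x)\|$ where the paper's statement has the apparent typo $\|\nabla f(x)\|$.
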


\begin{proof}
We first note that
\begin{align*}
\mathbb{E}_{u \sim \mathcal{U}(\mathcal{S}(0,1))} \left[\frac{n}{N} \frac{f(x+\sigma u) - f(x)}{\sigma} u - \frac{1}{N} \nabla F(x)\right] &= 0, 
\end{align*}
and 
\begin{align*}
&\left\| \frac{n}{N} \frac{f(x+\sigma {u}) - f(x)}{\sigma} {u}  - \frac{1}{N} \nabla F(x) \right\| \\
&\qquad ={} \left\| \frac{n}{N} \frac{f(x+\sigma {u}) - f(x)}{\sigma} {u}  - \frac{n}{N} \mathbb{E}_{v \sim \mathcal{U}(\mathcal{S}(0,1))}\left[ \frac{f(x+\sigma v) - f(x)}{\sigma} v \right] \right\| \\
&\qquad \le{}  \frac{n}{N\sigma} \left| f(x+\sigma u) - f(x)\right|  \|u\| + \frac{n}{N\sigma} \mathbb{E}_{v \sim \mathcal{U}(\mathcal{S}(0,1))}\left[ \left| f(x+\sigma v) - f(x) \right| \|v\| \right]\\
&\qquad ={} \frac{n}{N\sigma} \left| \phi(x+\sigma u) + \epsilon(x+\sigma u) - \phi(x) - \epsilon(x) \right| \\
&\qquad \quad +  \frac{n}{N\sigma} \mathbb{E}_{v \sim \mathcal{U}(\mathcal{S}(0,1))}\left[ \left| \phi(x+\sigma v) + \epsilon(x+\sigma v) - \phi(x) - \epsilon(x) \right| \right]\\
&\qquad \le{}  \frac{n}{N\sigma} \left(|\nabla \phi(x)^\intercal \sigma {u}| + \frac{L \|\sigma {u}\|^2}{2} + 2\epsilon_f \right) + \frac{n}{N\sigma} \mathbb{E}_{v \sim \mathcal{U}(\mathcal{S}(0,1))} \left[|\nabla \phi(x)^\intercal \sigma v| + \frac{L \|\sigma v \|^2}{2} + 2\epsilon_f \right] \\
&\qquad \le{}  \frac{n}{N}(2 \|\nabla \phi(x)\| + L \sigma + \frac{4 \epsilon_f}{\sigma}),
\end{align*}
for any $u \sim \mathcal{U}(\mathcal{S}(0,1))$. 
The \emph{matrix variance statistic} of $g(x) - \nabla F(x)$ is 
\begin{align*}
&v(g(x) - \nabla F(x)) \\
&\qquad ={} \max \left\{ \| \mathbb{E}\left[(g(x) - \nabla F(x))(g(x) - \nabla F(x))^\intercal \right] \|, \mathbb{E}\left[(g(x) - \nabla F(x))^\intercal (g(x) - \nabla F(x)) \right] \right\} \\
&\qquad \le{} \max \left\{ \frac{3}{N} \left( \frac{3n}{n+2} \|\nabla\phi(x)\|^2 + \frac{nL^2 \sigma^2}{4} + \frac{4n\epsilon_f^2}{\sigma^2} \right), \frac{3n^2}{N} \left( \frac{\|\nabla \phi(x)\|^2}{n} + \frac{L^2\sigma^2}{4} + \frac{4\epsilon_f^2}{\sigma^2}\right) \right\} \\
&\qquad ={} \frac{3n^2}{N} \left( \frac{\|\nabla \phi(x)\|^2}{n} + \frac{L^2\sigma^2}{4} + \frac{4\epsilon_f^2}{\sigma^2}\right), 
\end{align*}
where the two terms in the maximization are $\|\Var{g(x)}\|$ and trace($\Var{g(x)}$). The uppper bound on these two terms are from Lemma~\ref{lem:BSG var bound}. 
Then by Bernstein's inequality, we have 
\begin{align*}
&\mathbb{P}(\|g(x) - \nabla F(x)\| \ge r) \\
&\qquad \le{} (n+1) \exp\left( \frac{-r^2/2}{v(g(x) - \nabla F(x)) + \frac{nr}{3N} (2 \|\nabla \phi(x)\| + L \sigma + \frac{4\epsilon_f}{\sigma} )} \right) \\
&\qquad \le{} (n+1) \exp\left( \frac{-r^2/2}{\frac{3n^2}{N} \left( \frac{\|\nabla \phi(x)\|^2}{n} + \frac{L^2\sigma^2}{4} + \frac{4\epsilon_f^2}{\sigma^2}\right) + \frac{nr}{3N} (2 \|\nabla \phi(x)\| + L \sigma + \frac{4\epsilon_f}{\sigma})} \right). 
\end{align*}
In order to ensure that $\mathbb{P}(\|g(x) - \nabla F(x)\| \ge r) \le \delta$, for some $\delta \in (0,1)$, we require that 
\begin{align*}
(n+1) \exp\left( \frac{-r^2/2}{\frac{3n^2}{N} \left( \frac{\|\nabla \phi(x)\|^2}{n} + \frac{L^2\sigma^2}{4} + \frac{4\epsilon_f^2}{\sigma^2}\right) + \frac{nr}{3N} (2 \|\nabla \phi(x)\| + L \sigma + \frac{4\epsilon_f}{\sigma})} \right) \le \delta, 
\end{align*}
from which we conclude that
\begin{align*}
N \ge \left[ \frac{6n^2}{r^2} \left( \frac{\|\nabla \phi(x)\|^2}{n} + \frac{L^2\sigma^2}{4} + \frac{4\epsilon_f^2}{\sigma^2} \right)  + \frac{2n}{3r} \left( 2 \|\nabla \phi(x)\| + L \sigma + \frac{4\epsilon_f}{\sigma} \right) \right] \log \frac{n+1}{\delta}.
\end{align*}

For the cBSG case, note that
\begin{align*}
\mathbb{E}_{u \sim \mathcal{U}(\mathcal{S}(0,1))} \left[\frac{n}{N} \frac{f(x+\sigma u) - f(x-\sigma u)}{2\sigma} u - \frac{1}{N} \nabla F(x)\right] &= 0, 
\end{align*}
and 
\begin{align*}
&\left\| \frac{n}{N} \frac{f(x+\sigma {u}) - f(x-\sigma {u})}{2\sigma} {u}  - \frac{1}{N} \nabla F(x) \right\| \\
&\qquad \le{}  \frac{n}{2N\sigma} \left| f(x+\sigma u) - f(x-\sigma {u})\right|  \|u\| + \frac{n}{2N\sigma} \mathbb{E}_{v \sim \mathcal{U}(\mathcal{S}(0,1))}\left[ \left| f(x+\sigma v) - f(x-\sigma v) \right| \|v\| \right]\\
&\qquad ={}  \frac{n}{2N\sigma} \left| \phi(x+\sigma u) + \epsilon(x+\sigma u) - \phi(x-\sigma u) - \epsilon(x-\sigma u) \right| \\
&\qquad \qquad +  \frac{n}{2N\sigma} \mathbb{E}_{v \sim \mathcal{U}(\mathcal{S}(0,1))}\left[ \left| \phi(x+\sigma v) + \epsilon(x+\sigma v) - \phi(x) - \epsilon(x) \right| \right]\\
&\qquad \le{}  \frac{n}{2N\sigma} \left(|2\nabla \phi(x)^\intercal \sigma u| + \frac{M \|\sigma u\|^3}{3}  + 2\epsilon_f \right) + \frac{n}{2N\sigma} \mathbb{E}_{v \sim \mathcal{U}(\mathcal{S}(0,1))} \left[|2\nabla \phi(x)^\intercal \sigma v| + \frac{M \|\sigma v\|^3}{3}  + 2\epsilon_f \right] \\
&\qquad \le{}  \frac{n}{N} \left(2 \|\nabla \phi(x)\| + \frac{M \sigma^2}{3} + \frac{2 \epsilon_f}{\sigma} \right),
\end{align*}
for any $u \sim \mathcal{U}(\mathcal{S}(0,1))$. 
The \emph{matrix variance statistic} of $g(x) - \nabla F(x)$ is 
\begin{align*}
&v(g(x) - \nabla F(x)) \\
&\qquad ={} \max \left\{ \| \mathbb{E}\left[(g(x) - \nabla F(x))(g(x) - \nabla F(x))^\intercal \right] \|, \mathbb{E}\left[(g(x) - \nabla F(x))^\intercal (g(x) - \nabla F(x)) \right] \right\} \\
&\qquad \le{} \max \left\{ \frac{3}{N} \left( \frac{3n}{n+2} \|\nabla\phi(x)\|^2 + \frac{nM^2 \sigma^4}{36} + \frac{n\epsilon_f^2}{\sigma^2} \right), \frac{3n^2}{N} \left( \frac{\|\nabla \phi(x)\|^2}{n} + \frac{M^2\sigma^4}{36} + \frac{\epsilon_f^2}{\sigma^2}\right) \right\} \\
&\qquad ={}  \frac{3n^2}{N} \left( \frac{\|\nabla \phi(x)\|^2}{n} + \frac{M^2\sigma^4}{36} + \frac{\epsilon_f^2}{\sigma^2}\right) . 
\end{align*}
By Bernstein's inequality, we have 
\begin{align*}
&\mathbb{P}(\|g(x) - \nabla F(x)\| \ge r) \\
&\qquad \le{} (n+1) \exp\left( \frac{-r^2/2}{v(g(x) - \nabla F(x)) + \frac{nr}{3N} \left(2 \|\nabla \phi(x)\| + \frac{M \sigma^2}{3} + \frac{2 \epsilon_f}{\sigma} \right)} \right) \\
&\qquad \le{} (n+1) \exp\left( \frac{-r^2/2}{\frac{3n^2}{N} \left( \frac{\|\nabla \phi(x)\|^2}{n} + \frac{M^2\sigma^4}{36} + \frac{\epsilon_f^2}{\sigma^2}\right) + \frac{nr}{3N} \left(2 \|\nabla \phi(x)\| + \frac{M \sigma^2}{3} + \frac{2 \epsilon_f}{\sigma} \right)} \right). 
\end{align*}
In order to ensure that $\mathbb{P}(\|g(x) - \nabla F(x)\| \ge r) \le \delta$, for some $\delta \in (0,1)$, we require that 
\begin{align*}
(n+1) \exp\left( \frac{-r^2/2}{\frac{3n^2}{N} \left( \frac{\|\nabla \phi(x)\|^2}{n} + \frac{M^2\sigma^4}{36} + \frac{\epsilon_f^2}{\sigma^2}\right) + \frac{nr}{3N} \left(2 \|\nabla \phi(x)\| + \frac{M \sigma^2}{3} + \frac{2 \epsilon_f}{\sigma} \right)} \right) \le \delta, 
\end{align*}
from which we conclude that
\begin{align*}
N \ge \left[ \frac{6n^2}{r^2} \left( \frac{\|\nabla \phi(x)\|^2}{n} + \frac{M^2\sigma^4}{36} + \frac{\epsilon_f^2}{\sigma^2}\right)  + \frac{2n}{3r} \left( 2 \|\nabla \phi(x)\| + \frac{M \sigma^2}{3} + \frac{2 \epsilon_f}{\sigma} \right) \right] \log \frac{n+1}{\delta}.
\end{align*}
\end{proof}

Now, with bounds for both terms in \eqref{eq:2terms_sphere}, we can bound $\|g(x) - \nabla \phi(x)\|$, in probability. 

\begin{theorem} \label{thm:prob_bnd_sphere_smoothed_bern} 
Suppose that Assumption  \ref{assum:lip_cont} holds and  $g(x)$ is calculated via \eqref{eq:BSG}. 
If 
\begin{align*}
	N \ge \left[ \frac{6n^2}{r^2} \left( \frac{\|\nabla \phi(x)\|^2}{n} + \frac{L^2\sigma^2}{4} + \frac{4\epsilon_f^2}{\sigma^2} \right)  + \frac{2n}{3r} \left( 2 \|\nabla \phi(x)\| + L \sigma + \frac{4\epsilon_f}{\sigma} \right) \right] \log \frac{n+1}{\delta},
\end{align*}
then, for all $x \in \mathbb{R}^n$ and $r>0$, 
\begin{align}	\label{eq:BSG_thm}
	\|g({x}) - \nabla \phi({x})\| \leq L \sigma + \frac{n \epsilon_f}{\sigma} + r.
\end{align}
with probability at least $1 - \delta$.

Alternatively, suppose that Assumption  \ref{assum:lip_cont_hess} holds and  $g(x)$ is calculated via \eqref{eq:cBSG}. 
If 
\begin{align*}
	N \ge \left[ \frac{6n^2}{r^2} \left( \frac{\|\nabla \phi(x)\|^2}{n} + \frac{M^2\sigma^4}{36} + \frac{\epsilon_f^2}{\sigma^2}\right) + \frac{2n}{3r} \left( 2 \|\nabla \phi(x)\| + \frac{M \sigma^2}{3} + \frac{2 \epsilon_f}{\sigma} \right) \right] \log \frac{n+1}{\delta},
\end{align*}
then, for all $x \in \mathbb{R}^n$ and $r>0$, 
\begin{align}	\label{eq:cBSG_thm}
	\|g({x}) - \nabla \phi({x})\| \leq M \sigma^2 + \frac{n \epsilon_f}{\sigma}+ r.
\end{align}
with probability at least $1 - \delta$.
\end{theorem}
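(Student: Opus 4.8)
The plan is to follow the proof of Theorem~\ref{thm:prob_bnd_smoothed} verbatim, substituting the ball-smoothing ingredients for the Gaussian ones. The starting point is the triangle-inequality decomposition \eqref{eq:2terms_sphere}, which separates the total error into a deterministic bias term $\|\nabla F(x) - \nabla \phi(x)\|$ and a stochastic sample-average term $\|g(x) - \nabla F(x)\|$:
\begin{align*}
\|g(x) - \nabla \phi(x)\| \le \|\nabla F(x) - \nabla \phi(x)\| + \|g(x) - \nabla F(x)\|.
\end{align*}

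For the first statement, assume Assumption~\ref{assum:lip_cont} and that $g(x)$ is computed by \eqref{eq:BSG}. I would bound the bias term by $L\sigma + n\epsilon_f/\sigma$ using \eqref{eq:BSG_bound1}, and then apply the first half of Lemma~\ref{lem:prob_bnd_sphere_smoothed_bern}: the stated lower bound on $N$ guarantees $\|g(x) - \nabla F(x)\| \le r$ with probability at least $1-\delta$. On that event, summing the two bounds gives exactly \eqref{eq:BSG_thm}. The second statement is proved the same way: under Assumption~\ref{assum:lip_cont_hess} with $g(x)$ computed by \eqref{eq:cBSG}, bound the bias term by $M\sigma^2 + n\epsilon_f/\sigma$ via \eqref{eq:cBSG_bound1}, bound the sampling term by $r$ (with probability $\ge 1-\delta$) via the second half of Lemma~\ref{lem:prob_bnd_sphere_smoothed_bern}, and add to obtain \eqref{eq:cBSG_thm}.

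All the substantive work is already contained in the preceding results: the variance estimates of Lemma~\ref{eq:BSG var bound} (built from the moment identities \eqref{eq:uos}), the deterministic almost-sure bound on the individual summands that makes Bernstein's inequality applicable, and the inversion of Bernstein's tail bound that produces the sample-complexity condition on $N$ — all of this is carried out inside Lemma~\ref{lem:prob_bnd_sphere_smoothed_bern}. Consequently the theorem is essentially a one-line assembly and presents no real obstacle; the only point requiring care is matching the notation of the lemma's $N$-bound to that of the theorem statement before combining the two error bounds.
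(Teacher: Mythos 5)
Your proposal is correct and matches the paper's own proof exactly: the paper likewise obtains \eqref{eq:BSG_thm} and \eqref{eq:cBSG_thm} by combining the bias bounds \eqref{eq:BSG_bound1} and \eqref{eq:cBSG_bound1} with the respective parts of Lemma~\ref{lem:prob_bnd_sphere_smoothed_bern} through the decomposition \eqref{eq:2terms_sphere}. Nothing further is needed.
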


\begin{proof} The proof for the first part \eqref{eq:BSG_thm} is a straightforward combination of the bound in \eqref{eq:BSG_bound1} and the result of the first part of Lemma \ref{lem:prob_bnd_sphere_smoothed_bern}. The proof for the second part \eqref{eq:cBSG_thm} is a straightforward combination of the bound in \eqref{eq:cBSG_bound1} and the result of the second part of Lemma \ref{lem:prob_bnd_sphere_smoothed_bern}.
\end{proof}

In Theorem \ref{thm:prob_bnd_sphere_smoothed_bern} one should notice the improved dependence of the size of the sample set $N$ on the probability $\delta$ as compared to Theorem \ref{thm:prob_bnd_smoothed}. While Bernstein's inequality 
does not apply in the case of the Gaussian smoothed gradient, there may be other ways to establish a better dependence on $\delta$. However, the dependence on $n$ in all cases is linear, which as we have shown for the GSG case is a necessary dependence.  A similar lower bound result for BSG can be derived analogously. 

Using the results of Theorem \ref{thm:prob_bnd_sphere_smoothed_bern}, as before, we derive bounds on $\sigma$ and $N$ that ensure that \eqref{eq:theta_cond} holds with  probability $1-\delta$. To ensure \eqref{eq:theta_cond}, with probability $1-\delta$, using  Theorem \ref{thm:prob_bnd_sphere_smoothed_bern} we want the following to hold
\begin{align}
	  L \sigma +  \frac{n \epsilon_f}{\sigma} &\leq \lambda \theta \|\nabla \phi({x})\| \label{eq:ball1},\\
	 r &\leq  (1-\lambda) \theta \|\nabla \phi({x})\|, \label{eq:ball2}
\end{align}
for some $\lambda\in (0,1)$. 

Let us first consider $g(x)$ calculated via \eqref{eq:BSG}. As before, to ensure that \eqref{eq:ball1} holds, we impose the following conditions: 
 \begin{align*}
	\sigma = \sqrt{ \frac{n \epsilon_f}{L}} \quad \text{and} \quad \|\nabla \phi(x)\|\geq \frac{2\sqrt{nL\epsilon_f}}{\lambda\theta}.
 \end{align*}
Now using these bounds and substituting $r = (1-\lambda) \theta \|\nabla \phi({x})\|$ into the first bound on $N$ in Theorem \ref{thm:prob_bnd_sphere_smoothed_bern} we have
\begin{align*} 
	&\left[ \frac{6n^2}{r^2} \left( \frac{\|\nabla \phi(x)\|^2}{n} + \frac{L^2\sigma^2}{4} + \frac{4\epsilon_f^2}{\sigma^2} \right)  + \frac{2n}{3r} \left( 2 \|\nabla \phi(x)\| + L \sigma + \frac{4\epsilon_f}{\sigma} \right) \right] \log \frac{n+1}{\delta}\\
	& \qquad \leq \left[ \frac{6n}{\theta^2}\frac{1}{(1-\lambda)^2} + \left( \frac{3n^2}{8} + 6\right)  \frac{\lambda^2}{(1-\lambda)^2} + \frac{4n}{3 \theta}\frac{1}{1-\lambda} + \left(\frac{n}{3} + \frac{4}{3}\right)\frac{\lambda}{ 1-\lambda}\right] \log \frac{n+1}{\delta} \nonumber.
\end{align*}

As before, we are interested in making the lower bound on $N$ to scale at most linearly with $n$.  Thus, to achieve this and to simplify the expression we choose $\lambda=\frac{1}{2\sqrt{n}}$ so that $\frac{ \lambda^2}{ (1-\lambda)^2}\leq \frac{1}{n}$, for all $n$. Then, using that  $\frac{1}{(1-\lambda)^2}\leq \frac{n}{(\sqrt{n}-1)^2}$ the above expression is bounded by
\begin{align}\label{eq:BSG_nbound_simple}
 \left[\frac{6n }{\theta^2 }\frac{n}{(\sqrt{n}-1)^2} + \frac{3n}{8}  + \frac{6}{n} + \frac{4n}{3 \theta}\frac{\sqrt{n}}{\sqrt{n}-1} + \frac{\sqrt{n}}{3} + \frac{4}{3\sqrt{n}} \right] \log \frac{n+1}{\delta}. 
\end{align}

This implies that by choosing $N$ at least as large as the value of \eqref{eq:BSG_nbound_simple}
we ensure that \eqref{eq:ball2} holds.

We now summarize  the result  for the gradient approximation computed via \eqref{eq:BSG}, for $\lambda =  \frac{1}{2\sqrt{n}}$.
\begin{corollary} \label{corr:BSG} 
Suppose that Assumption \ref{assum:lip_cont}  holds, $n>1$
and $g(x)$ is computed via  \eqref{eq:BSG}  with $N$ and $\sigma$ satisfying,
\begin{gather*}
N \geq \left[\frac{6n }{\theta^2 }\frac{n}{(\sqrt{n}-1)^2} + \frac{3n}{8}  + \frac{6}{n} + \frac{4n}{3 \theta}\frac{\sqrt{n}}{(\sqrt{n}-1)} + \frac{\sqrt{n}}{3} + \frac{4}{3\sqrt{n}} \right] \log \frac{n+1}{\delta} \quad \text{and} \quad
	\sigma = \sqrt{ \frac{n \epsilon_f}{L}}.
	\end{gather*}
If $\|\nabla \phi(x)\|\geq \frac{4n\sqrt{L\epsilon_f}}{\theta}$, then 
 \eqref{eq:theta_cond} holds with  probability $1-\delta$. 
\end{corollary}


We now derive the analogous bounds on $N$ and $\sigma$ for the case when $g(x)$ is calculated via \eqref{eq:cBSG}. To ensure \eqref{eq:theta_cond}, with probability $1-\delta$, using  Theorem \ref{thm:prob_bnd_sphere_smoothed_bern} we want the following to hold
\begin{align}
	 M \sigma^2 +  \frac{n \epsilon_f}{\sigma} &\leq \lambda \theta \|\nabla \phi({x})\| \label{eq:sphere3},\\
	 r &\leq  (1-\lambda) \theta \|\nabla \phi({x})\|, \label{eq:sphere4}
\end{align}
for some $\lambda\in (0,1)$. In order to ensure that \eqref{eq:sphere3} holds, we use the same logic as was done for Central Finite Differences in Section \ref{sec:fd}. Namely, we require the following:
 \begin{align*}
	\sigma =\sqrt[3]{ \frac{n \epsilon_f}{2M}} \quad \text{and} \quad  \|\nabla \phi(x)\|\geq \frac{3}{\lambda \theta} \sqrt[3]{\frac{n^2 M \epsilon_f^2}{4}}.
 \end{align*}
Now using these bounds and setting $r = (1-\lambda) \theta \|\nabla \phi({x})\|$ into the second bound on $N$ in Theorem \ref{thm:prob_bnd_sphere_smoothed_bern} we have
\begin{align*}
	 & \left[ \frac{6n^2}{r^2} \left( \frac{\|\nabla \phi(x)\|^2}{n} + \frac{M^2\sigma^4}{36} + \frac{\epsilon_f^2}{\sigma^2}\right) + \frac{2n}{3r} \left( 2 \|\nabla \phi(x)\| + \frac{M \sigma^2}{3} + \frac{2 \epsilon_f}{\sigma} \right) \right] \log \frac{n+1}{\delta}\\
& \qquad \leq \left[ \frac{6n}{\theta^2}\frac{1}{(1-\lambda)^2} + \left( \frac{n^2}{24} + \frac{3}{2}\right)  \frac{\lambda^2}{(1-\lambda)^2} + \frac{4n}{3 \theta}\frac{1}{1-\lambda} + \left(\frac{n}{9} + \frac{2}{3} \right)\frac{\lambda}{ 1-\lambda}\right] \log \frac{n+1}{\delta}.
\end{align*}
As before, we are interested in making the lower bound on $N$ to scale at most linearly with $n$.  Thus, to achieve this and to simplify the expression we choose $\lambda$ such that $\frac{ \lambda^2}{ (1-\lambda)^2}\leq \frac{1}{n}$, which,  implies that $\lambda\leq\frac{1}{\sqrt{n}} $. Then, using again the fact that  $\frac{1}{(1-\lambda)^2}\leq \frac{n}{(\sqrt{n}-1)^2}$ and $\frac{1}{n}\leq 1$  the above expression is bounded by
\begin{align*}
 \left[\frac{6n }{\theta^2 }\frac{n}{(\sqrt{n}-1)^2} + \frac{n}{24}  + \frac{3}{2n} + \frac{4n}{3 \theta}\frac{\sqrt{n}}{\sqrt{n}-1} + \frac{\sqrt{n}}{9} + \frac{2}{3\sqrt{n}} \right] \log \frac{n+1}{\delta}. 
	 \end{align*}

We now summarize the result  for the gradient approximation computed via \eqref{eq:cBSG}, using the fact that $\lambda =  \frac{1}{2\sqrt{n}}$.
\begin{corollary} \label{corr:cBSG} 
Suppose that Assumption \ref{assum:lip_cont_hess} holds, $n>1$ and $g(x)$ is computed via  \eqref{eq:cBSG}  with $N$ and $\sigma$ satisfying,
\begin{gather*}
N \geq  \left[\frac{6n }{\theta^2 }\frac{n}{(\sqrt{n}-1)^2} + \frac{n}{24}  + \frac{3}{2n} + \frac{4n}{3 \theta}\frac{\sqrt{n}}{\sqrt{n}-1} + \frac{\sqrt{n}}{9} + \frac{2}{3\sqrt{n}} \right] \log \frac{n+1}{\delta} \quad \text{and} \quad
\sigma=	\sqrt[3]{ \frac{n \epsilon_f}{2M}}.
\end{gather*}
If  $\|\nabla \phi(x)\|\geq \frac{6}{\theta} \sqrt[3]{\frac{n^{7/2} M \epsilon_f^2}{4}}$, then \eqref{eq:theta_cond} holds with  probability $1-\delta$. 
\end{corollary}

\subsection{Smoothing vs. Interpolation gradients}
We now want to give some quick intuition explaining why GSG and BSG method do not provide as high accuracy as linear interpolation. Let us consider the two method of estimating gradients based on the same sample set. In particular, to compare GSG with linear interpolation, we choose the sample set  $\mathcal{X}=\{x+\sigma u_1, x+\sigma u_2, \ldots, x+\sigma u_n\}$ for some $\sigma >0$ with $u$ obeying the standard Gaussian distribution. 
 Recall the definition of the matrix $Q_\mathcal{X}$ and the vector $F_\mathcal{X}$ (see Section \ref{sec:lin_mod}), and the fact that
 the gradient estimate computed by linear interpolation satisfies 
  \begin{align*} 
Q_\mathcal{X}	g_{LI}=F_\mathcal{X}/\sigma. 
\end{align*}
The GSG estimate, on the other hand is written as,
  \begin{align*} 
	g_{GSG}= \frac{1}{n}Q_\mathcal{X}^TF_\mathcal{X}/\sigma=\frac{1}{n}Q_\mathcal{X}^T Q_\mathcal{X}g_{LI}.
\end{align*} 
Hence, we obtain 
  \begin{align*} 
\|g_{LI}-g_{GSG}\|= \left\|\left(I-\frac{1}{n}Q_\mathcal{X}^TQ_\mathcal{X}\right)g_{LI}\right\|. 
\end{align*} 
We know that, when $\epsilon(x) = 0$ for all $x \in \mathbb{R}^n$, the difference $\|g_{LI}-\nabla \phi(x)\|$ goes to zero as $\sigma\to 0$. However, $\|(I-\frac{1}{n}Q_\mathcal{X}^TQ_\mathcal{X})g_{LI}\|$ does not, as it does not depend on $\sigma$. While we have $\mathbb{E}[\frac{1}{n}Q_\mathcal{X}^TQ_\mathcal{X}]=I$, nevertheless, with non-negligible probability, the matrix 
$\|(I-\frac{1}{n}Q_\mathcal{X}^TQ_\mathcal{X})g_{LI}\|\geq \nu \|g_{LI}\|$ for some fixed non-negligible value of $\lambda$, for example, $\nu>1/2$. 

The intuition for the BSG can be derived in the same manner.

\subsection{Summary of Results}\label{sec:summary}
In this section, we summarize the results for all methods. Specifically, 
Table \ref{tbl:bounds_full2} summarizes the conditions on $N$, $\sigma$ and $\nabla \phi(x)$ for each method that we consider in this paper to guarantee condition \eqref{eq:theta_cond}. Note that for the smoothing methods the bounds hold with probability $1-\delta$ and the number of samples depends on $\delta$. From the table, it is clear that for large $n$ ($\frac{n}{(\sqrt{n}-1)^2}$ goes to $1$ as $n \rightarrow \infty$), all methods have the same  dependence (order of magnitude) on the dimension $n$; however, for the smoothing methods the constants in the bound can be significantly larger than those for deterministic methods, such as finite differences. This suggests that deterministic methods may be more efficient, at least in the setting considered in this paper, when accurate gradient estimates are  desired. The bounds on the sampling radius are comparable for the smoothing and deterministic methods

\setlength{\tabcolsep}{4pt}

\begin{table}[h!]
\caption{ Bounds on $N$, $\sigma$ and $\|\nabla \phi(x)\|$ that ensure $\| g(x) - \nabla \phi(x) \| \leq \theta\|\nabla \phi(x)\|$ ($\pmb{^*}$ denotes result is with probability $1-\delta$). }
\small
\label{tbl:bounds_full2}
\centering
\begin{tabular}{lccc}
\toprule
\begin{tabular}[l]{@{}l@{}}\textbf{Gradient} \\  \textbf{Approximation}\end{tabular} &
 \textbf{$\pmb{N}$} &
 \textbf{$\pmb{\sigma}$} &  \textbf{$\pmb{\| \nabla \phi(x) \|}$} \\  \midrule

\begin{tabular}[l]{@{}l@{}}\textbf{Forward Finite} \\  \textbf{Differences}\end{tabular} & {\scriptsize$n$} & {\scriptsize$2 \sqrt{\frac{\epsilon_f}{L}}$} & {\scriptsize$\frac{2\sqrt{nL\epsilon_f}}{\theta}$} \\ \hdashline
 \begin{tabular}[l]{@{}l@{}}\textbf{Central Finite} \\  \textbf{Differences}\end{tabular} & {\scriptsize$n$} & {\scriptsize$ \sqrt[3]{\frac{6\epsilon_f}{M}}$} & {\scriptsize$\frac{ \sqrt[3]{9}\sqrt[3]{ n^{3/2}M \epsilon_f^2}}{2 \theta}$}  \\ \hdashline
 \begin{tabular}[l]{@{}l@{}}\textbf{Linear} \\  \textbf{Interpolation}\end{tabular} & {\scriptsize$n$} & {\scriptsize$2 \sqrt{\frac{\epsilon_f}{L}}$} & {\scriptsize$\frac{2\| Q_\mathcal{X}^{-1}\|\sqrt{nL\epsilon_f}}{\theta}$} \\ \hdashline
 \begin{tabular}[l]{@{}l@{}}\textbf{Gaussian Smoothed} \\  \textbf{Gradients}$\pmb{^*}$\end{tabular} & {\scriptsize$\frac{9n}{\delta  \theta^2} \frac{n}{(\sqrt{n}-1)^2} + \frac{3(n+4)}{16\delta}  + \frac{3}{n\delta}$} & {\scriptsize$ \sqrt{\frac{\epsilon_f}{L}}$} & {\scriptsize$\frac{6n\sqrt{L\epsilon_f}}{\theta}$} \\ \hdashline
 \begin{tabular}[l]{@{}l@{}}\textbf{Centered Gaussian} \\  \textbf{Smoothed Gradients}$\pmb{^*}$\end{tabular}   & {\scriptsize$\frac{9n}{\delta  \theta^2} \frac{n}{(\sqrt{n}-1)^2} + \frac{n+6}{48\delta } + \frac{3}{4n\delta}$} & {\scriptsize$ \sqrt[3]{\frac{\epsilon_f}{\sqrt{n}M}}$} & {\scriptsize$\frac{18\sqrt[3]{n^{7/2}M\epsilon_f^2}}{\sqrt[3]{4}\theta}$} \\ \hdashline
  \begin{tabular}[l]{@{}l@{}}\textbf{Sphere Smoothed} \\  \textbf{Gradients}$\pmb{^*}$\end{tabular} & {\scriptsize$ \left[ \left(\frac{6n }{\theta^2 }\frac{\sqrt{n}}{(\sqrt{n}-1)} + \frac{4n}{3 \theta} \right)\frac{\sqrt{n}}{(\sqrt{n}-1)} +  \frac{3n}{8} + \frac{6}{n} + \frac{\sqrt{n}}{3} + \frac{4}{3\sqrt{n}} \right] \log \frac{n+1}{\delta}$} & {\scriptsize$ \sqrt{\frac{n\epsilon_f}{L}}$} & {\scriptsize$\frac{4n\sqrt{L\epsilon_f}}{\theta}$} \\ \hdashline 
 \begin{tabular}[l]{@{}l@{}}\textbf{Centered Sphere} \\  \textbf{Smoothed Gradients}$\pmb{^*}$\end{tabular} & {\scriptsize$ \left[ \left(\frac{6n }{\theta^2 }\frac{\sqrt{n}}{(\sqrt{n}-1)} + \frac{4n}{3 \theta} \right)\frac{\sqrt{n}}{(\sqrt{n}-1)} +  \frac{n}{24} + \frac{3}{2n} + \frac{\sqrt{n}}{9} + \frac{2}{3\sqrt{n}} \right] \log \frac{n+1}{\delta}$} & {\scriptsize$ \sqrt[3]{\frac{n\epsilon_f}{M}}$} & {\scriptsize$\frac{6\sqrt[3]{n^{7/2}M\epsilon_f^2}}{\sqrt[3]{4}\theta}$} \\

 \bottomrule
\end{tabular}
 \end{table}

\section{Numerical Results}\label{sec:num}

In this section, we test our theoretical conclusions via numerical experiments. First, we present numerical results evaluating the quality of gradient approximations constructed via finite differences, linear interpolation, Gaussian smoothing and smoothing on a unit sphere (Section \ref{sec:num_grad_approx}). We then illustrate the performance of a line search derivative-free optimization algorithm that employs the aforementioned gradient approximations  on standard DFO benchmarking problems as well as on Reinforcement Learning tasks (Section \ref{sec:alg_perf}).


\subsection{Gradient Approximation Accuracy} \label{sec:num_grad_approx}
We compare  the numerical accuracy of the gradient approximations obtained by the methods discussed in Section~\ref{sec:grad_approx}. 
We compare the resulting $\theta$, which   is the relative error, 
\begin{align} \label{eq:relative_error}
	\frac{\|g(x) - \nabla \phi(x) \|}{\|\nabla \phi(x)\|},
\end{align}
and report the average log of the relative error, i.e., $\log_{10}{\theta}$. Theory dictates that an optimization algorithm will converge if $\log_{10}{\theta} < \log_{10}{1/2} \approx -0.301$, namely $\theta < 1/2$, with sufficiently high probability; see \cite{berahas2019global}. 

\paragraph{Gradient estimation on a synthetic function}
We first conduct tests on a synthetic function, 
\begin{align}		\label{eq:sincos}
	\phi(x) = \left( \sum_{i=1}^{n/2} M \sin (x_{2i-1}) + \cos (x_{2i}) \right) + \frac{L-M}{2n} x^\intercal 1_{n\times n} x, 
\end{align}
where  $n$ is an even number denoting the input dimension, $1_{n\times n}$ denotes an $n$ by $n$ matrix of all ones, and $L>M>0$. We approximate the gradient of $\phi$ at the origin, for which $\|\nabla \phi(0)\| = \sqrt{\frac{n}{2}} M$. The Lipschitz constants for the first and second derivatives are  $L$ and $\max\{M,1\}$, respectively. The function given in \eqref{eq:sincos} allows us to vary all the moving components in the gradient approximations, namely, the dimension $n$, the Lipschitz constants $L$ and $M$ of the gradients and Hessians, respectively, the sampling radius $\sigma$ and the size of the sample set $N$, in order to evaluate the different gradient approximation methods. We show results for two regimes: $(1)$ the noise-free regime where $f(x) = \phi(x)$ (Figure \ref{fig:fig_grad_approx_sin}, left column); and, $(2)$ the noisy regime where $f(x) = \phi(x) + \epsilon(x)$ and $\epsilon(x) \sim U([-\epsilon_f, \epsilon_f])$ with $\epsilon_f = 0.0001$ (Figure \ref{fig:fig_grad_approx_sin}, right column).   

We illustrate the relative approximation errors of the different methods 
using two sets (noise-free and noisy) of $5$ box plots (Figure \ref{fig:fig_grad_approx_sin}). The default values of the parameters are: $n=20$, $M=1$, $L=2$, $\sigma = 0.01$, and $N= 4n$ (for the smoothing methods). For each box plot, we vary one of the parameters.  Since the acutal sampling radius for Gaussian smoothing methods is not $\sigma$ but $\sigma \mathbb{E}_{u \sim \mathcal{N}(0,I)}$, the $\sigma$ used for these methods was $\sigma$ divided by $\mathbb{E}_{u \sim \mathcal{N}(0,I)}$. Note, when comparing the relative errors for different values of $M$, the constant $L$ is was set to $M+1$. For all randomized methods, including linear interpolation, $\nabla \phi(0)$ is estimated 100 times, i.e., we compute 100 realizations of $g(0)$. For linear interpolation, the directions $\{u_i\}_{i=1}^n$ are chosen as $u_i \sim \mathcal{N}(0,I)$ for all $i = 1,2,\dots, n$, and then normalized so that they lie in a unit ball $u_i \gets u_i / \max_{j\in\{1,\dots,n\}} \|u_j\|$. Moreover, all experiments in the noisy regime were conducted 100 times. Finally, in each of the plots in Figure \ref{fig:fig_grad_approx_sin} one parameter was varied and all the rest were set to their default values.

\begin{figure}[h!]
\begin{subfigure}{.5\textwidth}
  \centering
  \includegraphics[trim=51 0 60 10,clip,width=0.95\linewidth]{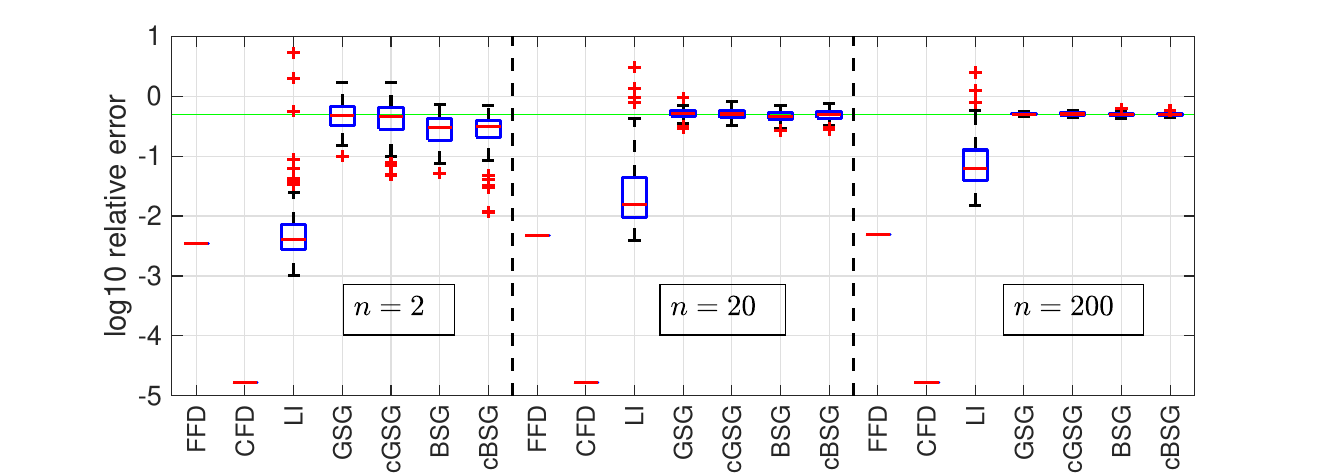}
  \caption{Different $n$ ($n \in \{2,20,200\}$).}
  \label{fig:sfig1}
\end{subfigure}%
\begin{subfigure}{.5\textwidth}
  \centering
  \includegraphics[trim=51 0 60 10,clip,width=0.95\linewidth]{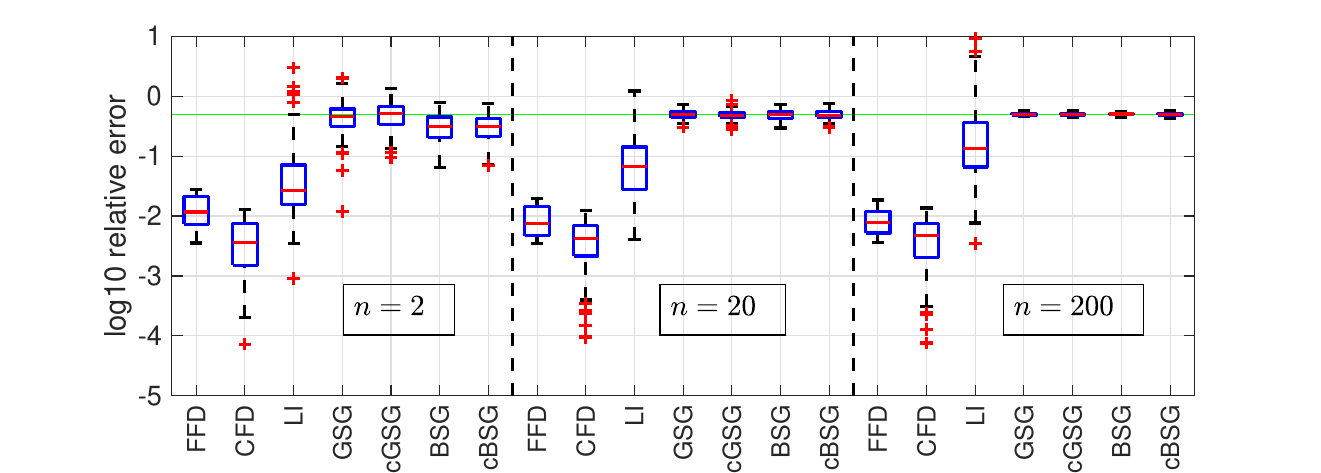}
  \caption{Different $n$ ($n \in \{2,20,200\}$).}
  \label{fig:sfig1e}
\end{subfigure}

\begin{subfigure}{.5\textwidth}
  \centering
  \includegraphics[trim=51 0 60 10,clip,width=0.95\linewidth]{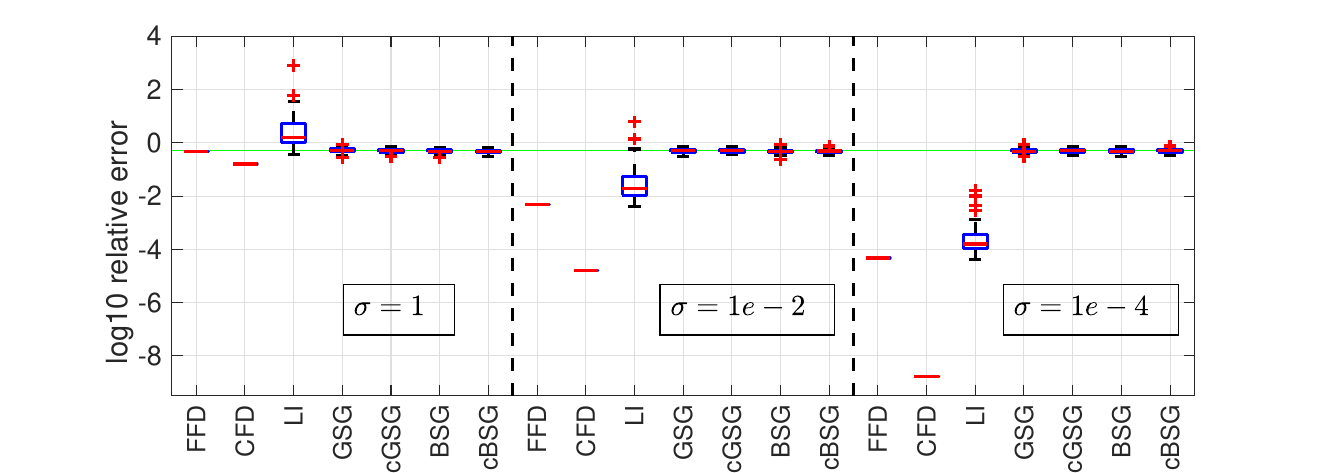}
  \caption{Different $\sigma$ ($\sigma \in \{10^0,10^{-3},10^{-6}\}$).}
  \label{fig:sfig2}
\end{subfigure} %
\begin{subfigure}{.5\textwidth}
  \centering
  \includegraphics[trim=51 0 60 10,clip,width=0.95\linewidth]{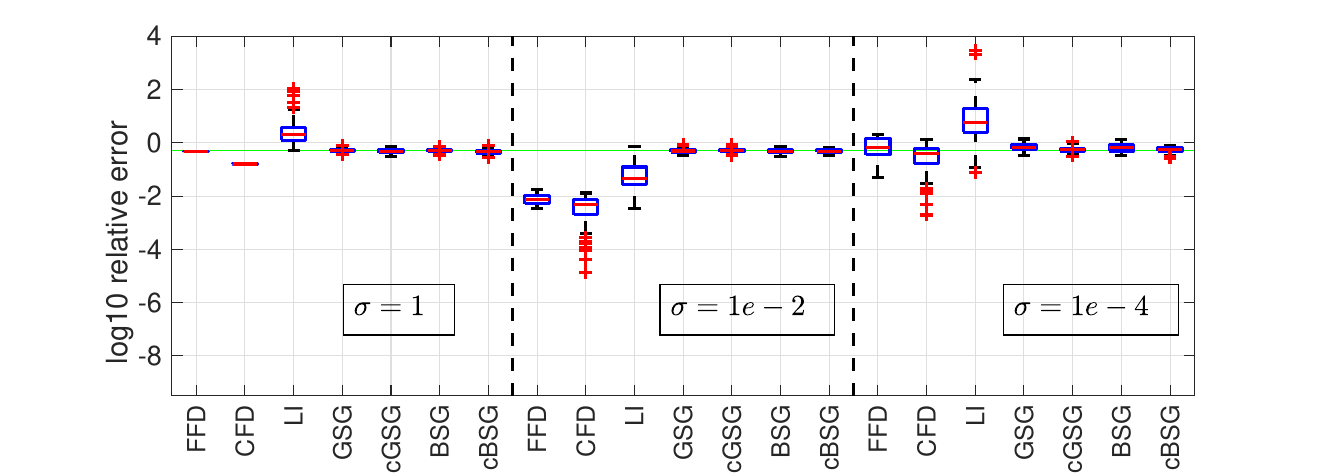}
  \caption{Different $\sigma$ ($\sigma \in \{10^0,10^{-2},10^{-4}\}$).}
  \label{fig:sfig2e}
\end{subfigure}

\begin{subfigure}{.5\textwidth}
  \centering
  \includegraphics[trim=51 0 60 10,clip,width=0.95\linewidth]{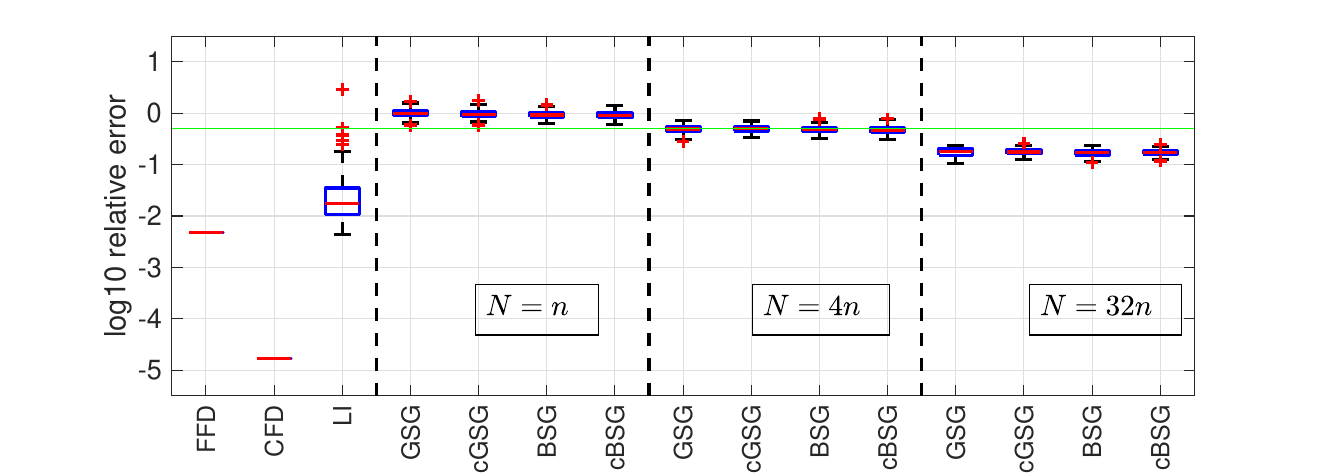}
  \caption{Different $N$ ($N \in \{n,4n,32n\}$, smoothing methods).}
  \label{fig:sfig3}
\end{subfigure}%
\begin{subfigure}{.5\textwidth}
  \centering
  \includegraphics[trim=51 0 60 10,clip,width=0.95\linewidth]{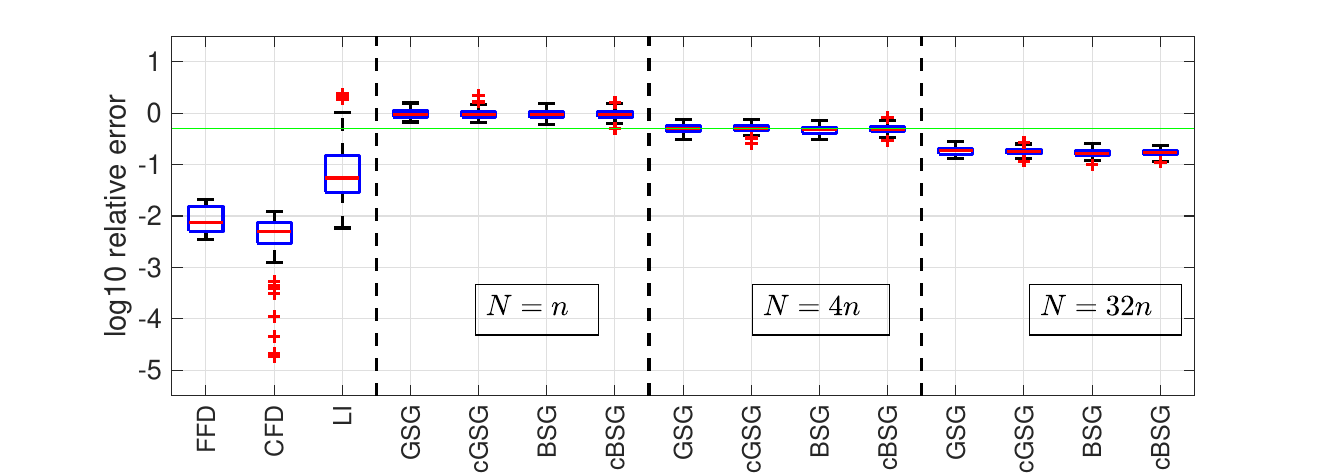}
  \caption{Different $N$ ($N \in \{n,4n,32n\}$, smoothing methods).}
  \label{fig:sfig3e}
\end{subfigure}

\begin{subfigure}{.5\textwidth}
  \centering
  \includegraphics[trim=51 0 60 10,clip,width=0.95\linewidth]{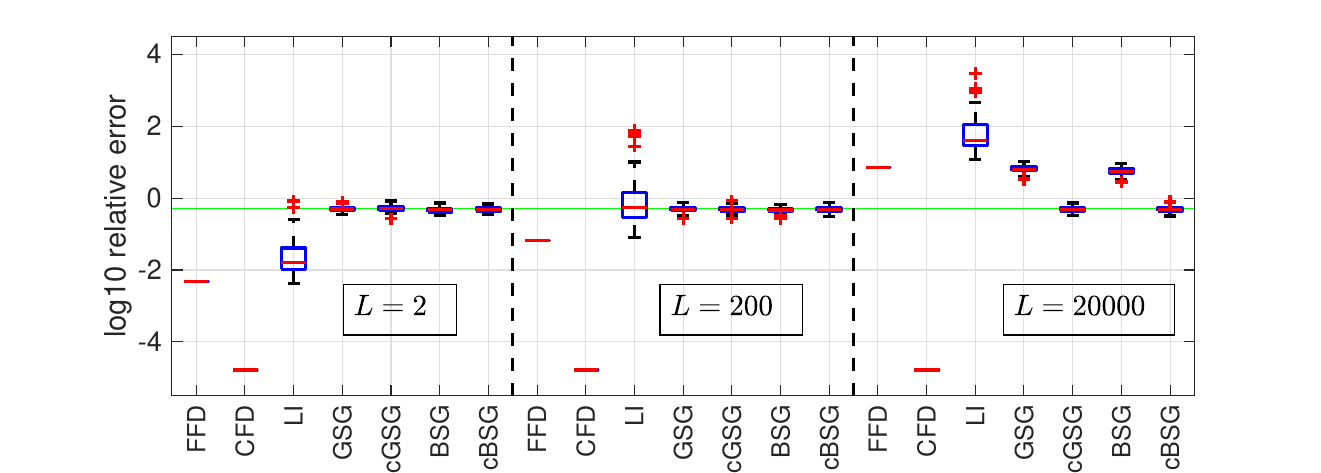}
  \caption{Different $L$ ($L \in \{2,200,20000\}$).}
  \label{fig:sfig4}
\end{subfigure}%
\begin{subfigure}{.5\textwidth}
  \centering
  \includegraphics[trim=51 0 60 10,clip,width=0.95\linewidth]{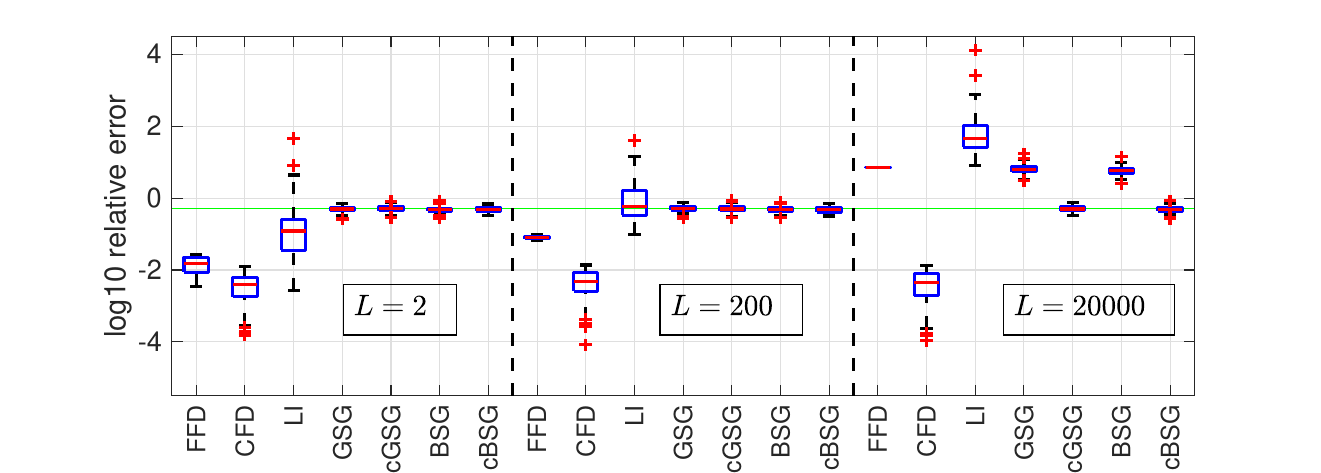}
  \caption{Different $L$ ($L \in \{2,200,20000\}$).}
  \label{fig:sfig4e}
\end{subfigure}

\begin{subfigure}{.5\textwidth}
  \centering
  \includegraphics[trim=51 0 60 10,clip,width=0.95\linewidth]{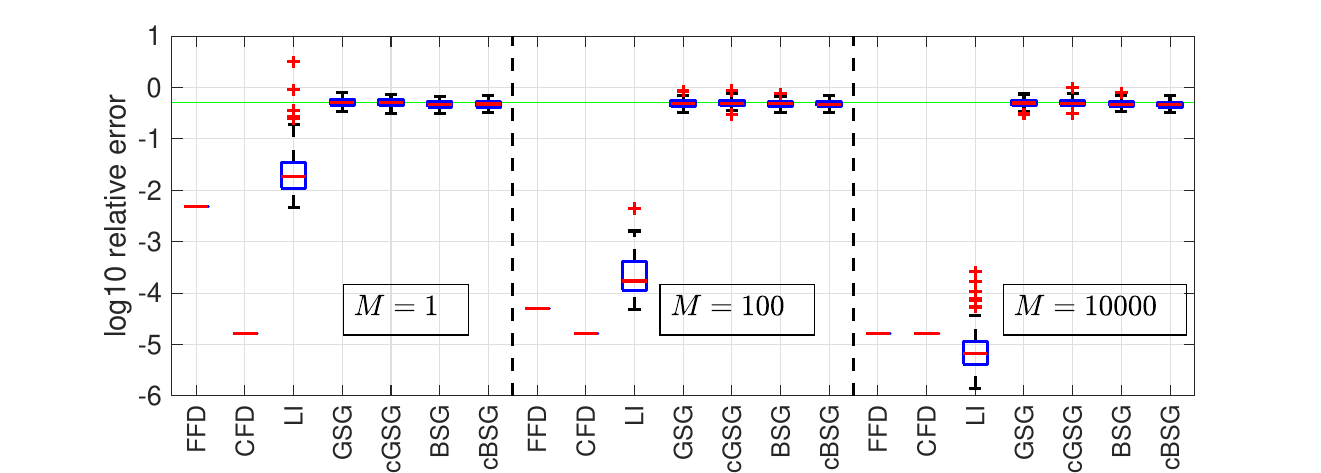}
  \caption{Different $M$ ($M \in \{1,100,10000\}$). Note, $L=M+1$.}
  \label{fig:sfig5}
\end{subfigure}%
\begin{subfigure}{.5\textwidth}
  \centering
  \includegraphics[trim=51 0 60 10,clip,width=0.95\linewidth]{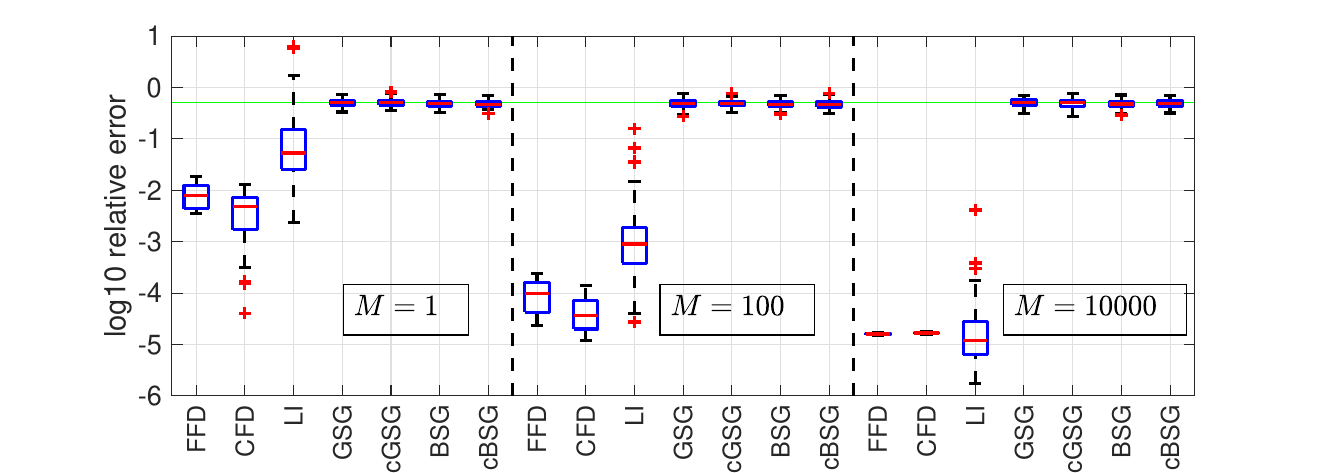}
  \caption{different $M$ ($M \in \{1,100,10000\}$). Note, $L=M+1$.}
  \label{fig:sfig5e}
\end{subfigure}
\caption{Log of relative error \eqref{eq:relative_error} of gradient approximations (FFD, CFD, LI, GSG, cGSG, BSG, cBSG) with different $n$, $\sigma$, $N$, $L$ and $M$. Left column: noise-free ($\epsilon_f=0$); Right column: noisy (iid noise $U(-\epsilon_f, \epsilon_f)$ for each point and $\epsilon_f=0.0001$). \label{fig:fig_grad_approx_sin}}
\end{figure}

In accordance with our theory, we see in Figure \ref{fig:sfig1} that the relative approximation errors of most methods are not affected by the dimension $n$ as long as the sampling radius and the number of sample points is chosen appropriately.
The only method that is affected is interpolation; this is because as the dimension increases the matrix $Q_{\cal{X}}$ formed by the sampling directions (chosen randomly) may become more ill-conditioned. The effect of the dimension $n$ becomes more apparent in the noisy regime; see Figure \ref{fig:sfig1e}.  In Figure \ref{fig:sfig2}, we observe that the size of $\sigma$, the sampling radius, has a significant effect on the deterministic methods (FFD and CFD) and LI. As predicted by the theory, in the noise-free setting, the gradient approximations improve as the sampling radius is reduced. For the randomized methods, GSG, cGSG, BSG and cBSG, in the noise-free setting, it appears that the sampling radius has no effect on the approximation quality. This is not surprising as our theory indicates that one of the terms in the error bound does not diminish with $\sigma$; see \ref{fig:sfig2}. We should note that the randomized approximations are significantly worse than the approximations constructed by the deterministic methods in the noise-free regime. In the noisy regime, diminishing the sampling radius does not necessarily improve the approximations; see Figure \ref{fig:sfig2e}. This is predicted by the theory, as the error bounds have two terms, one that is diminishing with $\sigma$ and one that is increasing with $\sigma$. In Figures \ref{fig:sfig3} and \ref{fig:sfig3e}, we see that having more samples improves the accuracy achieved by GSG, cGSG, BSG and cBSG, in both the noise-free and noisy regimes. Finally, in Figures \ref{fig:sfig4}, \ref{fig:sfig4e}, \ref{fig:sfig5} and \ref{fig:sfig5e}, we see how the approximations are affected by changes in the Lipschitz constants. For example, the FFD, GSG and BSG approximations are affected by changes in $L$, whereas, the CFD cGSG and cBSG approximations are immune to these changes, but are affected by changes in $M$. All these effects are predicted by the theory. Note, in our experiments the FFD, GSG and BSG approximations are sensitive to changes in $M$, this is due to the fact that the constant $L$ is linked to $M$ ($L = M+1$).

\begin{figure}[h!]
\centering
\begin{subfigure}{\textwidth}
    \centering
    \includegraphics[trim=51 0 60 10,clip,width=\linewidth]{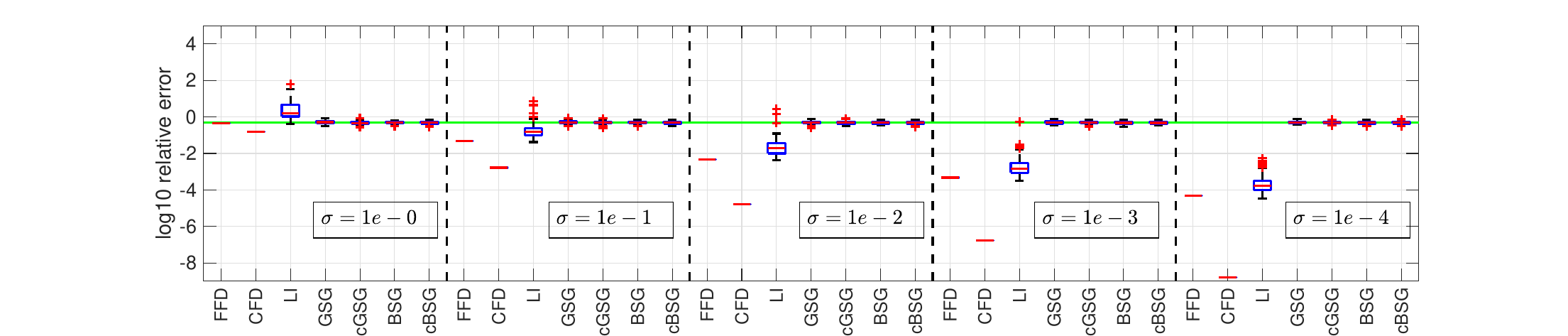}
    \caption{$\epsilon_f = 0$ with $\sigma \in \{10^0,10^{-1},10^{-2},10^{-3},10^{-4}\}$ \label{fig:box1}}
\end{subfigure}

\begin{subfigure}{\textwidth}
    \centering
    \includegraphics[trim=51 0 60 10,clip,width=\linewidth]{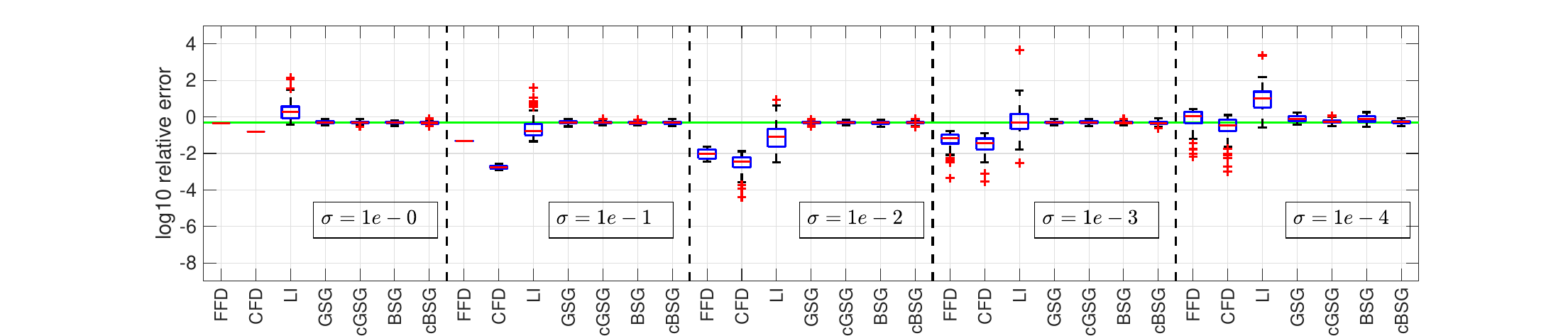}
    \caption{$\epsilon_f = 10^{-4}$ with $\sigma \in \{10^0,10^{-1},10^{-2},10^{-3},10^{-4}\}$ \label{fig:box2}}
\end{subfigure}

\begin{subfigure}{\textwidth}
    \centering
    \includegraphics[trim=51 0 60 10,clip,width=\linewidth]{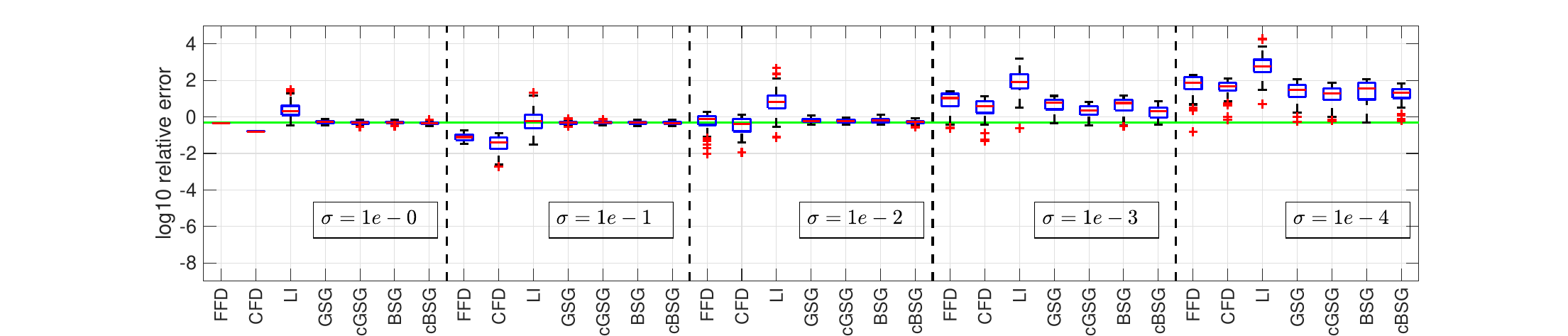}
    \caption{$\epsilon_f = 10^{-2}$ with $\sigma \in \{10^0,10^{-1},10^{-2},10^{-3},10^{-4}\}$ \label{fig:box3}}
\end{subfigure}
\caption{Log of relative error \eqref{eq:relative_error} of gradient approximations (FFD, CFD, LI, GSG, cGSG, BSG, cBSG) with different $\sigma$. Top row: $\epsilon_f=0$; Middle row: $\epsilon_f=10^{-4}$; Bottom row: $\epsilon_f=10^{-2}$. \label{fig:fig_grad_approx_sin2}}
\end{figure}

In order to further illustrate the effects of noise $\epsilon_f$ and sampling radius $\sigma$, we ran experiments on the function given in \eqref{eq:sincos} and varied these two parameters; see Figure \ref{fig:fig_grad_approx_sin2}. Each row illustrates results for a different noise level $\epsilon_f \in \{ 0, 10^{-4}, 10^{-2}\}$ for different sampling radii $\sigma \in \{ 10^0, 10^{-1},10^{-2},10^{-3},10^{-4}\}$. In the absence of noise (Figure \ref{fig:box1}), as the sampling radius is reduced the approximations get better. As predicted by the theory, this is not the case in the presence of noise (Figures \ref{fig:box2} and \ref{fig:box3}).

\paragraph{Gradient estimation on Schittkowski functions \cite{schittkowski2012more}}
Next, we test the different gradient approximations on the $69$ functions from the Schittkowski test set \cite{schittkowski2012more}. The methods we compare are the same as in the case of the synthetic function.  
We computed the gradient approximations for a variety of points with diverse values for $\nabla \phi(x_k)$ and local Lipschitz constants $L$. For each problem we generated points by running gradient descent with a fixed step size for either $100$ iterations or until the norm of the true gradient reached a value of $10^{-2}$. Since for several problems the algorithm terminated in less than 100 iterations, the actual number of points we obtained was $5330$.

Tables~\ref{tab:num_error3} and \ref{tab:num_error4} summarize the results of these experiments for the noise-free and noisy ($\epsilon_f = 10^{-4}$) regimes, respectively. We show the average of the log of the relative error \eqref{eq:relative_error} for the $5330$ points and the percentage of gradient estimates achieving $\theta < 1/2$ for different choices of $\sigma$, and, where appropriate, different choices of $N$. The values in bold indicate cases where the average of $\log_{10} \theta < \log_{10} 1/2$ or the percentage of gradient estimates achieving $\theta < 1/2$ is greater than 50\%, repectively.

Table~\ref{tab:num_error3} illustrates the results in the noise-free regime. For these experiments, the sampling radius was chosen as $\sigma \in \{10^{-2}, 10^{-5}, 10^{-8}\}$. As predicted by the theory, in the noise-free case as the sampling radius decreases the quality of the approximations increases. This is true for all methods. We observe that for the smoothing methods more than $4n$ samples are needed to reliably obtain $\log_{10}{\theta}<\log_{10} 1/2 \approx-0.301$ (or $\theta < 1/2$). Moreover, this experiment indicates that the relative errors ${\theta}$ for FFD, CFD and LI methods are significantly smaller than those obtained by the smoothing methods.

\begin{table}[h!]

\caption{ Average (Log) Relative Error of Gradient Approximations for $5330$ Problems ($\epsilon_f=0$).}
\footnotesize
\label{tab:num_error3}
\centering
\begin{tabular}{ccccc}
\toprule
\textbf{Method} &
$\pmb{ N}$ &
 \textbf{$\pmb{ {\sigma}} = 10^{-2}$} &
 \textbf{$\pmb{ {\sigma}} = 10^{-5}$} &
 \textbf{$\pmb{ {\sigma}} = 10^{-8}$}  \\  \midrule
\textbf{FFD} & $ {n}$ &  {-0.1651/42.68\%} &\textbf{-3.0124} / \textbf{95.10\%} &\textbf{-5.7176} / \textbf{98.57\%} \\ \hline
\textbf{CFD} & $ {n}$ &\textbf{-4.0112} / \textbf{93.41\%} &\textbf{-8.4448} / \textbf{98.76\%} &\textbf{-7.3651} / \textbf{98.57\%}  \\ \hline
\textbf{LI} & $ {n}$ &  {0.3808 / 27.64\%} &\textbf{-2.4616} / \textbf{91.44\%} &\textbf{-5.0777} / \textbf{98.22\%}  \\ \hline
\textbf{GSG} & $ {n}$ &  {0.4067 / 4.05\%} &  {-0.0060 / 6.19\%} &-0.0425 / 7.11\% \\ \hdashline
		&$2 {n}$ &  {0.3108 / 8.01\%} &-0.1252 / 14.50\% &-0.1754 / 15.91\%  \\ \hdashline
	 	&$4 {n}$ &  {0.1790 / 24.39\%} &-0.2669 / 49.74\% &\textbf{-0.3188} / \textbf{51.73\%} \\ \hdashline
	 	&$8 {n}$ &  {0.0477 / 45.82\%} &\textbf{-0.4117} / \textbf{84.00\%} &\textbf{-0.4625} / \textbf{86.85\%}  \\ \hline
 \textbf{cGSG}  &$ {n}$ &  {0.0215 / 6.19\%} &-0.0435 / 6.90\% &-0.0430 / 6.42\% \\ \hdashline
	 & $2 {n}$ &  {-0.0983 / 14.80\%} &-0.1822 / 17.58\% &-0.1723 / 15.89\% \\ \hdashline
	 & $4 {n}$ &-0.2307 / 48.05\% &\textbf{-0.3195} / \textbf{52.12\%} &\textbf{-0.3163} / \textbf{51.16\%} \\ \hdashline
	 & $8 {n}$ &\textbf{-0.3568} / \textbf{81.84\%} &\textbf{-0.4665} / \textbf{87.28\%}  &\textbf{-0.4634} / \textbf{86.40\%} \\ \hline
 \textbf{BSG} & $ {n}$ &  {0.3478 / 6.21\%} &  {-0.0823 / 12.38\%} &-0.1192 / 12.23\% \\ \hdashline
	 & $2 {n}$ &  {0.2033 / 15.59\%} &-0.2202 / 28.29\%  &-0.2609 / 29.55\% \\ \hdashline
	 & $4 {n}$ &  {0.0544 / 38.46\%} &\textbf{-0.3649} / \textbf{67.37\%} &\textbf{-0.4097} / \textbf{70.58\%} \\ \hdashline
	 & $8 {n}$ &  -0.0956 / \textbf{60.11\%} &\textbf{-0.5163} / \textbf{93.62\%} &\textbf{-0.5593} / \textbf{96.81\%}  \\ \hline
  \textbf{cBSG} & ${n}$ &  {-0.0503 / 10.38\%} &-0.1242 / 11.95\% &-0.1258 / 12.36\% \\ \hdashline
	 & $2 {n}$ &-0.1861 / 26.70\% &-0.2677 / 30.19\% &-0.2639 / 29.64\% \\ \hdashline
	 & $4 {n}$ &\textbf{-0.3247} / \textbf{66.40\%} &\textbf{-0.4109} / \textbf{70.00\%} &\textbf{-0.4125} / \textbf{71.52\%} \\ \hdashline
	 & $8 {n}$ &\textbf{-0.4625} / \textbf{91.52\%} &\textbf{-0.5593} / \textbf{97.13\%} &\textbf{-0.5677} / \textbf{96.94\%}   \\ \bottomrule
\end{tabular}
\end{table}

Table \ref{tab:num_error4} illustrates the performance of the gradient approximation in the presence of noise ($\epsilon_f = 10^{-4}$). Here the sampling radius was chosen as $\sigma \in \{10^{-1}, 10^{-2}, 10^{-3}, 10^{-4}\}$. As in the noise-free regime, it appears that overall the gradient approximations computed via FFD, CFD and LI have smaller relative errors than those obtained by the smoothing methods. Moreover, as predicted by the theory in the noisy regime one needs to carefully select the sampling radius in order to achieve the smallest relative error.

\begin{table}[h!]
\caption{ Average (Log) Relative Error of Gradient Approximations for $5330$ Problems ($\epsilon_f = 10^{-4}$).}
\footnotesize
\label{tab:num_error4}
\centering
\begin{tabular}{cccccc}
\toprule
\textbf{Method} &
$\pmb{ {N}}$&
 \textbf{$\pmb{ {\sigma}} = 10^{-1}$} &
 \textbf{$\pmb{ {\sigma}} = 10^{-2}$} &
 \textbf{$\pmb{ {\sigma}} = 10^{-3}$} &
 \textbf{$\pmb{ {\sigma}} = 10^{-4}$}  \\  \midrule
\textbf{FFD} & $ {n}$ &0.8593 / 12.03\% &-0.0827 / 41.71\%  &\textbf{-0.5450} / \textbf{58.99\%}  &0.0724 / 31.26\% \\ \hline
\textbf{CFD} & $ {n}$ &\textbf{-0.7297} / \textbf{62.61\%} &\textbf{-1.7849} / \textbf{91.48\%} & \textbf{-1.2902} / \textbf{80.56\%} &\textbf{-0.3664} / 45.52\% \\ \hline
\textbf{LI} & $ {n}$ &1.4604 / 8.37\% &0.4718 / 24.86\% &0.0841 / 38.07\% &0.7335 / 21.33\% \\ \hline
\textbf{GSG} & $ {n}$ &1.1284 / 1.67\% &0.4105 / 4.73\% &0.2262 / 4.97\% &0.4954 / 3.08\% \\ \hdashline
		&$2 {n}$ &1.0574 / 2.57\% &0.3085 / 8.39\% &0.1052 / 11.09\% &0.3728 / 7.94\% \\ \hdashline
	 	&$4 {n}$ &0.9970 / 7.49\% &0.1888 / 22.70\% &-0.0344 / 32.68\% &0.2366 / 20.24\% \\ \hdashline
	 	&$8 {n}$ &0.8835 / 14.02\% &0.0503 / 45.55\% &-0.1686 / \textbf{62.55\%} &0.0960 / 36.79\%  \\ \hline
 \textbf{cGSG}  &$ {n}$ &0.3144 / 4.37\% &0.0178 / 6.62\% &0.0178 / 6.42\% &0.2783 / 4.26\% \\ \hdashline
	 & $2 {n}$ &0.2472 / 10.19\% &-0.0988 / 14.95\% &-0.1151 / 14.33\% &0.1446 / 11.07\% \\ \hdashline
	 & $4 {n}$ &0.2049 / 28.99\% &-0.2256 / 46.62\% &-0.2499 / 42.27\% &0.0054 / 26.21\% \\ \hdashline
	 & $8 {n}$ &0.1441 / \textbf{52.51\%} &\textbf{-0.3594} / \textbf{81.97\%} &\textbf{-0.3891} / \textbf{76.68\%}  &-0.1341 / 47.35\%  \\ \hline
 \textbf{BSG} & $ {n}$ &1.0705 / 1.93\% &0.3460 / 6.42\% &0.1848 / 7.90\% &0.4919 / 4.62\% \\ \hdashline
	 & $2 {n}$ &0.9383 / 5.07\% &0.2016 / 16.12\% &0.0381 / 20.36\% &0.3473 / 11.52\% \\ \hdashline
	 & $4 {n}$ &0.8119 / 12.53\% &0.0541 / 38.03\% &-0.1149 / 45.57\% &0.1957 / 26.68\% \\ \hdashline
	 & $8 {n}$ &0.6725 / 20.49\% &-0.0954 / \textbf{59.81\%} &-0.2603 /\textbf{71.31\%} &0.0492 / 42.23\%  \\ \hline
  \textbf{cBSG} & $ {n}$ &0.2210 / 7.95\% &-0.0510 / 10.94\% &-0.0422 / 9.91\% &0.2565 / 7.67\% \\ \hdashline
	 & $2 {n}$ &0.1311 / 16.85\% &-0.1775 / 25.91\%  &-0.1833 / 24.50\% &0.1093 / 17.02\% \\ \hdashline
	 & $4 {n}$ &0.0369 / 42.20\% &\textbf{-0.3149} / \textbf{64.20\%}  &\textbf{-0.3303} / \textbf{56.85\%} &-0.0418 / 37.41\% \\ \hdashline
	 & $8 {n}$ &-0.0582 / \textbf{63.60\%} &\textbf{-0.4636} / \textbf{90.71\%}  &\textbf{-0.4754} / \textbf{86.30\%} &-0.1877 / \textbf{54.18\%}  \\ \bottomrule
\end{tabular}
\end{table}

\subsection{Performance of Line Search DFO Algorithm with Different Gradient Approximations} \label{sec:alg_perf}
The ability to approximate the gradient sufficiently accurately is a crucial ingredient of model based, and in particular line search, DFO algorithms. The numerical results presented in Section \ref{sec:num_grad_approx} illustrated the merits and limitations of the different gradient approximations. In this section, we investigate how these methods perform in conjunction with a line search DFO algorithm \cite[Algorithm 1]{berahas2019global}.

\paragraph{Mor\'e \& Wild Problems \cite{more2009benchmarking}}

Several algorithms could be considered in this section. We focus on line search DFO algorithms that either compute steepest descent search directions ($d_k = -g(x_k)$) or L-BFGS \cite{NW} search directions ($d_k = -H_kg(x_k)$). Moreover, we considered both adaptive line search variants as well as variants that used a constant, tuned step size parameter. Overall, we investigated the performance of $17$ different algorithms
. We considered algorithms that approximate the gradient using FFD, CFD and the four smoothing methods with steepest descent or L-BFGS search directions and an adaptive line search strategy. We also considered methods that approximate the gradient using the  smoothing methods with steepest descent search directions and a constant step size parameter. Finally, as a benchmark, we compared the performance of the aforementioned methods against the popular DFOTR algorithm \cite{ASBandeira_LNVicente_KScheinberg_2011}.

\begin{figure}[ht]
    \centering
    \begin{subfigure}{.32\textwidth}
        \centering
        \includegraphics[trim=35 20 30 20,clip,width=0.95\linewidth]{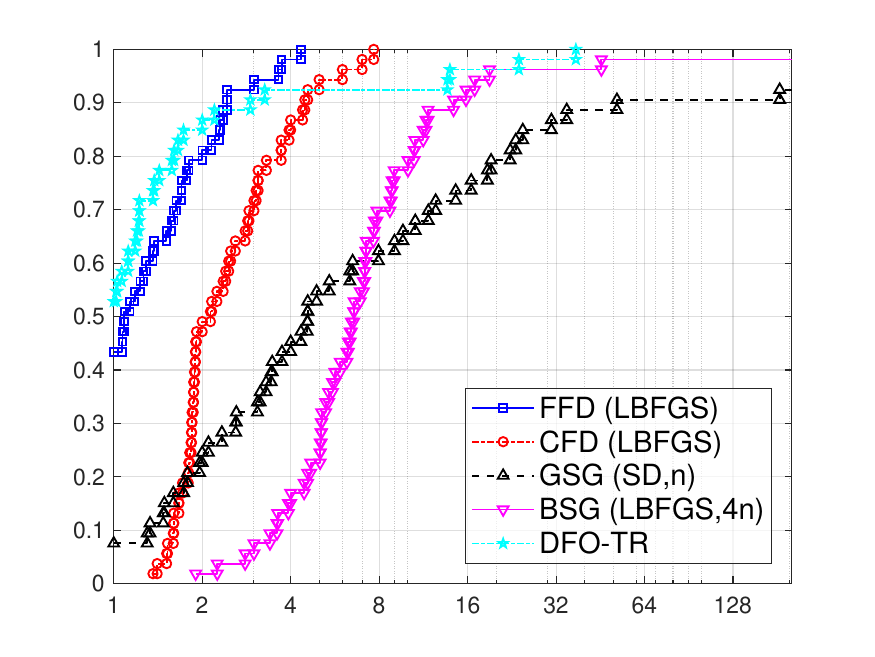}
        \caption{  $\tau = 10^{-1}$}
    \end{subfigure}%
    \begin{subfigure}{.32\textwidth}
        \centering
        \includegraphics[trim=35 20 30 20,clip,width=0.95\linewidth]{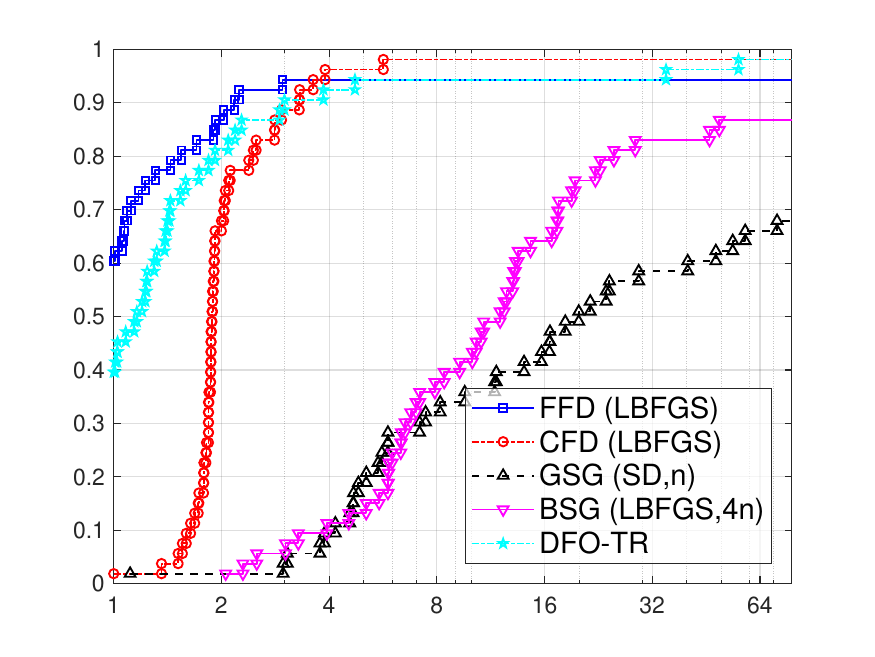}
        \caption{  $\tau = 10^{-3}$}
    \end{subfigure}%
    \begin{subfigure}{.32\textwidth}
        \centering
\includegraphics[trim=35 20 30 20,clip,width=0.95\linewidth]{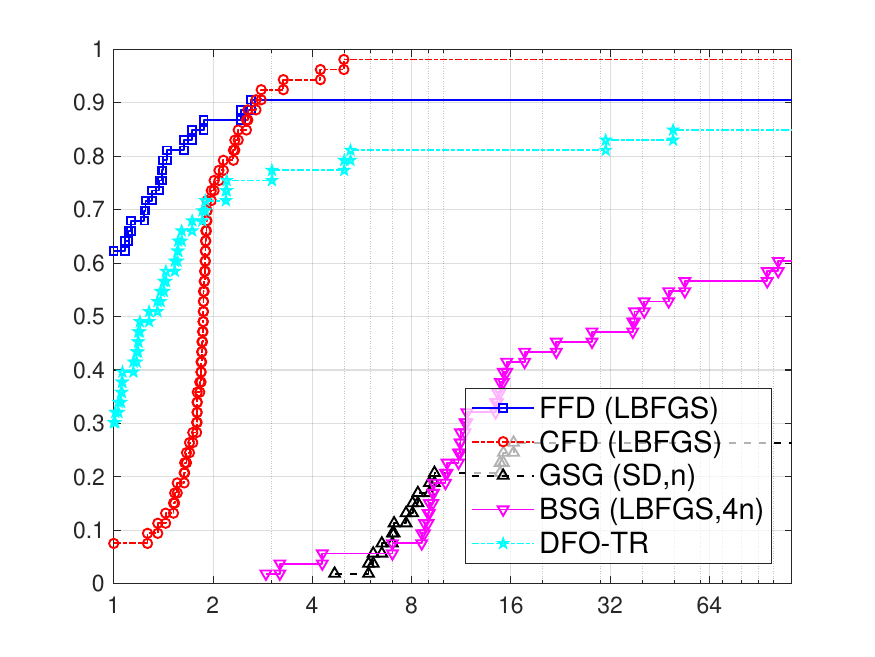}
        \caption{  $\tau = 10^{-5}$}
    \end{subfigure} \\   
    \vspace{0.25cm} 
    \begin{subfigure}{.32\textwidth}
        \centering
\includegraphics[trim=35 20 30 20,clip,width=0.95\linewidth]{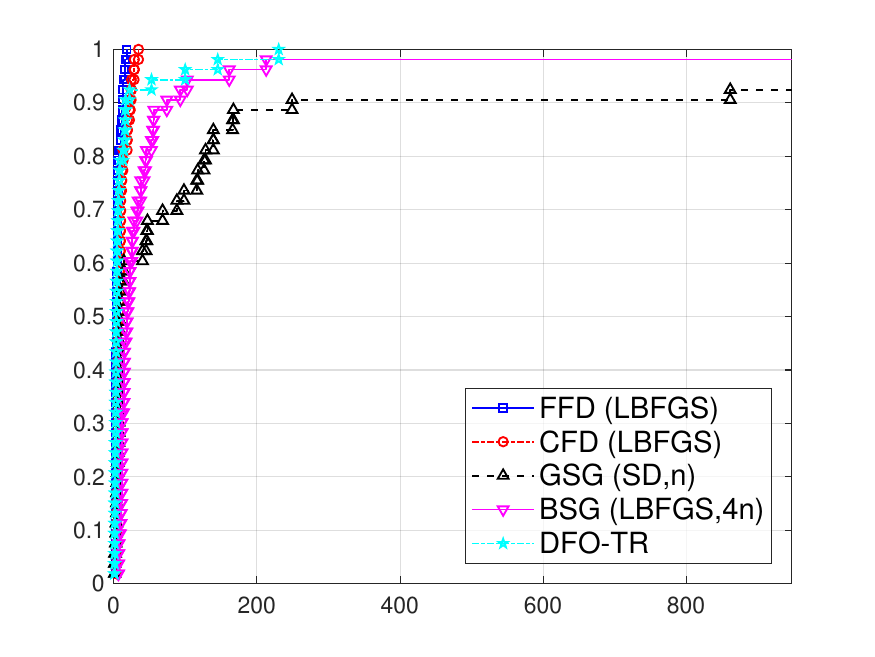}
        \caption{  $\tau = 10^{-1}$}
    \end{subfigure}%
    \begin{subfigure}{.32\textwidth}
        \centering
\includegraphics[trim=35 20 30 20,clip,width=0.95\linewidth]{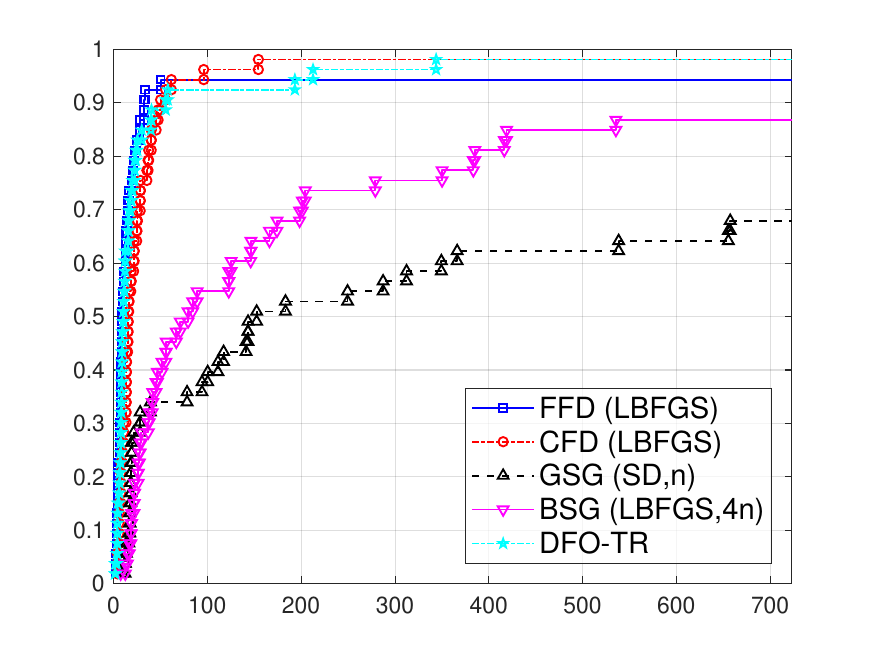}
        \caption{  $\tau = 10^{-3}$}
    \end{subfigure}%
    \begin{subfigure}{.32\textwidth}
        \centering
\includegraphics[trim=35 20 30 20,clip,width=0.95\linewidth]{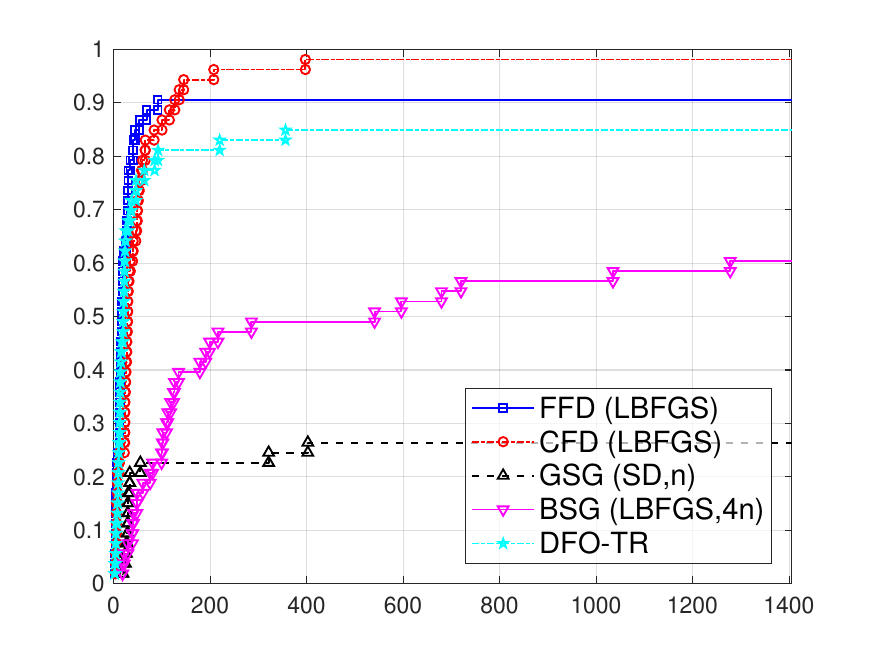}
        \caption{$\tau = 10^{-5}$}
    \end{subfigure} 
    \caption{Performance and data profiles for best variant of each method. Top row: \emph{Performance profiles}, where the x-axis represents \emph{performance ratio}; Bottom row: \emph{Data profiles}, where the x-axis represents the \emph{number of function evaluations divided by ($n+1$)}. See \cite{more2009benchmarking,DolaMore02} for more details about performance and data profiles.}  
    \label{fig: final}
\end{figure}

We tested the algorithms on the problems described in \cite{more2009benchmarking} ($53$ problems), and illustrate the performance of the methods using performance and data profiles \cite{more2009benchmarking,DolaMore02}. 
Each curve in the profile  displayed in Figure \ref{fig: final} corresponds to one algorithm's overall performance on the entire problem set. Roughly speaking, larger area under the curve indicates  better overall performance. We compare the performance of the {\em best variant} of each algorithm for different accuracy levels. For a given accuracy level $\tau \geq 0$ and problem, a method was deemed successful if for some iterate $x_k$, $\frac{f(x_0) - f(x_k)}{f(x_0) - f_L} \geq 1-\tau$ was satisfied, where $f_L$ is the best (lowest) function value achieved by any method; see \cite{more2009benchmarking} for more details.
We selected only the \emph{best performers} amongst different possible variants by first comparing the variants among themselves. For example, for FFD and CFD the LBFGS variant outperformed the steepest descent variant.  
With regards to the smoothing methods, GSG with $N=n$ samples per iteration and steepest descent search directions was the best performer out of all GSG methods, and BSG with $N=4n$ and LBFGS performed best among all BSG variants. For all the types of gradient approximations, the variants that performed the best used an adaptive step length procedure. We omit illustrations of these comparison for brevity. Finally, in Figures \ref{fig: fd_ls} and \ref{fig: fd_smoothed} we compare the adaptive step size methods versus the constant step size variants.

\begin{figure}[ht]
    \centering
    \begin{subfigure}{.32\textwidth}
        \centering
        \includegraphics[trim=35 21 30 20,clip,width=0.95\linewidth]{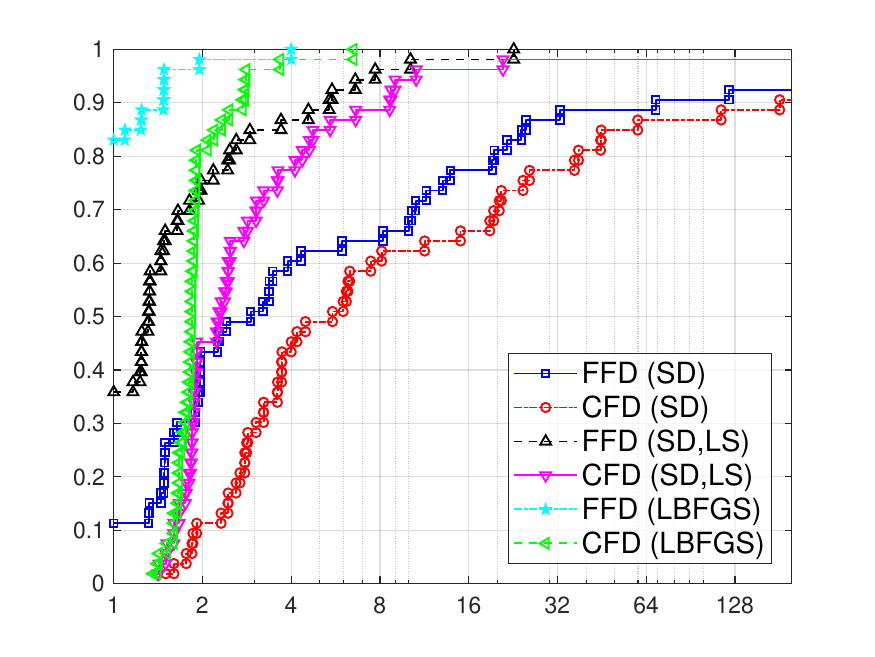}
        \caption{  $\tau = 10^{-1}$}
    \end{subfigure}%
    \begin{subfigure}{.32\textwidth}
        \centering
        \includegraphics[trim=35 21 30 20,clip,width=0.95\linewidth]{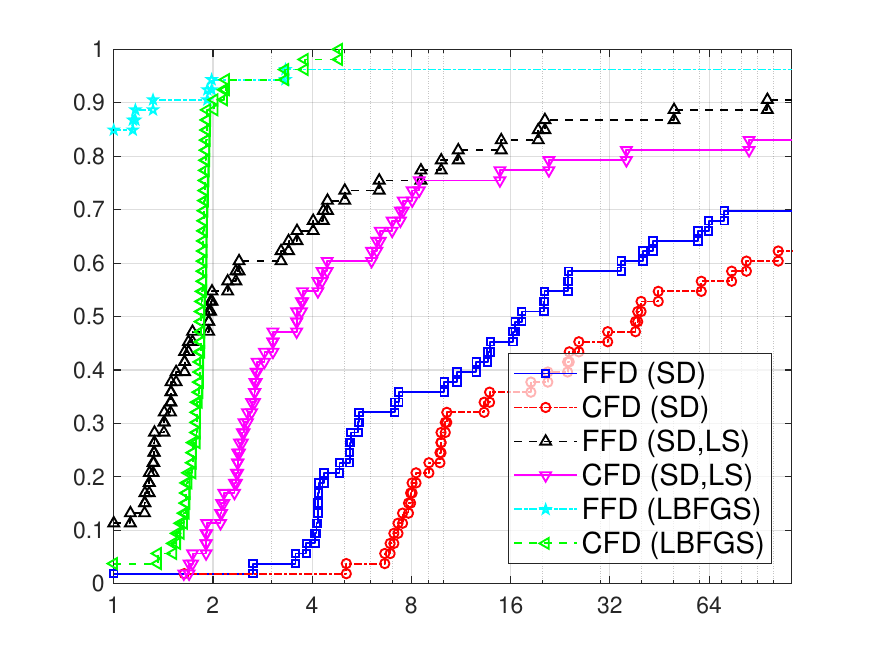}
        \caption{  $\tau = 10^{-3}$}
    \end{subfigure}%
    \begin{subfigure}{.32\textwidth}
        \centering
        \includegraphics[trim=35 21 30 20,clip,width=0.95\linewidth]{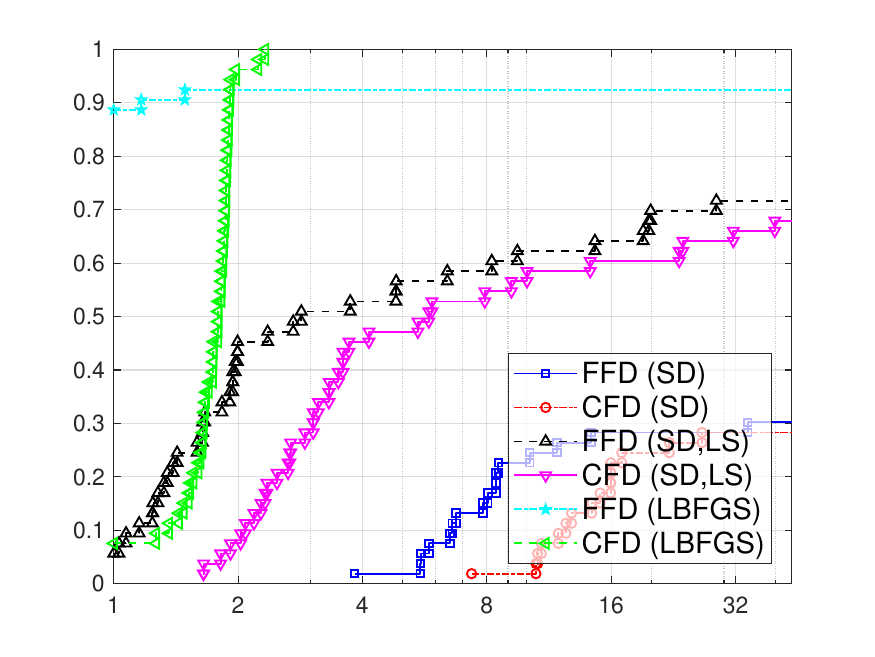}
        \caption{  $\tau = 10^{-5}$}
    \end{subfigure}
    \caption{Performance profiles for Finite Difference variants with steepest descent (SD) and LBFGS search directions; SD with and without a line search (LS). }    
    \label{fig: fd_ls}
\end{figure}

\begin{figure}[ht]
    \centering
    \begin{subfigure}{.32\textwidth}
        \centering
        \includegraphics[trim=35 21 30 20,clip,width=0.95\linewidth]{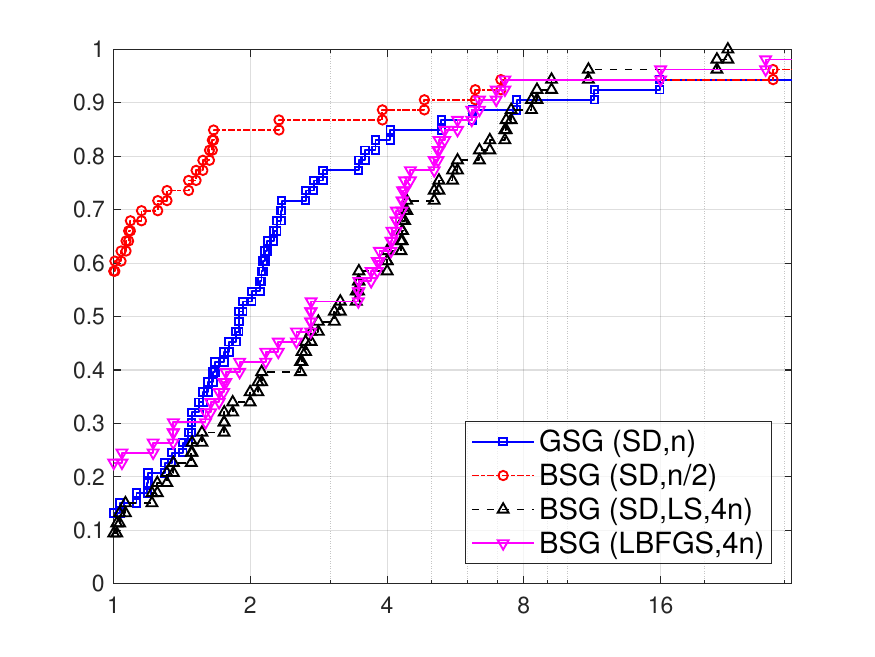}
        \caption{  $\tau = 10^{-1}$}
    \end{subfigure}%
    \begin{subfigure}{.32\textwidth}
        \centering
        \includegraphics[trim=35 21 30 20,clip,width=0.95\linewidth]{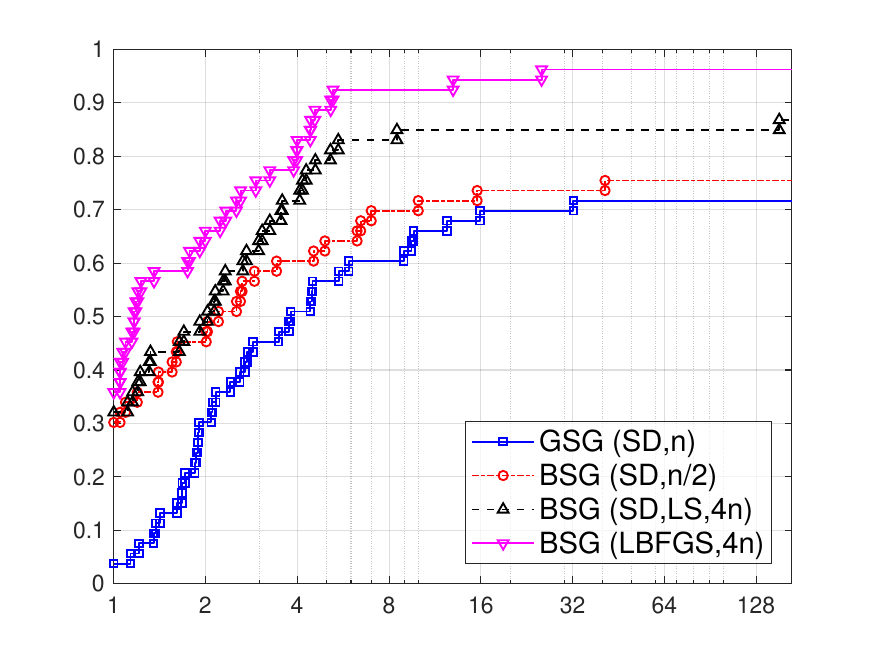}
        \caption{  $\tau = 10^{-3}$}
    \end{subfigure}%
    \begin{subfigure}{.32\textwidth}
        \centering
        \includegraphics[trim=35 21 30 20,clip,width=0.95\linewidth]{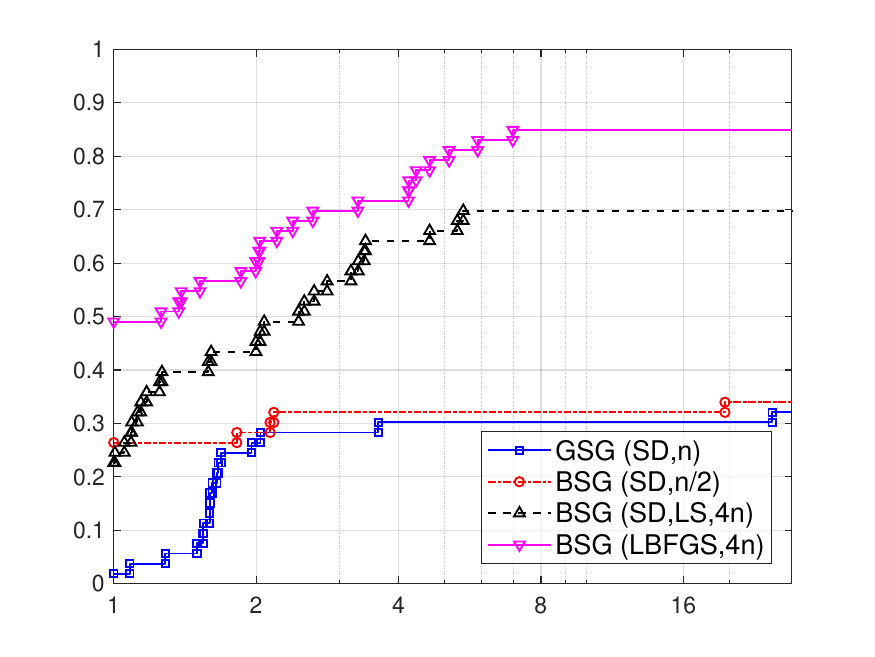}
        \caption{  $\tau = 10^{-5}$}
    \end{subfigure}
    \caption{Performance profiles for best smoothed variants with steepest descent (SD) and LBFGS search directions; SD with and without a line search (LS).}    
    \label{fig: fd_smoothed}
\end{figure}

\paragraph{Reinforcement Learning Tasks \cite{brockman2016openai}} 
In this section, we investigate the performance of the methods on noisy optimization problems. Specifically, we present numerical results for reinforcement learning tasks from $\mathrm{OpenAI}$ $\mathrm{Gym}$ library \cite{brockman2016openai}. We compare gradient based methods, where the gradients are approximated as follows:
\begin{enumerate}
	\item Forward Finite Differences (\texttt{FFD (SD)}),
	\item Linear Interpolation (\texttt{Interpolation (SD)}) and (\texttt{Interpolation (SD, LS)}),
	\item Gaussian Smoothed Gradients (\texttt{GSG (SD)}).
\end{enumerate}
For the methods that use interpolation, we implemented two different step length strategies: $(1)$ fixed step length $\alpha_k = \alpha$, and $(2)$ step length chosen via a line search.

In Figure \ref{fig: RL}, we show the average (solid lines) and max/min (dashed lines) over a number of runs. We can see that in some experiments FD did not perform well compared to other methods. \ks{This happens because FD being deterministic method may get stuck in local minima, while adding some randomness helps to escape those.} While our theory is the same for FD and Interpolation, our experiments show that for these tasks, choosing $u_i$ to be orthonormal but random helps the algorithm to avoid getting stuck in local maxima. We observe that the Interpolation method is superior to the GSG and that line-search provides some improvements over a manually tuned choice of $\alpha_k$. More details are given in the Appendix \ref{sec:rl_details}.
\begin{figure}[ht]
    \centering
    \begin{subfigure}{.32\textwidth}
        \centering
        \includegraphics[trim=5 3 40 5,clip,width=0.95\linewidth]{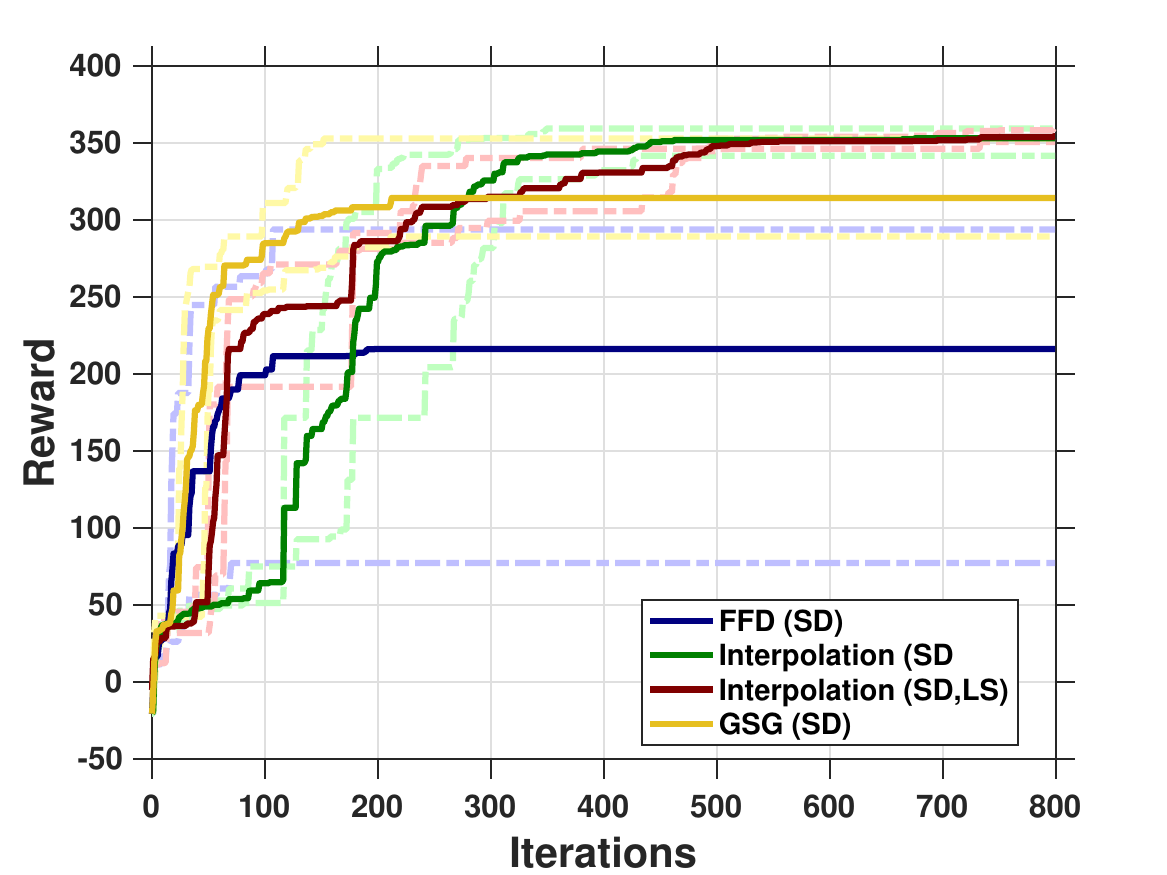}
        \caption{  Swimmer }
    \end{subfigure}%
    \begin{subfigure}{.32\textwidth}
        \centering
        \includegraphics[trim=0 3 35 5,clip,width=0.95\linewidth]{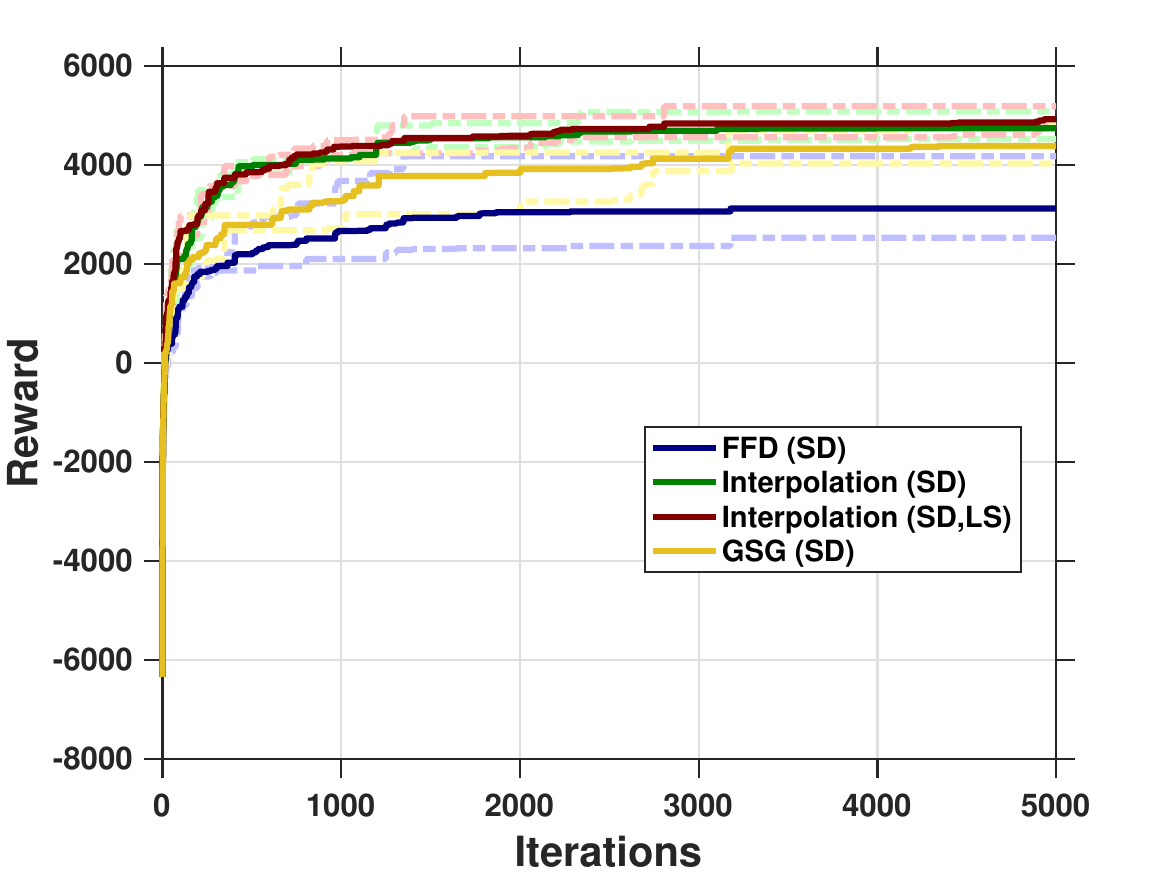}
        \caption{ HalfCheetah }
    \end{subfigure}%
    \begin{subfigure}{.32\textwidth}
        \centering
        \includegraphics[trim=5 3 35 5,clip,width=0.95\linewidth]{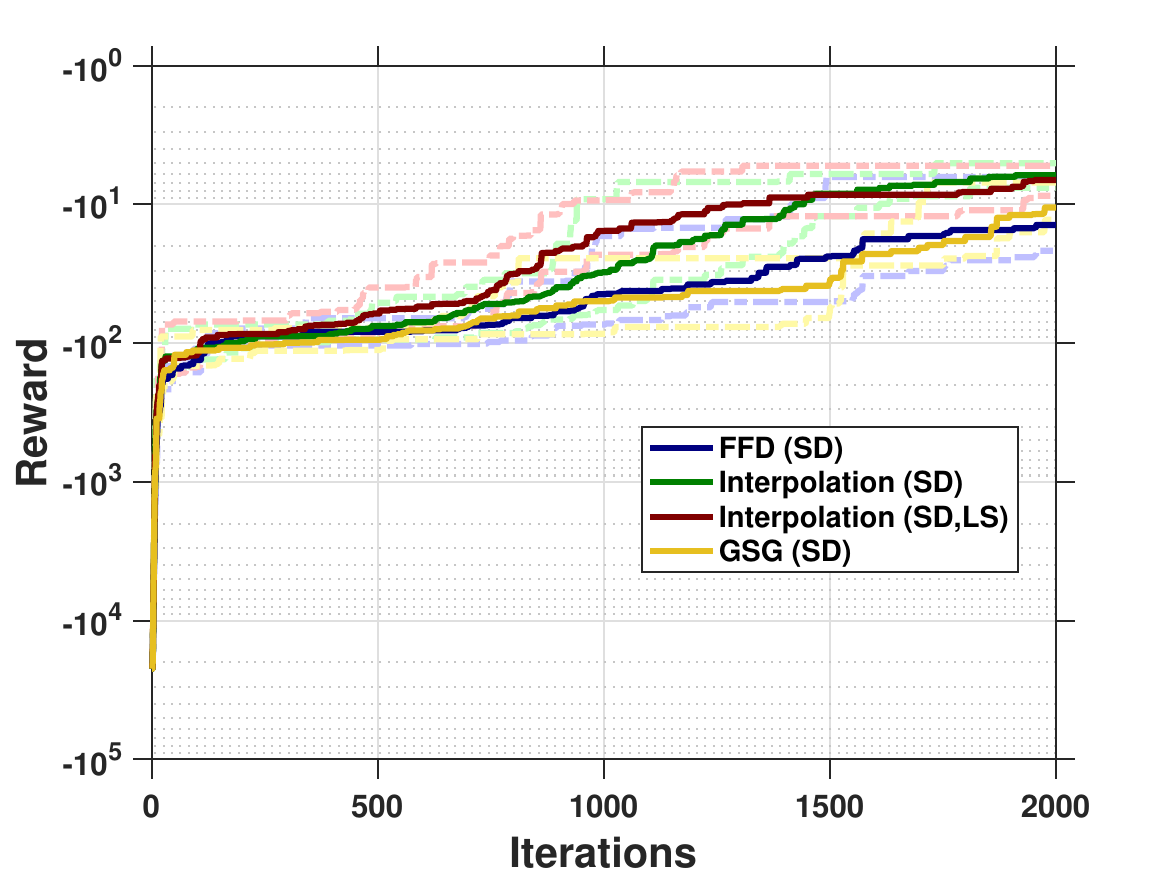}
        \caption{ Reacher }
    \end{subfigure}
    \caption{Performance of Methods on Reinforcement Learning Tasks. }    
    \label{fig: RL}
\end{figure}

\pagebreak
\section{Final Remarks} \label{sec:finrem}
We have shown that several derivative-free techniques for approximating gradients provide comparable estimates under reasonable assumptions. More specifically, we analyzed the gradient approximations constructed via finite differences, linear interpolation, Gaussian smoothing and smoothing on a unit sphere using functions values with bounded noise.  For each method, we derived bounds on the number of samples and the sampling radius which guarantee favorable convergence properties for a line search or fixed step size descent method. These approximations can be used effectively in conjunction with a line search algorithm, possibly with L-BFGS search directions, provided they are sufficiently accurate. Our theoretical results, and related numerical experiments, show that finite difference and interpolation methods are much more efficient than smoothing methods in providing good gradient approximations. The techniques presented in this paper can be extended to other distributions of the random vector $u$, as long as individual components of $u$ are symmetric and independent and identically distributed  random variables; e.g., the distribution used for constructing gradient approximations in \cite{spall2000adaptive}.

\bibliographystyle{plain}
\bibliography{Katya}

\appendix
\clearpage
\section{Derivations}
\subsection{Derivation of \eqref{eq:GSG_bound1}}	\label{app:GSG_bound1}
\begin{align*}
	\| \nabla F(x) - \nabla \phi(x))\| &= \left\| \mathbb{E}_{u \sim \cN(0,I)} \left[ \frac{1}{\sigma} f(x+\sigma u) u \right] - \nabla \phi(x)\right\| \\
	&= \left\| \mathbb{E}_{u \sim \cN(0,I)} \left[ \frac{\phi(x+\sigma u) + \epsilon(x+\sigma u)}{\sigma} u \right] - \nabla \phi(x)\right\| \\
	&= \left\| \mathbb{E}_{u \sim \cN(0,I)} \left[ \nabla \phi(x+\sigma u) - \nabla \phi(x) \right] + \mathbb{E}_{u \sim \cN(0,I)} \left[ \frac{\epsilon(x+\sigma u)}{\sigma} u \right]\right\| \\
	&\le \left\| \mathbb{E}_{u \sim \cN(0,I)} \left[ \nabla \phi(x+\sigma u) - \nabla \phi(x) \right] \right\| + \left\| \mathbb{E}_{u \sim \cN(0,I)} \left[ \frac{\epsilon(x+\sigma u)}{\sigma} u \right]\right\| \\
	&\le \mathbb{E}_{u \sim \cN(0,I)} [\|\nabla \phi(x+\sigma u) - \nabla \phi(x)\| ] + \mathbb{E}_{u \sim \cN(0,I)} \left[ \left\| \frac{\epsilon(x+\sigma u)}{\sigma} u \right\| \right] \\
	&\leq L \sigma \mathbb{E}_{u \sim \cN(0,I)} [\| u \|] + \frac{\epsilon_f}{\sigma} \mathbb{E}_{u \sim \cN(0,I)} [\| u \|] \\
	&= \left( L \sigma + \frac{\epsilon_f}{\sigma} \right) \sqrt{2}\frac{\Gamma(\frac{n+1}{2})}{\Gamma(\frac{n}{2})} \leq \sqrt{n}L \sigma + \frac{\sqrt{n} \epsilon_f}{\sigma}.
\end{align*}

\subsection{Derivation of \eqref{eq:cGSG_bound1}}	\label{app:cGSG_bound1}
\begin{align*}
	& \| \nabla F(x) - \nabla \phi(x))\| \\
	&\qquad ={} \left\| \mathbb{E}_{u \sim \cN(0,I)} \left[ \frac{\phi(x+\sigma u) + \epsilon(x+\sigma u) - \phi(x+\sigma u) - \epsilon(x+\sigma u)}{2\sigma} u \right] - \nabla \phi(x)\right\| \\
	&\qquad ={} \left\| \mathbb{E}_{u \sim \cN(0,I)} \left[\frac{1}{2}\nabla \phi(x+\sigma u) + \frac{1}{2}\nabla \phi(x-\sigma u) - \nabla \phi(x)\right] + \mathbb{E}_{u \sim \cN(0,I)} \left[ \frac{\epsilon(x+\sigma u) - \epsilon(x+\sigma u)}{2\sigma} u \right] \right\| \\
	&\qquad\le{} \frac{1}{2}\mathbb{E}_{u \sim \cN(0,I)} \left[\|\left(\nabla \phi(x+\sigma u) - \nabla \phi(x)\right) - (\nabla \phi(x) - \phi(x-\sigma u)) \| \right]\\
	&\qquad \qquad+ \mathbb{E}_{u \sim \cN(0,I)} \left[\left\| \frac{\epsilon(x+\sigma u) - \epsilon(x+\sigma u)}{2\sigma} u \right\|\right] \\
	&\qquad \le{} \frac{1}{2}\mathbb{E}_{u \sim \cN(0,I)} \left[\|\left(\nabla \phi(x+\sigma u) - \nabla \phi(x)\right) - (\nabla \phi(x) - \phi(x-\sigma u)) \| \right] + \mathbb{E}_{u \sim \cN(0,I)} \left[ \frac{\epsilon_f}{\sigma} \|u\| \right] \\
	&\qquad ={} \frac{1}{2}\mathbb{E}_{u \sim \cN(0,I)} \left[ \|\left(\nabla^2 \phi(x+ \xi_1 u) - \nabla^2 \phi(x - \xi_2 u )\right) \sigma u \| \right] + \mathbb{E}_{u \sim \cN(0,I)} \left[ \frac{\epsilon_f}{\sigma} \|u\| \right], 
\end{align*}
for some $0 \le \xi_1 \le \sigma$ and $0 \le \xi_2 \le \sigma$ by the intermediate value theorem. Then 
\begin{align*}
	\| \nabla F(x) - \nabla \phi(x))\| 
	\le{}& \frac{1}{2} \mathbb{E}_{u \sim \cN(0,I)} \left[\| \nabla^2 \phi(x+ \xi_1 u) - \nabla^2 \phi(x - \xi_2 u ) \| \| \sigma u \| \right] + \mathbb{E}_{u \sim \cN(0,I)} \left[ \frac{\epsilon_f}{\sigma} \|u\| \right] \\
	\le{}& \frac{1}{2} \mathbb{E}_{u \sim \cN(0,I)} \left[M \|\xi_1 u + \xi_2 u\| \cdot \sigma \|u\| \right] + \mathbb{E}_{u \sim \cN(0,I)} \left[ \frac{\epsilon_f}{\sigma} \|u\| \right] \\ 
	={}& \frac{1}{2} \mathbb{E}_{u \sim \cN(0,I)} \left[|\xi_1 + \xi_2| \cdot \|u\|^2 M \sigma \right] + \mathbb{E}_{u \sim \cN(0,I)} \left[ \frac{\epsilon_f}{\sigma} \|u\| \right] \\
	\le{}& n M \sigma^2 + \frac{\sqrt{n} \epsilon_f}{\sigma} .
\end{align*}

\subsection{Derivation of \eqref{eq:mvn}}	\label{app:mvn} 
For the first equality, let $A = \mathrm{E}_{u \sim \mathcal{N}(0,I)} (a^\intercal u)^2 u u ^\intercal$. 
Then for any $(i,j) \in \{1,2,\dots, n\}^2$ with $i \neq j$, we have 
\[ \begin{aligned}
    A_{ij} &= \mathrm{E} \left\{ (a^\intercal u )^2 u _i u _j \right\} \\
    &= \sum_{k=1}^n \sum_{l=1}^n \mathrm{E} \left\{ a_k u _k a_l u _l u _i u _j \right\} \\
    &= \sum_{k=i} \sum_{l=i} \mathrm{E} \left\{ a_k u _k a_l u _l u _i u _j \right\}
    + \sum_{k\neq i} \sum_{l=i} \mathrm{E} \left\{ a_k u _k a_l u _l u _i u _j \right\}
    + \sum_{k=i} \sum_{l\neq i} \mathrm{E} \left\{ a_k u _k a_l u _l u _i u _j \right\} 
    + \sum_{k\neq i} \sum_{l\neq i}  \mathrm{E} \left\{ a_k u _k a_l u _l u _i u _j \right\} \\
    &= \mathrm{E} \left\{ a_i^2 u _i^3 u _j \right\}
    + \sum_{k\neq i} \mathrm{E} \left\{ a_k a_i u _k u _i^2 u _j \right\}
    + \sum_{l\neq i} \mathrm{E} \left\{a_i a_l u _l u _i^2 u _j \right\} 
    + \mathrm{E} \left\{ u _i \right\} \sum_{k\neq i} \sum_{l\neq i}  \mathrm{E} \left\{ a_k u _k a_l u _l u _j \right\} \\
    &= 0 + \sum_{k\neq i} \mathrm{E} \left\{ a_k a_i u _k u _i^2 u _j \right\}
    + \sum_{l\neq i} \mathrm{E} \left\{a_i a_l u _l u _i^2 u _j \right\} + 0 \\
    &= \mathrm{E} \left\{ a_i a_j u _i^2 u _j^2 \right\} + \mathrm{E} \left\{ a_i a_j u _i^2 u _j^2 \right\} \\
    &= 2 a_i a_j.
\end{aligned} \]
For any $i \in \{1,2,\dots,n\}$, 
\[ \begin{aligned}
    A_{ii} &= \mathrm{E} \left\{ (a^\intercal u )^2 u _i^2 \right\} \\
    &= \sum_{k=i} \sum_{l=i} \mathrm{E} \left\{ a_k u _k a_l u _l u _i^2 \right\}
    + \sum_{k\neq i} \sum_{l=i} \mathrm{E} \left\{ a_k u _k a_l u _l u _i^2 \right\}
    + \sum_{k=i} \sum_{l\neq i} \mathrm{E} \left\{ a_k u _k a_l u _l u _i^2 \right\} 
    + \sum_{k\neq i} \sum_{l\neq i}  \mathrm{E} \left\{ a_k u _k a_l u _l u _i^2 \right\} \\
    &= \mathrm{E} \left\{ a_i^2 u _i^4 \right\}
    + \sum_{k\neq i} \mathrm{E} \left\{ a_k a_i u _k u _i^3 \right\}
    + \sum_{l\neq i} \mathrm{E} \left\{a_i a_l u _l u _i^3 \right\} 
    + \mathrm{E} \left\{ u _i^2 \right\} \sum_{k\neq i} \sum_{l\neq i}  \mathrm{E} \left\{ a_k u _k a_l u _l \right\} \\
    &= 3a_i^2 + 0 + 0 + 1 \times \sum_{k=l\neq i} \mathrm{E} \left\{ a_k u _k a_l u _l \right\}
    = 3a_i^2 + \sum_{k\neq i} \mathrm{E} \left\{ a_k^2 u_k^2 \right\} \\
    &= 3a_i^2 + \sum_{k\neq i} a_k^2 = 2 a_i^2 + \sum_{k=1}^n a_k^2.
\end{aligned}\]
Then by writting the result in matrix format, we get $\mathrm{E}_{u \sim \mathcal{N}(0,I)} \left[ (a^\intercal u)^2 u u ^\intercal \right] = a^\intercal a I + 2 a a^\intercal$. 
This result is valid for any distribution for $u$ such that $u_i$, $i \in \{1,2,\dots,n\}$ are i.i.d. and has $\mathbb{E} u_i = 0$ and $\mathbb{E} u_i^2=1$ for all $i \in \{1,2,\dots,n\}$. 

For the second equality, since the possibility density function of $\mathcal{N}(0, I)$ is even while $a^\intercal u \cdot \|u\|^k \cdot u u ^\intercal$ is an odd function, the expectation $\mathbb{E}_{u \sim \mathcal{N}({0},I)} \left[a^T u \cdot \|u\|^k \cdot u u^T \right]$ is zero. 

Because $\mathbb{E}_{u \sim \mathcal{N}({0},I)} \left[\|u\|^k u^\intercal u \right] = \mathbb{E}_{u \sim \mathcal{N}({0},I)} \left[\|u\|^{k+2} \right]$ is the $(k+2)$nd moment of a Chi distributed variable for all $k \in \mathbb{N}$, we have 
\[ \mathbb{E}_{u \sim \mathcal{N}({0},I)} \left[\|u\|^k u^\intercal u \right] = \frac{2^{1 + k/2} \Gamma((n+k+2)/2)}{\Gamma(n/2)}. 
\]
This value is also the trace of the matrix $\mathbb{E}_{u \sim \mathcal{N}({0},I)} \left[\|u\|^k u u^T \right]$. 
Considering all $n$ elements on the diagonal of this matrix are the same, we have 
\[
\mathbb{E}_{u \sim \mathcal{N}({0},I)} \left[\|u\|^k u u^T \right]= \frac{2^{1 + k/2} \Gamma((n+k+2)/2)}{n \Gamma(n/2)} I \text{ for } k = 0,1, 2, \dots .
\]
For even $k$, this quantity is equal to $\prod_{i=1}^{k/2} (n + 2i)$. 
For odd $k$, this quantity is equal to $\left[ \sqrt{2} \Gamma \left( \frac{n+1}{2} \right) \middle/ \Gamma \left( \frac{n}{2} \right) \right] \frac{1}{n} \prod_{i=1}^{(k+1)/2} (n + 2i - 1)$. Use the inequality $\sqrt{2} \left. \Gamma \left( \frac{n+1}{2} \right) \middle/ \Gamma \left( \frac{n}{2} \right) \right. \le \sqrt{n}$ for all $n \in \mathbb{N}$, we have 
\[\mathbb{E}_{u \sim \mathcal{N}({0},I)} \left[\|u\|^k u u^T \right]  \preceq (n+1)(n+3)\cdots(n+k) \cdot n^{-0.5} I \text{ for } k = 1, 3, 5, \dots . 
\]

\subsection{Derivation of \eqref{eq:lower_req_eq1}}	\label{app:lb_1} 

\begin{align*}
\mathbb{E} \left[\|g(x) - \nabla F(x)\|^2\right] 
={}& \mathbb{E} \left[ \left \|\frac{1}{N} \sum_{i=1}^N \frac{f(x+\sigma u_i) - f(x)}{\sigma} u_i - \nabla F(x) \right\|^2 \right] \\ 
={}& \frac{1}{N} \mathbb{E}_{u \sim \cN(0,I)} \left[ \left(\frac{f(x+\sigma u) - f(x)}{\sigma} \right)^2 u^\intercal u \right] - \frac{1}{N} \nabla F(x)^\intercal  \nabla F(x) \\
={}& \frac{1}{N} \mathbb{E}_{u \sim \cN(0,I)} \left[ \left(a^\intercal u\right)^2 u^\intercal u \right] - \frac{1}{N} a^\intercal  a \\ 
={}& \frac{1}{N} (n + 1) a^\intercal a. 
\end{align*}

\subsection{Derivation of \eqref{eq:lower_req_eq2}}	\label{app:lb_2}

The expression for $E\left[\|g(x) - \nabla F(x)\|^4\right]$ is a sum of $N^4$ terms with each term being the product of four vectors: 
\[ 
\mathbb{E} \left[\|g(x) - \nabla F(x)\|^4\right] = \frac{1}{N^4} \mathbb{E} \left[ \sum_{i=1}^N \sum_{j=1}^N \sum_{k=1}^N \sum_{l=1}^N \prod_{w \in \{i,j,k,l\}} \left( \frac{f(x+\sigma u_w) - f(x)}{\sigma} u_w - \nabla F(x) \right) \right], 
\]
where $\prod$ denotes the operation which is a product of the inner products of the two pairs of vectors. Specifically,  given  four vectors $a_1, a_2, a_3, a_4\in \R^n$, $\prod_{i \in \{1,2,3,4\}} a_i = (a_1^\intercal a_2) \cdot (a_3^\intercal a_4)$ and $\prod_{i \in \{1,1,2,2\}} a_i = (a_1^\intercal a_1) \cdot (a_2^\intercal a_2)$. 

We first observe that $\prod_{w \in \{i,j,k,l\}} \left( \frac{f(x+\sigma u_w) - f(x)}{\sigma} u_w - \nabla F(x) \right)=0$
whenever one of the indices $(i,j,k,l)$ is different from all of the other ones. This is because all $u_w$,  for  $w \in \{i,j,k,l\}$ are independent of each other if their indices are different and 
\[
\mathbb{E} \left[ \frac{f(x+\sigma u_w) - f(x)}{\sigma} u_w - \nabla F(x) \right] = 0. 
\]

Thus we need only to consider the terms having one of the following conditions: 
\begin{enumerate}
	\item $i=j=k=l$; 
	\item $i=j \neq k=l$; 
	\item $i=k \neq j=l$; 
	\item $i=l \neq j=k$. 
\end{enumerate}


First we consider the case: $i=j \neq k=l$, which occurs when $N>1$. 
\[ \begin{aligned}
&\mathbb{E} \left[ \sum_{i=1}^N \sum_{k=1, k\neq i}^N \prod_{w \in \{i,i,k,k\}} \left( \frac{f(x+\sigma u_w) - f(x)}{\sigma} u_w - \nabla F(x) \right) \right] \\
&\qquad ={} \sum_{i=1}^N \mathbb{E} \left[ \left\| \frac{f(x+\sigma u_i) - f(x)}{\sigma} u_i - \nabla F(x) \right\|^2 \right] \cdot \sum_{k=1,k\neq i}^N \mathbb{E} \left[ \left\| \frac{f(x+\sigma u_k) - f(x)}{\sigma} u_k - \nabla F(x) \right\|^2 \right] \\
&\qquad ={} N(N-1) \left[ (n + 1) a^\intercal a \right]^2. 
\end{aligned} \]

We now consider  two other cases: $i=k \neq j=l$ and $i=l \neq j=k$ that are essentially the same. We have 
\[ \begin{aligned}
&\mathbb{E} \left[ \sum_{i=1}^N \sum_{k=1, k\neq i}^N \prod_{w \in \{i,k,i,k\}} \left( \frac{f(x+\sigma u_w) - f(x)}{\sigma} u_w - \nabla F(x) \right) \right] \\
&\qquad ={} \sum_{i=1}^N \sum_{k=1, k\neq i}^N \mathbb{E} \left\{ \left[ \left( \frac{f(x+\sigma u_i) - f(x)}{\sigma} u_i - \nabla F(x) \right)^\intercal \left( \frac{f(x+\sigma u_k) - f(x)}{\sigma} u_k - \nabla F(x) \right) \right]^2 \right\} \\
&\qquad ={} \sum_{i=1}^N \sum_{k=1, k\neq i}^N \mathbb{E} \left( \left\{ \left[ (a^\intercal u_i) u_i - a \right]^\intercal \left[ (a^\intercal u_k) u_k - a \right] \right\}^2 \right) \\
&\qquad ={} \sum_{i=1}^N \sum_{k=1, k\neq i}^N \mathbb{E} \left( \left[ (a^\intercal u_i) (a^\intercal u_k) (u_i^\intercal u_k)  - (a^\intercal u_i)^2 - (a^\intercal u_k)^2 + a^\intercal a \right]^2 \right) \\
&\qquad ={} \sum_{i=1}^N \sum_{k=1, k\neq i}^N \mathbb{E} \left[ \begin{array}{rl}
	&(a^\intercal u_i)^2 (a^\intercal u_k)^2 (u_i^\intercal u_k)^2 + (a^\intercal u_i)^4 + (a^\intercal u_k)^4 + (a^\intercal a)^2 \\
	+& 2 (a^\intercal a) (a^\intercal u_i) (a^\intercal u_k) (u_i^\intercal u_k) - 2(a^\intercal a) (a^\intercal u_i)^2 - 2(a^\intercal a) (a^\intercal u_k)^2 \\
	-& 2 (a^\intercal u_i)^3 (a^\intercal u_k) (u_i^\intercal u_k) - 2 (a^\intercal u_i) (a^\intercal u_k)^3 (u_i^\intercal u_k) + 2 (a^\intercal u_i)^2 (a^\intercal u_k)^2
\end{array} \right]  \\
&\qquad ={} \sum_{i=1}^N \sum_{k=1, k\neq i}^N \left[ \begin{array}{rl}
	& (n+8) (a^\intercal a)^2 + 3(a^\intercal a)^2 + 3(a^\intercal a)^2  + (a^\intercal a)^2 \\
	+& 2 (a^\intercal a)^2 - 2(a^\intercal a)^2 - 2(a^\intercal a)^2 \\
	-& 6 (a^\intercal a)^2 - 6 (a^\intercal a)^2 + 2 (a^\intercal a)^2
\end{array} \right] \\
&\qquad ={} \sum_{i=1}^N \sum_{k=1, k\neq i}^N (n+3) (a^\intercal a)^2 = N(N-1) (n+3) (a^\intercal a)^2
\end{aligned} \]

Finally, we have the $i=j=k=l$ case: 
\[ \begin{aligned}
 &\mathbb{E} \left[ \sum_{i=1}^N \prod_{w \in \{i,i,i,i\}} \left( \frac{f(x+\sigma u_w) - f(x)}{\sigma} u_w - \nabla F(x) \right) \right] \\
&\qquad ={} N \mathbb{E}_{u \sim \cN(0,I)} \left[ \left\| \frac{f(x+\sigma u) - f(x)}{\sigma} u - \nabla F(x) \right\|^4 \right] \\
&\qquad ={} N \mathbb{E}_{u \sim \cN(0,I)} \left\{ \left[ \left(\frac{f(x+\sigma u) - f(x)}{\sigma}\right)^2 u^\intercal u - 2 \left(\frac{f(x+\sigma u) - f(x)}{\sigma}\right) u^\intercal \nabla F(x) + \nabla F(x)^\intercal \nabla F(x) \right]^2 \right\} \\
&\qquad ={} N \mathbb{E}_{u \sim \cN(0,I)} \left[ \begin{array}{rl}
&\left(\frac{f(x+\sigma u) - f(x)}{\sigma}\right)^4 (u^\intercal u)^2 
+ 4 \left(\frac{f(x+\sigma u) - f(x)}{\sigma}\right)^2 \left( u^\intercal \nabla F(x) \right)^2 \\
+& \left( \nabla F(x)^\intercal \nabla F(x) \right)^2 
- 4 \left(\frac{f(x+\sigma u) - f(x)}{\sigma}\right)^3  (u^\intercal u) \left( u^\intercal \nabla F(x) \right) \\
-& 4 \left(\frac{f(x+\sigma u) - f(x)}{\sigma}\right) (u^\intercal \nabla F(x)) (\nabla F(x)^\intercal \nabla F(x)) \\ 
+& 2 \left(\frac{f(x+\sigma u) - f(x)}{\sigma}\right)^2 (u^\intercal u) (\nabla F(x)^\intercal \nabla F(x))
\end{array} \right] \\
&\qquad ={} N \mathbb{E}_{u \sim \cN(0,I)} \left[ \begin{array}{rl}
&\left( a^\intercal u \right)^4 (u^\intercal u)^2 
+ 4 \left( a^\intercal u \right)^2 \left( u^\intercal a \right)^2 
+ \left( a^\intercal a \right)^2 
- 4 \left( a^\intercal u \right)^3  (u^\intercal u) \left( u^\intercal a \right) \\
-& 4 \left( a^\intercal u \right) (u^\intercal a) (a^\intercal a) 
+ 2 \left( a^\intercal u \right)^2 (u^\intercal u) (a^\intercal a)
\end{array} \right] \\
&\qquad ={} N \left[ \begin{array}{rl}
& 3 (n+4)(n+6) (a^\intercal a)^2 + 12 (a^\intercal a)^2 + (a^\intercal a)^2 - 12 (n+4) (a^\intercal a)^2 \\
-& 4(a^\intercal a)^2 + 2 (n+2) (a^\intercal a)^2 
\end{array} \right] \\
&\qquad ={} N (3n^2 + 20n + 37) (a^\intercal a)^2
\end{aligned} \]
In summary, we have 
\[
\begin{aligned}
&N^4 \mathbb{E} \left[ \|g(x) - \nabla F(x)\|^4 \right] \\
&\qquad ={} N(N-1) (n+1)^2 (a^\intercal a)^2 + 2N(N-1)(n+3) (a^\intercal a)^2 +N(3n^2 + 20n + 37) (a^\intercal a)^2 \\
&\qquad ={} N(N-1)(n^2 + 4n + 7) (a^\intercal a)^2 + N(3n^2 + 20n + 37) (a^\intercal a)^2. 
\end{aligned}
\]

\subsection{Derivation of \eqref{eq:BSG_bound1}}	\label{app:BSG_bound1}

\begin{align*}
	\| \nabla F(x) - \nabla \phi(x))\| &= \left\| \mathbb{E}_{u \sim \mathcal{U}(\mathcal{S}(0,1))} \left[ \frac{n}{\sigma} f(x+\sigma u) u \right] - \nabla \phi(x)\right\| \\
	&= \left\| \mathbb{E}_{u \sim \mathcal{U}(\mathcal{S}(0,1))} \left[ \frac{n}{\sigma} (\phi(x+\sigma u) + \epsilon(x+\sigma u)) u \right] - \nabla \phi(x)\right\| \\
	&= \left\| \mathbb{E}_{u \sim \mathcal{U}(\mathcal{B}(0,1))} \left[ \nabla \phi(x+\sigma u) - \nabla \phi(x) \right] + \mathbb{E}_{u \sim \mathcal{U}(\mathcal{S}(0,1))} \left[ \frac{n\epsilon(x+\sigma u)}{\sigma} u \right]\right\| \\
	&\le \left\| \mathbb{E}_{u \sim \mathcal{U}(\mathcal{B}(0,1))} \left[ \nabla \phi(x+\sigma u) - \nabla \phi(x) \right] \right\| + \left\| \mathbb{E}_{u \sim \mathcal{U}(\mathcal{S}(0,1))} \left[ \frac{n\epsilon(x+\sigma u)}{\sigma} u \right]\right\| \\
	&\le \mathbb{E}_{u \sim \mathcal{U}(\mathcal{B}(0,1))} [\|\nabla \phi(x+\sigma u) - \nabla \phi(x)\| ] + \mathbb{E}_{u \sim \mathcal{U}(\mathcal{S}(0,1))} \left[ \left\| \frac{n\epsilon(x+\sigma u)}{\sigma} u \right\| \right] \\
	&\leq L \sigma \mathbb{E}_{u \sim \mathcal{U}(\mathcal{B}(0,1))} [\| u \|] + \frac{n \epsilon_f}{\sigma} \mathbb{E}_{u \sim \mathcal{U}(\mathcal{S}(0,1))} [\| u \|] \\
	&= L \sigma \frac{n}{n+1} + \frac{n \epsilon_f}{\sigma} \leq L \sigma + \frac{n \epsilon_f}{\sigma}.
\end{align*}

\subsection{Derivation of \eqref{eq:cBSG_bound1}}	\label{app:cBSG_bound1}

\begin{align*}
	&  \| \nabla F(x) - \nabla \phi(x))\| \\
	&\qquad ={} \left\| \mathbb{E}_{u \sim \mathcal{U}(\mathcal{S}(0, 1))} \left[ \frac{n}{2\sigma} (\phi(x+\sigma u) + \epsilon(x+\sigma u) - \phi(x+\sigma u) - \epsilon(x+\sigma u)) u \right] - \nabla \phi(x)\right\| \\
	&\qquad ={} \left\| \mathbb{E}_{u \sim \mathcal{U}(\mathcal{B}(0, 1))} \left[\frac{1}{2}\nabla \phi(x+\sigma u) + \frac{1}{2}\nabla \phi(x-\sigma u) - \nabla \phi(x)\right] + \mathbb{E}_{u \sim \mathcal{U}(\mathcal{S}(0, 1))} \left[ \frac{n}{2\sigma} (\epsilon(x+\sigma u) - \epsilon(x+\sigma u)) u \right] \right\| \\
	&\qquad \le{} \frac{1}{2}\mathbb{E}_{u \sim \mathcal{U}(\mathcal{B}(0, 1))} \left[\|\left(\nabla \phi(x+\sigma u) - \nabla \phi(x)\right) - (\nabla \phi(x) - \phi(x-\sigma u)) \| \right]\\
	&\qquad \qquad + \mathbb{E}_{u \sim \mathcal{U}(\mathcal{S}(0, 1))} \left[\left\| \frac{n}{2\sigma} (\epsilon(x+\sigma u) - \epsilon(x+\sigma u)) u \right\|\right] \\
	&\qquad \le{} \frac{1}{2}\mathbb{E}_{u \sim \mathcal{U}(\mathcal{B}(0, 1))} \left[\|\left(\nabla \phi(x+\sigma u) - \nabla \phi(x)\right) - (\nabla \phi(x) - \phi(x-\sigma u)) \| \right] + \mathbb{E}_{u \sim \mathcal{U}(\mathcal{S}(0, 1))} \left[ \frac{n \epsilon_f}{\sigma} \|u\| \right] \\
	&\qquad ={} \frac{1}{2}\mathbb{E}_{u \sim \mathcal{U}(\mathcal{B}(0, 1))} \left[ \|\left(\nabla^2 \phi(x+ \xi_1 u) - \nabla^2 \phi(x - \xi_2 u )\right) \sigma u \| \right] + \mathbb{E}_{u \sim \mathcal{U}(\mathcal{S}(0, 1))} \left[ \frac{n \epsilon_f}{\sigma} \|u\| \right], 
\end{align*}
for some $0 \le \xi_1 \le \sigma$ and $0 \le \xi_2 \le \sigma$ by the intermediate value theorem. Then 
\begin{align*}
	\| \nabla F(x) - \nabla \phi(x))\| 
	\le{}& \frac{1}{2} \mathbb{E}_{u \sim \mathcal{U}(\mathcal{B}(0, 1))} \left[\| \nabla^2 \phi(x+ \xi_1 u) - \nabla^2 \phi(x - \xi_2 u ) \| \| \sigma u \| \right] + \mathbb{E}_{u \sim \mathcal{U}(\mathcal{S}(0, 1))} \left[ \frac{n \epsilon_f}{\sigma} \|u\| \right] \\
	\le{}& \frac{1}{2} \mathbb{E}_{u \sim \mathcal{U}(\mathcal{B}(0, 1))} \left[M \|\xi_1 u + \xi_2 u\| \cdot \sigma \|u\| \right] + \mathbb{E}_{u \sim \mathcal{U}(\mathcal{S}(0, 1))} \left[ \frac{n \epsilon_f}{\sigma} \|u\| \right] \\ 
	={}& \frac{1}{2} \mathbb{E}_{u \sim \mathcal{U}(\mathcal{B}(0, 1))} \left[|\xi_1 + \xi_2| \cdot \|u\|^2 M \sigma \right] + \mathbb{E}_{u \sim \mathcal{U}(\mathcal{S}(0, 1))} \left[ \frac{n \epsilon_f}{\sigma} \|u\| \right] \\
	\le{}& M \sigma^2 + \frac{n \epsilon_f}{\sigma} .
\end{align*}

\subsection{Derivation of \eqref{eq:uos}}	\label{app:uos} 
The first and third equalities of \eqref{app:uos} comes from the first and third equalities of \eqref{eq:mvn}. 
Considering any vector of iid Gaussian $v$, dividing by its own norm, can be expressed as $v = \|v\| u$. Moreover, $\|v\|$ and $u$ are independent. Thus any homogeneous polynomial $p$ in the entries of $u$ of degree $k$ has the property that 
\[ \mathbb{E}_{u \sim \mathcal{U}(\mathcal{S}(0,1))} [ p(u) ] = \frac{\mathbb{E}_{v \sim \mathcal{N}(0,I)} [ p(v) ]}{\mathbb{E}_{v \sim \mathcal{N}(0,I)} \|v\|^k}. 
\]
Then 
\[ \begin{aligned}
\mathbb{E}_{u \sim \mathcal{U}(\mathcal{S}(0,1))} \left[(a^T u)^2 u u^T \right] &= \frac{\mathbb{E}_{u \sim \mathcal{N}(0,I))} \left[(a^T u)^2 u u^T \right]}{\mathbb{E}_{u \sim \mathcal{N}(0,I))} \| u \|^4} = \frac{a^T a I + 2 a a^T}{n(n+2)} \\
\mathbb{E}_{u \sim \mathcal{U}(\mathcal{S}(0,1))} \left[ \|u\|^k u u^T \right] &= \frac{\mathbb{E}_{u \sim \mathcal{N}(0,I))} \left[ \|u\|^k u u^T \right]}{\mathbb{E}_{u \sim \mathcal{N}(0,I))} \|u\|^{k+2}} = \frac{1}{n} I. 
\end{aligned} \]

The second equality of \eqref{eq:uos} being 0 follows the same argument as that for the second equality of \eqref{eq:mvn}. 

\newpage
\section{Additional Details: RL Experiments}\label{sec:rl_details}
In all RL experiments the blackbox function $f$ takes as input the parameters of the policy $\pi_{\theta}:\mathcal{S} \rightarrow \mathcal{A}$ which maps states to proposed actions. The output of $f$ is the total reward obtained by an agent applying that particular policy $\pi_{\theta}$ in the given environment.

To encode policies $\pi_{\theta}$, we used fully-connected feedforward neural networks with two hidden layers, each of $h=41$ neurons and with $\mathrm{tanh}$ nonlinearities. The matrices of connections were encoded by low-displacement rank neural networks (see \cite{choro2}), as in several recent papers on applying orthogonal directions in gradient estimation for ES methods in reinforcement learning.
We did not apply any additional techniques such as state/reward renormalization, ranking or filtering, in order to solely focus on the evaluation of the presented proposals.

All experiments were run with hyperparameter $\sigma=0.1$. 
Experiments that did not apply line search were run with the use of $\mathrm{Adam}$ optimizer and $\alpha = 0.01$. For line search experiments, we were using adaptive $\alpha$ that was updated via Armijo condition with Armijo parameter $c_{1}=0.2$ and backtracking factor $\tau=0.3$.

Finally, in order to construct orthogonal samples, at each iteration we were conducting orthogonalization of random Gaussian matrices with entries taken independently at random from $\mathcal{N}(0,1)$ via Gram-Schmidt procedure (see \cite{choro2}). Instead of the orthogonalization of Gaussian matrices, we could take advantage of constructions, where orthogonality is embedded into the structure (such as random Hadamard matrices from \cite{choro2}), introducing extra bias but proven to work well in practice. However in all conducted experiments that was not necessary.

For each environment and each method we run $k=3$ experiments corresponding to different random seeds.


\end{document}